\newtheorem{theorem}{Theorem}[section]
\newtheorem{lemma}[theorem]{Lemma} 
\newtheorem{remark}[theorem]{Remark} 
\newtheorem{fact}[theorem]{Fact} 
\newtheorem{definition}[theorem]{Definition} 
\newtheorem{corollary}[theorem]{Corollary} 
\newtheorem{example}[theorem]{Example}
\def\da{\downarrow}
\def\nda{\not\downarrow}
\def\A{\mathfrak{A}}
\def\B{\mathfrak{B}}
\def\C{\mathfrak{C}}
\def\D{\mathfrak{D}}
\def\K{\mathcal{K}}
\def\M{\mathbb{M}}
\def\E{\mathcal{E}}
\def\a{\alpha}
\def\o{\omega}
\def\k{\kappa}
\def\raj{\upharpoonright}
\def\v{\vert}
\title{An AEC framework for fields with commuting automorphisms}
\author{Tapani Hyttinen and Kaisa Kangas}
\thanks{Research of the second author was supported by grant 310737 of the Academy of Finland}                                    % Activate to display a given date or no date
\begin{document}
\maketitle
%\section{}
%\subsection{}

\begin{abstract}
In this paper, we introduce an AEC framework for studying fields with commuting automorphisms.
Fields with commuting automorphisms are closely related to difference fields.
Some authors define a difference ring (or field) as a ring (or field) together with several commuting endomorphisms, while others only study one endomorphism.
Z. Chatzidakis and E. Hrushovski have studied in depth the model theory of ACFA,
the model companion of difference fields with one automorphism.
Our fields with commuting automorphisms generalize this setting.
We have several automorphisms and they are required to commute. 
Hrushovski has proved that in the case of fields with two or more commuting automorphisms,
the existentially closed models do not necessarily form a first order model class.
In the present paper, we introduce FCA-classes, an AEC framework for studying the existentially closed models of the theory of fields with commuting automorphisms.
We prove that an FCA-class has AP and JEP and thus a monster model,
that Galois types coincide with existential types in existentially closed models,
that the class is homogeneous,
and that there is a version of type amalgamation theorem that allows to combine three types under certain conditions.
Finally, we use these results to show that our monster model is a simple homogeneous structure in the sense of S. Buechler and O. Lessman 
(this is a non-elementary analogue for the classification theoretic notion of a simple first order theory). 
\end{abstract}

\tableofcontents

\section{Introduction} 

\noindent
A \emph{field with commuting automorphisms} is a field $(K,+, \cdot)$
together with distinguished field automorphisms $\sigma_1, \ldots, \sigma_n$
that are required to commute, i.e. $\sigma_i \sigma_j=\sigma_j \sigma_i$
for all $i,j=1, \ldots, n$.
Fields with commuting automorphisms arise as a generalisation of fields with one distinguished automorphism.
A \emph{difference field} is sometimes defined to be a field with one distinguished automorphism (see e.g. \cite{cohn2}) but some authors (e.g. \cite{levin}) employ the term to refer to a field with several commuting automorphisms.
A. Macintyre showed in \cite{mac} that the class of existentially closed difference fields with only one automorphism is first-order axiomatizable,
i.e. that the theory of difference fields with one automorphism has a model companion.
Z. Chatzidakis and E. Hrushovski call this model companion ACFA.
They studied its model theory in depth in \cite{ChHr},
and continued this effort together with Y. Peterzil in \cite{CHP}.
Hrushovski used the results on difference fields in his model theoretic proof for the Manin-Mumford conjecture, a statement in arithmetic geometry \cite{manin}.
Moreover, Chatzidakis and Hrushovski have used model theory of difference fields to study algebraic dynamics in \cite{dyn1, dyn2}.
These results highlight the potential for applications in this line of research.
 
In a difference field, there is a geometry of difference varieties where zero sets of difference polynomials generate the closed sets in a Noetherian topology that resembles the Zariski topology on an algebraically closed field (see e.g. \cite{cohn2, levin}).
Here, the existentially closed difference fields play a similar role as algebraically closed fields in algebraic geometry.
In \cite{ChHr}, Chatzidakis and Hrushovski described the dimension theory for ACFA,
including a decomposition into one-dimensional definable sets.
They classified the possible combinatorial geometries underlying the one-dimensional sets and proved that Zilber's Trichotomy holds in characteristic 0.
In \cite{CHP}, they presented a new proof for the trichotomy result that also applies in positive characteristic.
  
ACFA falls in the class of simple unstable theories that was identified by S. Shelah \cite{sh1, sh2}.
In this context, non-forking has all the usual properties,
but unlike in stable theories, stationarity fails.
A good substitute can be found in the Independence Theorem,
which can be used to combine types and allows one to generalise the notions of generic type of a group and of stabilisers of types to groups definable in models of ACFA.
B. Kim and A. Pillay showed in \cite{KP} that a first order theory is simple if and only if it has a notion of independence that has the usual properties of non-forking and satisfies the Independence Theorem.

In \cite{ChHr}, Chatzidakis and Hrushovski define an independence notion in models of ACFA by letting,  for any set $A$, the model $acl_\sigma(A)$
be the field that is obtained by closing the field generated by the set $A$ with respect to the distinguished automorphism and its inverse, and then taking the (field theoretic) algebraic closure.
They then define $A$ to be independent from $B$ over $C$ if $acl_\sigma(AC)$ 
is algebraically independent from $acl_\sigma(BC)$ over $acl_\sigma(C)$.
This notion inherits the usual properties of non-forking from algebraic independence in fields,
and Chatzidakis and Hrushovski show that it satisfies a more general version of the independence theorem (Generalised Independence Theorem) that allows them to combine any finite number of types simultaneously.
This implies, by \cite{KP}, that the theory of ACFA is simple and the independence relation coincides with the usual notion of non-forking.
 
A natural way to generalise a difference field is to add more distinguished automorphisms,
and in \cite{manin}, Hrushovski works at times in this more general setting.
However, there the geometries become quite wild,
and the topology obtained from zero sets of difference polynomials is no longer Noetherian.
Moreover, the strongest results from \cite{ChHr} do not apply.
Thus, it makes sense to pose some restrictions to the group of distinguished automorphisms in order to get a more well-behaved model class.
A natural idea is to require the automorphisms to commute.
However, here one runs into the problem that the existentially closed models of the theory of fields with several commuting automorphisms do not form a first order model class.
Hrushovski has come up with a counterexample already in the case of two automorphisms  (the proof can be found from \cite{kikyo}).

However, the existentially closed models of the theory of fields with commuting automorphisms \emph{do} form an abstract elementary class (AEC),
and in many instances, geometric stability theory can be developed for AECs.
In the present paper, we introduce \emph{FCA-classes}, an AEC framework for studying fields with commuting automorphisms.
The main difference to difference fields is that existentially closed models of the theory of fields with commuting automorphisms are not necessarily algebraically closed as fields (see Example \ref{quaternion}).
We solve this problem by taking an FCA-class to consist of the relatively algebraically closed models of the theory of fields with commuting automorphisms (see Definition \ref{relalg}).
This class will contain all the existentially closed models, and in particular, it will have an existentially closed monster model.

An FCA-class will have the amalgamation property (AP) (Lemma \ref{AP}) and joint embedding property (JEP) (Lemma \ref{JEP}),
and thus we can work in a $\kappa$-universal and $\kappa$ -model homogeneous monster model for an arbitrary large cardinal $\kappa$.  
Moreover, the class will be homogeneous, in existentially closed models, existential types will coincide with Galois types, and we get a first order characterisation for Galois types in all models (Lemma \ref{etyypitmaar}). 
Following the lines of \cite{ChHr}, we define an independence notion that is based on algebraic independence in fields
(Definition \ref{independence}).
We then show that it has the properties of non-forking that one would expect in a simple unstable setting (Lemma \ref{nonforkprop}).

Moreover, we prove a version of the independence theorem that allows us to combine three types if they satisfy certain conditions (Theorem \ref{ind}).  
One of the main differences to \cite{ChHr}
is that we need to require that one of these types has certain technical qualities which we call \emph{nice} (see Definition \ref{nice}).  
We show that any type has a free extension that is nice (Lemma \ref{laajenee}).
Thus, when examining a specific type -- which is often the case in geometric stability theory --
we can always replace it with a nice one by extending the base.
This is in  line with other results on stability theory in non-elementary settings where it is not usually possible to consider types over arbitrary sets but they need to be extended to ``rich" enough models. 

Simplicity is often a minimum requirement for geometric stability theory. 
S. Buechler and O. Lessman introduced a notion of simplicity in one non-elementary context,
that of strongly homogeneous structures \cite{bl}.
A monster model for an FCA-class is strongly homogeneous in the sense of \cite{bl},
and we will prove that it is also simple in their sense (Corollary \ref{SIMPLE}).
Buechler and Lessman show that a simple strongly homogeneous model satisfies a version of the Independence Theorem which they call the type amalgamation theorem (Theorem 3.8 in \cite{bl}).
Adapted to our context, it states that if $c$, $b_i$, and $a_i$ are finite tuples such that
\begin{enumerate}[(1)]
\item $b_1$ is independent from $b_2$ over $c$,
\item $a_1$ and $a_2$ have the same Lascar strong type over $c$ (i.e. they are equivalent in every $c$-invariant equivalence relation that has only boundedly many equivalence classes),
\item $a_i$ is independent from $b_i$ over $c$, 
\end{enumerate} 

then there is some $a$ that realizes the Lascar strong type of $a_i$ over $cb_i$ for $i=1,2$ 
and is independent from $b_1b_2$ over $c$.
This will, then, also be true of a monster model of an FCA-class.  
It follows that we will be able to use our independence theorem (Theorem \ref{ind}) without the niceness assumption if we know that one of the types $p_{ij}(x_i,x_j) \in S(\A)$ implies that $x_i$ and $x_j$ have the same Lascar type over $\A$.

The present paper is a beginning of the stability theoretic study of fields with commuting automorphisms in an AEC framework.
In the future, we aim to investigate whether some of the results that hold in ACFA could be generalised to our context.
    
This paper is structured as follows.
In section 2, we take a detour to a more general framework of multiuniversal AECs and study existential types there. We show that under certain conditions,
there is a first order characterisation for Galois types, existential types determine Galois types, and the class is homogeneous. 
Moreover, under additional assumptions of AP and JEP, 
first order types will coincide with Galois types in existentially closed models.

In section 3, we turn our attention to fields with commuting automorphisms,
introduce our AEC framework, prove some of its basic properties and point out that it is a multiuniversal AEC satisfying the requirements given in section 2.
It then follows that in existentially closed models, Galois types are the same as existential types and the class is homogeneous.
In section 4, we present our independence notion, and show that it has the usual properties of non-forking and prove our version of the independence theorem. 
Finally, in section 5, we use these results to show that the class is simple in the sense of \cite{bl}. 
  
We denote the field theoretic algebraic closure of $K$ by $K^{alg}$.
Moreover, we follow the usual model theoretic convention and write $AB$ for $A \cup B$
and $a \in A$ to denote that $a$ is a tuple of elements from $A$. 
Most of the time we use this notation for finite tuples,
and when we deal with infinite tuples, we mention it specifically. 

\section{A remark on existential types in multiuniversal classes}

\noindent  
In this section, we work in a more general framework than in the rest of the paper and take a look at existential types in multiuniversal classes.
In this context, a closure operation can be defined inside a model by taking the smallest strong submodel that contains a given set (see Definition \ref{multiuni}).
We will show that if there exists a collection of quantifier free formulae $\E$ which determines the closure and satisfies some additional requirements,
then Galois types coincide with types that are determined by a collection $\E^+$ of first order formulae (Lemma \ref{etyypit}).  
Moreover, if the class has AP and JEP, then it is homogeneous (Corollary \ref{egalois}) and
in existentially closed models, Galois types will be the same as existential types  (Corollary \ref{multiremark}).
The abstract elementary class that we present in the next section as a framework for studying fields with commuting automorphisms will satisfy these assumptions.
 A reader who so wishes can skip this section and go straight to the next one where we start working in the more specific setting of fields with commuting automorphisms.

In \cite{multi} (Theorem 3.3), it is proved that in multiuniversal classes, 
Galois types of infinite sequences are determined by the Galois types of finite subsequences,
and this implies that a multiuniversal class with AP and JEP is homogeneous. 
However, our result (Lemma \ref{etyypit}) will imply Theorem 3.3. in \cite{multi},
as we will point out in Remark \ref{multire}.
In \cite{pi} (Lemma 2.10),
it is shown that in a suitable subclass of an essentially $\forall$-definable class,
existential types coincide with automorphism types in sufficiently rich models.
However, automorphism types do not necessarily imply Galois types unless the class has AP and JEP. 
In Example \ref{gnote},
we will present a multiuniversal and $\forall$-definable AEC where amalgamation fails and Galois types do not coincide with existential types even in existentially closed models.

In the setting of \cite{pi},
Galois types are the same as existential types if the class has AP and JEP and there are arbitrarily large rich structures, but it is not evident that such structures can be always found. 
In contrast to this, our framework gives a characterisation for Galois types in all models. 
Moreover, we get both homogeneity and the characterisation of Galois types as result of the same proof.
Furthermore, \cite{pi} implicitly makes some cardinal assumptions (see the discussion on p. 4 there, just before Definition 2.7).
It is possible to get rid of those assumptions by modifying the proof, but it requires some extra effort.

We now recall some basic notions related to AECs.
We will use the terms \emph{model} and \emph{structure} interchangeably.
For basic terminology in model theory, see \cite{marker}.

\begin{definition} 
Let $\mathcal{L}$ be a  countable language, let 
$\K$ be a class of $\mathcal{L}$-structures and let $\preccurlyeq$ be a binary relation on $\K$. 
We say $(\K, \preccurlyeq)$ is an abstract elementary class (AEC for short) if the following hold.
\begin{enumerate}[(1)]
\item Both $\K$ and $\preccurlyeq$ are closed under isomorphisms.
\item If $\A, \B \in \K$ and $\A \preccurlyeq \B$, then $\A \subseteq \B$.
\item The relation $\preccurlyeq$ is a partial order on $\K$. 
\item If $\delta$ is a cardinal and $(\A_i \, | \, i<\delta)$
is a $\preccurlyeq$-increasing chain of structures in $\K$, then
\begin{enumerate}[a)]
\item $\bigcup_{i<\delta} \A_i \in \K$;
\item for each $j<\delta$, $\A_j \preccurlyeq \bigcup_{i<\delta} \A_i$;
\item if $\B \in \K$ and for each $i<\delta$, $\A_i \preccurlyeq \B$, 
then $\bigcup_{i<\delta} \A_i \preccurlyeq \B$.
\end{enumerate}
\item If $\A, \B, \C \in \K$, $\A \preccurlyeq \C$, $\B \preccurlyeq \C$ and $\A \subseteq \B$,
then $\A \preccurlyeq \B$.
\item There is a L\"owenheim-Skolem number $LS(\K)$ such that if 
$\A \in \K$ and $\B \subseteq \A$,
then there is some structure $\A' \in \K$
such that $\B \subseteq \A' \preccurlyeq \A$ and 
$\vert \A' \vert = \vert \B \vert + LS(\K)$.
\end{enumerate}

If $\A \preccurlyeq \B$, we say $\A$ is a \emph{strong substructure} of $\B$. 
\end{definition}

\begin{definition}
Let $(\K, \preccurlyeq)$ be an AEC, and let $\A, \B \in \K$.
We say a map $f: \A \to \B$ is a \emph{strong embedding} if $f: \A \to f(\A)$
is an isomorphism and $f(\A) \preccurlyeq \B$. 
\end{definition}

\begin{definition}
Let $(\K, \preccurlyeq)$ be an AEC, and let
$\k$ be a cardinal.
A model $\M \in \K$ is \emph{$\k$-universal} if for every $\A \in \K$
such that $\vert \A \vert <\k$,
there is a strong embedding $f: \A \to \M$.
\end{definition}

\begin{definition}\label{modhom}
Let $\k$ be a cardinal.
A model $\M \in \K$ is \emph{$\k$- model homogeneous} if whenever $\A, \B \preccurlyeq \M$
are such that $\vert \A \vert, \vert \B \vert < \k$ and $f: \A \to \B$
is an isomorphism, then $f$ extends to an automorphism of $\M$.
\end{definition}

\begin{definition}\label{ap}
We say an AEC $\K$ has the \emph{amalgamation property}
if for any $\A, \B, \C \in  \K$ such that there are strong embeddings $f: \C \to \A$
and $f': \C \to \B$, 
there is some $\D \in \K$ and strong embeddings $g: \A \to \D$
and $g': \B \to \D$ such that $g \circ f =g' \circ f'$.

Moreover, if $\D$ and the embeddings $g$ and $g'$
can be chosen so that $g(\A) \cap g'(\B)=g(f(\C))$,
then we say $\K$ has the \emph{disjoint amalgamation property}. 
\end{definition}

\begin{definition}\label{jep}
We say an AEC $\K$ has the \emph{joint embedding property}
if for any $\A, \B \in \K$, there is some $\C \in \K$
and strong embeddings $f: \A \to \C$, $g: \B \to \C$.
\end{definition}

It is well known that if $(\K, \preccurlyeq)$ is an AEC with AP and JEP, then it contains, 
for each cardinal $\k$,
a $\k$-universal and $\k$- model homogeneous model (see e.g. \cite{baldwin}, Exercise 8.6).
In a context like that,
it is practical to work inside such a model  for some large $\k$,
and we call it a \emph{monster model} for $\K$.
This will be the case with the AEC framework that we will present in the next section. 
If $\A, \B \in \K$ and $\A \preccurlyeq \B$, we say $\A$ is a \emph{strong submodel} of $\B$.
We now recall the notion of Galois types.  
If a monster model exists, then Galois types will become orbits of automorphisms of the monster.

Since our goal is to give an alternative proof to a result from \cite{multi},
we want to give the exact same definition for Galois types that they use (Definition 2.16 in \cite{vasey}).
Note that the below definition requires $A \subseteq \C$. 
 
\begin{definition}\label{Eat}
Let $\K$ be an AEC, and let $\K_3$ be the set of triplets $(\bar{a}, A, \A)$,
where $\A \in \K$, $A \subseteq \A$, and $\bar{a}$ is a (possibly infinite) sequence of elements from $\A$.
We define Galois types as follows.
\begin{itemize}
\item If $(\bar{a}, A, \A), (\bar{b}, B, \B) \in \K_3$, we define the relation $E_{at}$ so that $(\bar{a}, A, \A)E_{at}(\bar{b}, B, \B)$ if $A=B$, and there exist some $\C \in \K$ 
and strong embeddings $f_1: \A \to \C$ and $f_2: \B \to \C$ such that $f_1 \raj A= f_2 \raj A=id$ and $f_1(\bar{a})=f_2(\bar{b})$.
\item We let $E$ be the transitive closure of $E_{at}$ (note that $E$ is an equivalence relation).
\item For $(\bar{a}, A, \A) \in \K_3$, we let the \emph{Galois type} of $\bar{a}$ over $A$ in $\A$,
denoted $tp^g(\bar{a}/A; \A)$ to be the $E$- equivalence class of $(\bar{a}, A, \A)$.
\end{itemize}
 
\end{definition}

If the model $\A$ is clear from the context (e.g. when we will be working in a universal, model homogeneous monster model), we will write just $tp^g(\bar{a}/A)$ for $tp^g(\bar{a}/A; \A)$.

\begin{definition}
Let $\Phi$ be a collection of first order formulae, 
let $\A$ be a model, let $A \subseteq \A$,
and let $b \in A$ be a finite tuple.
We define the $\Phi$-type of $b$ over $A$ in $\A$,
denoted $tp_\Phi^\A(b/A)$,
to be the set of all formulae $\phi(x,a)$ where either $\phi \in \Phi$
or $\neg \phi \in \Phi$, and
$a \in A$ is a finite tuple,
such that $\A \models \phi(b,a)$.
\end{definition}
 
\begin{definition}\label{etyypmaar}
Let $\Phi$ be a collection of first order formulae.
We define a $\Phi$-$n$-type over a set $A$ to be a collection $\Sigma$
of formulae $\phi(x_1, \ldots, x_n, a)$, where $\phi$ or $\neg \phi$ is in $\Phi$,
and $a$ is a tuple from $A$,  
such that for some model $\A$ containing $A$ and some $n$-tuple $b \in \A$,
$\Sigma=tp_\Phi^\A(b/A)$.
 
If $\Phi$ is taken to be the collection of all formulae of the form $\exists x_1 \cdots \exists x_n \phi(x_1, \ldots, x_n, \bar{y})$, where $\phi$ is quantifier free, then we call $\Phi$-types \emph{existential types}
and use $tp_\exists^\A(a/A)$ to denote the existential type of $a$ over $A$ in $\A$.
\end{definition}
 
We now recall the definition of a \emph{multiuniversal class}, introduced in \cite{multi} (Definition 2.8).

\begin{definition}\label{multiuni}
Suppose $(\K, \preccurlyeq)$ is an AEC.
For any $A \subseteq \A \in \K$,
we define 
$$cl_\A(A)= \bigcap\{\B \, | \, \B \preccurlyeq \A \textrm{ and } A \subseteq \B\}.$$
\vspace{0.2cm}
We say that $\K$ is a \emph{multiuniversal class} if the following hold:
\begin{enumerate}[(i)]
\item For all $\A \in \K$ and $A \subseteq \A$, it holds that $cl_\A(A) \in \K$ and
$cl_\A(A) \preccurlyeq \A$;
\item If $\A \in \K$, $A \subseteq \A$, and $a \in cl_\A(A)$,
then $tp^g(a/A; \A)$ is algebraic (i.e. has only finitely many realisations in $\A$).
\end{enumerate}
\end{definition}

\begin{lemma}\label{closisom}
Let $\K$ be a multiuniversal AEC, $\A, \B \in \K$,
and let $\bar{a} \in \A$, $\bar{b} \in \B$ be possibly infinite sequences such that $tp^g(\bar{a}/\emptyset; \A)=tp^g(\bar{b}/\emptyset; \B)$.
Then, there is an isomorphism $f: cl_\A(\bar{a}) \cong cl_\B(\bar{b})$ such that $f(\bar{a})=\bar{b}$.
\end{lemma}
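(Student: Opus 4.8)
The plan is to build the desired isomorphism as the union of a chain of partial isomorphisms, using the hypothesis on Galois types at each finite stage. First I would recall that, by definition of multiuniversality, $cl_\A(\bar a) \preccurlyeq \A$ and $cl_\B(\bar b) \preccurlyeq \B$, and both are members of $\K$; moreover by condition (ii) every element of $cl_\A(\bar a)$ realises an algebraic Galois type over $\bar a$. The key bookkeeping device: for a finite tuple $\bar c \in cl_\A(\bar a)$, the value $cl_\A(\bar a \bar c) = cl_\A(\bar a)$ (since $cl_\A(\bar a)$ is already a strong submodel containing $\bar a \bar c$), and similarly on the $\B$ side; so "closing up" never takes us outside the two models we care about.

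Next I would set up the back-and-forth. Enumerate $cl_\A(\bar a)$ as $(d_i \mid i < \delta)$ and $cl_\B(\bar b)$ as $(e_j \mid j < \delta)$ for a suitable ordinal $\delta$. I would construct an increasing chain of finite (or short) partial maps $f_s$, starting from $f_0$ defined by $\bar a \mapsto \bar b$. The invariant to maintain is: if $f_s$ sends a finite tuple $\bar a\bar c$ to $\bar b\bar c'$, then $tp^g(\bar a\bar c/\emptyset;\A) = tp^g(\bar b\bar c'/\emptyset;\B)$. To extend $f_s$ to include a new element $d = d_i$ of $cl_\A(\bar a)$: the Galois type $tp^g(\bar a\bar c d/\emptyset;\A)$, restricted to $\bar a\bar c$-equivalent data, determines (via $E_{at}$ and amalgamation along the current hypothesis) a realisation on the $\B$ side. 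Concretely, since $tp^g(\bar a\bar c/\emptyset;\A) = tp^g(\bar b\bar c'/\emptyset;\B)$, there is $\C \in \K$ with strong embeddings $g_1 : \A \to \C$, $g_2 : \B \to \C$ agreeing on $\bar a\bar c \mapsto \bar b\bar c'$ (after chasing through the transitive closure $E$ — here one inserts intermediate structures as in Definition \ref{Eat}); then $g_1(d) \in \C$, and since $g_2(cl_\B(\bar b)) = cl_{\C}(g_2(\bar b)) = cl_{\C}(g_1(\bar a)) \preccurlyeq \C$ contains $g_1(\bar a)$, hence contains $cl_{\C}(g_1(\bar a)) \supseteq g_1(cl_\A(\bar a)) \ni g_1(d)$, we get $g_1(d) = g_2(e)$ for a unique $e \in cl_\B(\bar b)$. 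Set $f_{s+1}(d) = e$; the invariant is preserved because both tuples map to the same tuple in $\C$. The "forth" step is symmetric. At limit stages take unions, using that a union of partial isomorphisms preserving Galois types preserves Galois types of finite tuples (each such type lives at some earlier stage). Finally $f = \bigcup_s f_s$ is a bijection $cl_\A(\bar a) \to cl_\B(\bar b)$; it is a homomorphism in both directions because each atomic $\mathcal{L}$-formula is (up to negation) part of a Galois type and our invariant controls those on all finite tuples, so $f$ is an isomorphism with $f(\bar a) = \bar b$.

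The main obstacle I anticipate is handling the transitive closure in the definition of Galois type: $E_{at}$ gives a single amalgam only when the pair is $E_{at}$-related, but in general one only has an $E$-chain. Since the paper's hypothesis is about $E$-equivalence and no global AP has yet been assumed at this point in the text, the extension step must be carried out chain-link by chain-link — at each link one amalgamates and transports the new element $d$ along, checking it lands inside the relevant closure using condition (i) of Definition \ref{multiuni} and the idempotence remark above. A secondary point requiring care is that the element $e \in cl_\B(\bar b)$ produced is genuinely unique, which follows from condition (ii): $tp^g$ of $d$ over $\bar a\bar c$ is algebraic, and the strong embeddings preserve the (finite) realisation set, so no ambiguity arises in matching realisations across the amalgam. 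Once these are in place, the verification that $f$ respects function symbols, relation symbols and constants is the routine observation that all of these are quantifier-free and hence governed by the Galois-type invariant.
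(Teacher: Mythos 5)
Your proof is correct, and the essential insight is the same one the paper uses: a strong embedding $g:\A\to\C$ carries $cl_\A(\bar a)$ exactly onto $cl_\C(g(\bar a))$ (you state this as one inclusion, $cl_\C(g_1(\bar a))\supseteq g_1(cl_\A(\bar a))$, but it is in fact an equality, by the usual two-sided smallest-strong-submodel argument using coherence). However, you bury this key fact inside a back-and-forth construction that does no work. Once you know $g_1(cl_\A(\bar a))=cl_\C(g_1(\bar a))=cl_\C(g_2(\bar b))=g_2(cl_\B(\bar b))$ for an $E_{at}$-amalgam, the map $g_2^{-1}\circ g_1$ restricted to $cl_\A(\bar a)$ is \emph{already} an isomorphism onto $cl_\B(\bar b)$ sending $\bar a$ to $\bar b$; you do not need to transport elements one at a time, maintain a Galois-type invariant on finite tuples, or worry about limit stages. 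The paper's proof is exactly this: reduce to the single $E_{at}$-step (composing the resulting isomorphisms along the $E$-chain handles the transitive closure — which is also cleaner than your ``chain-link by chain-link'' transport of a single element $d$, since one never needs to track where $d$ goes, only the whole closures), then read off $f_2^{-1}\circ f_1$. Your version arrives at the right answer but takes a long detour; when writing it up, strip out the back-and-forth scaffolding and lead with the closure-preservation equality, which is the whole content of the lemma.
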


\begin{proof}
It suffices to prove the lemma in case $(\bar{a}, \emptyset, \A)E_{at}(\bar{b}, \emptyset, \B)$.
Then, there is some $\C \in \K$ and strong embeddings $f_1: \A \to \C$ and $f_2: \B \to \C$
such that $\bar{c}=f_1(\bar{a})=f_2(\bar{b}).$
Since $cl_\A(\bar{a})$ is the smallest strong submodel of $\A$ containing $\bar{a}$,
we have $f_1(cl_\A(\bar{a}))=cl_\C(\bar{c})$ by transitivity of $\preccurlyeq$.
Likewise, $f_2(cl_\B(\bar{b}))=cl_\C(\bar{c})$, 
and $f_2^{-1} \circ f_1$ gives the desired isomorphism.
\end{proof}

Next, we illustrate with an example that if amalgamation fails in a
$\forall$-definable multiuniversal AEC, then Galois types do not necessarily coincide with existential types even in existentially closed models.

\begin{definition}
Let $(\K, \preccurlyeq)$ be an AEC. 
We say a model $\A \in \K$ is \emph{existentially closed} if the following condition holds:
if $\B \in \K$, $\A \preccurlyeq \B$, $a \in \A$, $\phi$ is a quantifier free formula, and
$\B \models \exists x_1 \cdots \exists x_n \phi(x_1, \ldots, x_n,a)$,  
then $\A \models \exists x_1 \cdots \exists x_n \phi(x_1, \ldots, x_n,a)$.
\end{definition}

\begin{example}\label{gnote}
Let $L=\{P, Q, R, c\}$,
where $P$ is a unary predicate, $R$ and $Q$ are binary predicates, 
and $c$ is a constant symbol.
Let $T$ be a theory that states the following:
\begin{itemize}
\item $c \notin P$;
\item If $(x,y) \in R$, then $x \in P$, $y \notin P$, and $y \neq c$;
\item If $(x,y) \in Q$, then $x, y \notin P$, $x \neq y$, $x \neq c$, and  $y \neq c$;
\item For any $x \in P$, there are at most two $y$ such that $R(x,y)$;
\item If $x, y \in P$ and $x \neq y$, then there is no $z$ such that $R(x,z) \wedge R(y,z)$.
\end{itemize}
Let $(\K, \subseteq)$ be the AEC that consists of models of $T$, 
equipped with the submodel relation.
The theory $T$ is clearly $\forall$-axiomatizable,
and $\K$ is a multiuniversal class with $cl_\A(A)=A \cup \{c^\A\}$. 

The class $\K$ does not have amalgamation.
Indeed, let $\A \in \K$ be a model and let $a \in P^\A$
be such that there is no $b \in \A$ with $(a,b) \in R^\A$.
Then, there are $\B, \C \in \K$ and $b_1, b_2 \in \B$, $c_1, c_2 \in \C$ such that $\A \subseteq \B$, $\A \subseteq \C$, $(a, b_1), (a, b_2) \in R^\B$, $(b_1, b_2) \in Q^\B$,
$(a, c_1), (a, c_2) \in R^\C$ and $(c_1,c_2) \notin Q^\C$.
Now, $\B$ and $\C$ cannot be amalgamated over $\A$.

In $\K$, Galois types are not the same as existential types,
not even in existentially closed models.
Indeed, let $\A \in \K$ be existentially closed, and suppose $a_1, a_2, a_3, b_1, b_2, b_3 \in \A$
are such that  $(a_1, a_2), (a_1,a_3), (b_1,b_2), (b_1,b_3) \in R^\A$, 
$(a_2, a_3) \in Q^\A$, and $(b_2, b_3) \notin Q^\A$.
Now, $tp_\exists^\A(a_1/\emptyset) \neq tp_\exists^\A(b_1/\emptyset)$
since $\A \models \exists x \exists y (R(a_1, x) \wedge R(a_1,y) \wedge Q(x,y))$
and $\A \not\models  \exists x \exists y (R(b_1, x) \wedge R(b_1, y) \wedge Q(x,y))$.
On the other hand,
$tp^g(a_1/\emptyset; \A)=tp^g(b_1/\emptyset; \A)$.
Indeed, let $\B \in \K$ be such that $\B=\{c^\B, d\}$, where $P^\B=\{d\}$.
There are strong embeddings $f: \B \to \A$ and $g: \B\to \A$
with $f(c^\B)=g(c^\B)=c^\A$, $f(d)=a_1$ and $g(d)=b_1$,
so $(a_1, \emptyset, \A) E_{at} (d, \emptyset, \B)$,
and $(d, \emptyset, \B) E_{at} (b_2, \emptyset, \A)$.
\end{example}

We will prove that existential types imply Galois types in the case that the closure operation from Definition \ref{multiuni} is obtained from a collection $\E$ of quantifier free first order formulae that satisfy certain requirements,
as explained in the following definition.
 
 \begin{definition}\label{eclos}
Let $\mathcal{E}$ be a collection of quantifier free formulae of the form $\phi(x, \bar{y})$,
where $x$ is a single variable and $\bar{y}$ a tuple of variables,
such that whenever $\A \in \K$ and $\bar{b} \in \A$, then $\phi(\A, \bar{b})$ is finite.
Moreover, suppose the formula $x=y$ is in $\E$.

If $\A \in \K$ and $A \subseteq \A$, 
we define the \emph{$\E$-closure} of $A$ in $\A$, denoted $\E$-$cl_\A(A)$,
to be the set of all elements $a \in \A$
such that there is a formula $\phi(x, \bar{y}) \in \E$ and a finite tuple $\bar{b} \in A$ such that
$\A \models \phi(a, \bar{b})$.
\end{definition}

In our main results, we will assume we are working in a multiuniversal class and $\E$-$cl$ equals the closure operator $cl$ from Definition \ref{multiuni}.

Note that $\E$-cl depends on $\E$, and the requirements for the collection $\E$ are rather strong: if $\phi(x, \bar{y}) \in \E$,
then the set $\phi(\A, \bar{b})$ must be finite for all $\A \in \K$ and $\bar{b} \in \A$.
Thus, for example, the formula $\neg x =y$ cannot be in $\E$ if $\K$ contains infinite models.

Next, we will define a collection $\E^+$ of first order formulae such that $\E \subseteq \E^+$.
We will eventually show that if we are working in a multiuniversal AEC and $\E$-closure equals the closure operation from Definition \ref{multiuni},
then Galois types will coincide with $\E^+$-types (see below).
After that, we will point out that existential types imply $\E^+$-types and thus Galois types. 

If $(\K, \preccurlyeq)$ is as in Example \ref{gnote},
and  we take $\E$ to consist of formulae of the form $x=y$
and $x=c$, then, 
for any $\A \in \K$ and $A \subseteq \A$, we have $cl_\A(A)=\E$-$cl_\A(A)$.

\begin{definition}\label{eclos2}
Let $x$ be a single variable and use $\exists^{=n}x\phi(x)$ as shorthand for the formula stating that there are exactly $n$
many $x$ such that $\phi(x)$, and define the collection of $\E^+$ to consist exactly of the formulae that we can construct recursively as follows (i.e. (ii) and (iii) can be iterated):
\begin{enumerate}[(i)]
%\item $\E \subseteq \E^+$;
\item If $\phi$ is an atomic formula or a negated atomic formula, then $\phi \in \E^+$;
\item If $\phi, \psi \in \E^+$, then $\phi \wedge \psi, \phi \vee \psi \in \E^+$;
\item If $m,n,k \in \o \setminus \{0\}$, $\phi_i(x, \bar{y}_i) \in \E^+$ for $i=1, \ldots, m$, and $\phi_1(x, \bar{y}_1) \in \E$,
then  $\exists^{=n} x \bigwedge_{i=1}^m \phi_i(x,\bar{y_i}) \wedge \exists^{=k} x\phi_1(x,\bar{y}_1) \in \E^+$. 
\end{enumerate}
\end{definition}

Note that $\E^+$ contains all quantifier free formulae, and thus $\E \subseteq \E^+$.
Note also that when we build the formulae using the recipe from the definition,
we are only allowed to quantify over the (single) variable $x$, and the variables $\bar{y}$ will remain free.
   
The motivation behind the collection $\E^+$ is to define a set of formulae such that in a multiuniversal AEC, under the assumption that $\E$-$cl$ equals the closure operator from Definition \ref{multiuni}, 
$tp_{\E^+}^\A(\bar{a}/\emptyset)$ defines the isomorphism type over the (possibly infinite) tuple $\bar{a} \in \A$ of the set which you get when you close $\bar{a}$ under the operation
$\E$-$cl_\A$ (i.e. if $tp_{\E^+}^\A(\bar{a}/\emptyset)=tp_{\E^+}^\A(\bar{b}/\emptyset)$, then $\E$-$cl_\A (\bar{a})$ will be isomorphic with $\E$-$cl_\A (\bar{b})$).
In the proof of Lemma \ref{etyypit} we will see that $\E$-$cl_\A$ actually works this way.
 
Note that $tp_{\E^+}^\A(\bar{a}/\emptyset)$ depends on the collection $\E$ since Definition \ref{eclos2}, (iii) requires that $\phi_1 \in \E$.
The collection $\E^+$ will not, generally, contain all existential formulae, but only those needed for the description.
Note also that $\E$-$cl$ will not generally be a closure operator but we are interested in cases where it coincides with the closure operatorfrom Definition \ref{multiuni}.

In Example \ref{gnote},
$\E^+$-types are equivalent to quantifier free types in a given $\A \in \K$ if we take, again, $\E$ to consist of formulae of the form $x=y$ and $x=c$.
Indeed, if $\psi(\bar{y})$ is a formula constructed as in (iii) of Definition \ref{eclos2}
and there is some $\A \in \K$ and $b \in \A$
such that $\A \models \psi(b)$,
then $\phi_1(x, y)$ must be of the form $x=y$ or $x=c$ and $n=k=1$.
It then follows inductively that all satisfiable formulae are equivalent to quantifier free formulae
(in the sense that for each formula $\phi$ that is satisfiable in some $\A \in \K$, 
there exists a quantifier free formula $\psi$ such that for any $\A \in \K$ and $a \in \A$,
$\A \models \phi(a)$ if and only if $\A \models \psi(a)$).

As another example, consider the structure $\mathfrak{M}=(\mathbb{Q}, R)$, where $R^\mathfrak{M}(x,y)$ if $y=x+1$.
If $A \subseteq \mathbb{Q}$, we define $cl(A)=\{x+z \, | \, x \in A, z \in \mathbb{Z}$\}.
Take $\E$ to consist of formulae of the form $x=y$ and $R(x,y)$.
Now, Definition \ref{eclos2} gives a way to construct recursively the formulae that prove $a \in cl(A)$.
For example, if $a, b \in \mathbb{Q}$, then we can write a formula $\phi(x,y) \in \E^+$ such that
$b=a+3$ if and only if $\mathfrak{M} \models \phi(a,b)$.
Indeed, let 
$$\phi(x,y):=\exists^{=1}z(R(x,z) \wedge \psi(y,z)) \wedge \exists^{=1}zR(x,z),$$
where
$$\psi(y,z):=\exists^{=1}w(R(z,w) \wedge R(w,y)) \wedge \exists^{=1}wR(z,w).$$

The way we defined $\E$ and $\E$-$cl$ in Definitions \ref{eclos} and \ref{eclos2} might not feel natural to some model theorists.
In the following remark, we present an alternative way to define $\E$ and $\E$-$cl$ which is more in line with the usual conventions in model theory.
However, we chose to use Definitions \ref{eclos} and \ref{eclos2} since we have found them more practical. 
Nevertheless, the same results can be obtained with the following alternative definition.

\begin{remark}\label{alternative}
In Definition \ref{eclos},
we required that if $\phi(x, \bar{y}) \in \E$, then
$\phi(\A, \bar{b})$ is finite for any $\A \in \K$ and $\bar{b} \in \A$.
This is a rather strong requirement since we require the set defined by $\phi(x, \bar{y})$ 
to be finite in \emph{all} models in $\K$.
However, we could define $\E$ and $\E$-$cl$ alternatively  as follows.
Let $\mathcal{E}$ be a collection of quantifier free formulae of the form $\phi(x, \bar{y})$,
where $x$ is a single variable and $\bar{y}$ a tuple of variables,
such that  the formula $x=y$ is in $\E$,
and if the formula $\phi(x, \bar{y})$ is in $\E$, then the formula $\psi(x, \bar{y},z):=``\phi(x, \bar{y}) \wedge \neg x=z"$
is in $\E$.
Moreover, we require that if $\A, \B \in \K$ and $\A \preccurlyeq \B$,
then $\A$ is $\E$-existentially closed in $\B$
(i.e. if $\phi(x, \bar{y}) \in \E$, $\bar{a} \in \A$ is a finite tuple, 
and $b \in \B$ is such that $\B \models \phi(b,\bar{a})$,
then there is some $c \in \A$ such that $\A \models \phi(c,\bar{a})$).

If $\A \in \K$ and $A \subseteq \A$, 
we define $\E$-$cl_\A(A)$
to be the set of all elements $a \in \A$
such that there is a formula $\phi(x, \bar{y}) \in \E$ and a finite tuple $\bar{b} \in A$ such that
$\A \models \phi(a, \bar{b})$ and $\vert \phi(\A, \bar{b}) \vert$ is finite.
Define then the collection $\E^+$ exactly as in Definition \ref{eclos2}.

Consider formulae that are as in Definition \ref{eclos2} (iii).
We will see that if we define $\E$ as in Definition \ref{eclos}, 
then we can actually omit the last conjunct ($ \exists^{=k} x\phi_1(x,\bar{y}_1)$) in those formulae
(it is not needed in the proofs in the rest of this section).
That is, we can assume that item (iii) in Definition \ref{eclos2} stands as follows:
\begin{enumerate}[(iii)]
\item If $m,n \in \o \setminus \{0\}$, $\phi_i(x, \bar{y}_i) \in \E^+$ for $i=1, \ldots, m$, and $\phi_1(x, \bar{y}_1) \in \E$,
then  $\exists^{=n} x \bigwedge_{i=1}^m \phi_i(x,\bar{y_i}) \in \E^+$. 
\end{enumerate}
However, with the alternative definition described above,  
we will need that last conjunct when we prove that Galois types imply $\E^+$-types
(see the proof of Lemma \ref{etyypit} and Remark \ref{toinenmaar}). 
\end{remark}

From now on, we will assume we are working in a multiuniversal AEC $(\K, \preccurlyeq)$
and that the collections $\E$ and $\E^+$ are as in Definitions \ref{eclos} and \ref{eclos2}.
We will repeat these assumptions in statements of theorems and key lemmas.
 
\begin{definition}
Let $\A \in \K$ and let $\phi(\bar{x})$ be a formula with parameters from $\A$
such that 
$n=\vert \phi(\A) \vert$ is finite.
Let $\mathcal{F}$ be a collection of formulae with free variables $\bar{x}$,
with parameters from $\A$.
If $b \in \A$ is a tuple, $\A \models \phi(b)$
and $\vert \psi(\A) \vert \ge n$
for all formulae $\psi(\bar{x}) \in \mathcal{F}$  with
$\A \models \psi(b)$,
then we say that  $b$ is a \emph{generic realisation} of $\phi(\bar{x})$ with respect to $\mathcal{F}$ in $\A$.

Unless mentioned otherwise, we will assume $\mathcal{F}$
consists of all the formulae from $\E^+$
with parameters from some set $A \subseteq \A$.
We then say $b$ is \emph{generic over $A$ in $\A$}.
\end{definition}    

Our proof that Galois types agree with $\E^+$-types if the two closures coincide will be based on the fact that for each singleton
$a \in \E$-$cl_\A(A)$,
there is some formula $\phi(y) \in \E^+$ with parameters from $A$ such that $a$ is a generic realisation of $\phi$ in $\A$
with respect to the collection of $\E^+$-formulae with parameters from $A$. 
The formula  $\phi$ then determines the $\E^+$-type of $a$ in $\A$.

\begin{lemma}\label{lisalemma}
Let $\A$ be a model, let $\phi(\bar{x})$ be a formula with parameters in $\A$, and let $\mathcal{F}$ 
be a set of formulae in the free variables $\bar{x}$ and with parameters in $\A$.
Assume that $\phi(\A)$ is finite and contains the tuple $\bar{b}$
and there is some formula $\theta(\bar{x}) \in \mathcal{F}$ such that $\A \models \theta(\bar{b})$.
Then there is a formula $\psi \in \mathcal{F}$ such that $\A \models \psi(\bar{b})$ 
and whenever $\theta(\bar{x})$ is satisfied by $\bar{b}$ in $\A$,
then $|(\phi \wedge \psi)(\A)| \le | (\phi \wedge \theta)(\A)|$.

In particular, if $\phi(x) \in \mathcal{E}$ is a formula with parameters in $A \subseteq \A$, and $b \in \phi(\A)$,
then there is an $\mathcal{L}(A)$-formula $\psi(x)$ in $\mathcal{E}^+$ such that $b$ is a generic of
$\phi \wedge \psi$ over $A$ in $\A$.
\end{lemma}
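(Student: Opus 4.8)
The plan is to first establish the general statement about $\phi$ and $\mathcal{F}$, and then derive the ``in particular'' clause as a special case. For the general claim, the key observation is that since $\phi(\A)$ is finite, the set of possible cardinalities $|(\phi \wedge \theta)(\A)|$, as $\theta$ ranges over those formulae in $\mathcal{F}$ satisfied by $\bar{b}$, is a nonempty (it contains $|(\phi \wedge \theta_0)(\A)|$ for the given $\theta_0$) set of natural numbers bounded by $|\phi(\A)|$. Hence it has a least element; let $\psi \in \mathcal{F}$ be any formula witnessing this minimum, so $\A \models \psi(\bar{b})$ and $|(\phi \wedge \psi)(\A)| \le |(\phi \wedge \theta)(\A)|$ for every $\theta \in \mathcal{F}$ with $\A \models \theta(\bar{b})$. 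That is exactly the asserted conclusion. This part is essentially a well-ordering argument and carries no real content beyond bookkeeping.

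For the ``in particular'' clause, I would take $\mathcal{F}$ to be the collection of all $\mathcal{L}(A)$-formulae in $\mathcal{E}^+$ in the single free variable $x$. First I must check the hypotheses of the general statement are met: $\phi(\A)$ is finite by the defining property of $\mathcal{E}$ in Definition \ref{eclos}; it contains $b$ by assumption; and there is some $\theta \in \mathcal{F}$ with $\A \models \theta(b)$ — indeed $\phi$ itself lies in $\mathcal{E} \subseteq \mathcal{E}^+$ and has parameters in $A$, so we may take $\theta = \phi$. Applying the general statement yields $\psi \in \mathcal{E}^+$ with parameters in $A$ such that $\A \models \psi(b)$ and $|(\phi \wedge \psi)(\A)| \le |(\phi \wedge \theta)(\A)|$ for all $\theta \in \mathcal{E}^+$ over $A$ satisfied by $b$. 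It remains to translate this minimality into the statement that $b$ is a generic realisation of $\phi \wedge \psi$ over $A$. By the definition of generic, I need that $|(\phi\wedge\psi)(\A)| \le |\chi(\A)|$ for every $\chi \in \mathcal{E}^+$ over $A$ with $\A \models \chi(b)$; but for such $\chi$, the formula $\theta := \phi \wedge \psi \wedge \chi$ is again in $\mathcal{E}^+$ over $A$ (conjunctions of $\mathcal{E}^+$-formulae are in $\mathcal{E}^+$), satisfied by $b$, and $(\phi \wedge \theta)(\A) = (\phi \wedge \psi)(\A) \cap \chi(\A) \subseteq \chi(\A)$, while by minimality $|(\phi\wedge\psi)(\A)| \le |(\phi\wedge\theta)(\A)| \le |(\phi\wedge\psi)(\A)|$, forcing $(\phi\wedge\psi)(\A) \subseteq \chi(\A)$ and hence $|(\phi\wedge\psi)(\A)| \le |\chi(\A)|$.

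The only point requiring a little care — and what I would flag as the main obstacle, though it is minor — is verifying the closure of $\mathcal{E}^+$ under the conjunctions used above: that if $\psi, \chi \in \mathcal{E}^+$ then $\phi \wedge \psi \wedge \chi \in \mathcal{E}^+$, keeping in mind that $\phi \in \mathcal{E} \subseteq \mathcal{E}^+$. This is immediate from the second clause of Definition \ref{eclos2}, which closes $\mathcal{E}^+$ under $\wedge$. One should also note that this generic $b$ realises $\phi \wedge \psi$ as required, i.e. $\A \models (\phi\wedge\psi)(b)$, which holds since $\A \models \phi(b)$ and $\A \models \psi(b)$. No use of the multiuniversality of $\K$ or of AP/JEP is needed here; the lemma is purely combinatorial, which is why it can serve as the technical engine for the subsequent proof that Galois types coincide with $\mathcal{E}^+$-types.
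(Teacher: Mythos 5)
Your proof is correct and takes the same approach as the paper, which simply observes in one sentence that the infimum of the relevant set of cardinalities is attained. One small simplification in your derivation of the ``in particular'' clause: you set $\theta := \phi \wedge \psi \wedge \chi$, but taking $\theta := \chi$ directly already gives $|(\phi\wedge\psi)(\A)| \le |(\phi\wedge\chi)(\A)| \le |\chi(\A)|$ without needing to run the sandwich argument forcing set equality.
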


\begin{proof}
This follows immediately from the fact that the infimum of
$$\{| (\phi \wedge \psi)(\A)| \, | \, \A \models \psi(\bar{b}), \psi \in \mathcal{F}\}$$
is attained for some $\psi \in \mathcal{F}$.

\end{proof}
   
\begin{lemma}\label{gensamat}
Suppose $\A$ and $\B$ are models,
$\bar{a} \in \A$, $\bar{b} \in \B$, are two possibly infinite sequences,
$\phi(x,y) \in \E^+$,
$c \in \E$-$cl_\A(\bar{a})$ is a generic realisation of $\phi(x,\bar{a})$ over $\bar{a}$ in $\A$,
and $tp_{\E^{+}}^\A(\bar{a}/\emptyset)=tp_{\E^{+}}^\B(\bar{b}/\emptyset)$.
Then, there is some $d \in \E$-$cl_\B(\bar{b})$
such that $tp_{\E^{+}}^\A(\bar{a},c/\emptyset)=tp_{\E^{+}}^\B(\bar{b},d/\emptyset)$.
\end{lemma}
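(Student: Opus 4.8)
The plan is to use the defining formula of $c$ as a generic to pin down a matching element $d$ on the $\B$ side, and then verify that the full $\E^+$-type transfers. First I would recall that by hypothesis $c \in \E\text{-}cl_\A(\bar a)$, so there is a formula $\phi_1(x,\bar y_1) \in \E$ and a finite tuple $\bar a_1 \subseteq \bar a$ with $\A \models \phi_1(c,\bar a_1)$; since $\phi_1 \in \E$, the set $\phi_1(\A,\bar a_1)$ is finite, say of size $k$. Because $c$ is generic over $\bar a$, for the formula $\chi(x):=\phi(x,a') \wedge \phi_1(x,\bar a_1)$ we have $\A \models \chi(c)$ and $|\chi(\A)| \geq k$ by genericity (as $\phi_1(x,\bar y_1)\in\E$ and $\chi \to \phi_1$, in fact $|\chi(\A)| = k$, but I only need $\geq$, and by genericity $\geq$ combined with $\chi \to \phi_1$ gives $= k$). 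Let $n = |\chi(\A)|$. Then the formula
$$\psi(\bar y):=\exists^{=n} x\Big(\phi(x,a') \wedge \phi_1(x,\bar a_1)\Big) \wedge \exists^{=k} x\,\phi_1(x,\bar a_1)$$
lies in $\E^+$ by the third clause of Definition \ref{eclos2} (with $m=2$, the relevant conjunct $\phi_1$ being in $\E$), and $\A \models \psi(\bar a_1 a')$.

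Since $tp_{\E^+}^\A(\bar a/\emptyset) = tp_{\E^+}^\B(\bar b/\emptyset)$ and $\psi$ is an $\E^+$-formula over $\emptyset$ satisfied by the corresponding subtuple of $\bar a$, the formula $\psi$ is also satisfied by the corresponding subtuple $\bar b_1 b'$ of $\bar b$ (here $\bar b_1, b'$ are the images under the correspondence of $\bar a_1, a'$). Thus $\B \models \exists^{=n} x(\phi(x,b') \wedge \phi_1(x,\bar b_1))$ and $\B \models \exists^{=k} x\,\phi_1(x,\bar b_1)$. Pick any $d \in \B$ with $\B \models \phi(d,b') \wedge \phi_1(d,\bar b_1)$. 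Since $\phi_1 \in \E$ and $|\phi_1(\B,\bar b_1)| = k$ is finite, we get $d \in \E\text{-}cl_\B(\bar b)$, and moreover $d$ is a generic realisation over $\bar b$ of $\phi(x,b') \wedge \phi_1(x,\bar b_1)$ in $\B$: any $\E^+$-formula $\theta(x)$ over $\bar b$ satisfied by $d$ pulls back (again using $tp_{\E^+}^\A(\bar a/\emptyset)=tp_{\E^+}^\B(\bar b/\emptyset)$, applied to the finite parameters involved) to an $\E^+$-formula satisfied by $c$ over $\bar a$, and the corresponding counting formulas match, so $|\theta(\B) \cap (\phi\wedge\phi_1)(\B)| = |\theta(\A) \cap (\phi\wedge\phi_1)(\A)| \geq n$ by genericity of $c$.

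It remains to check $tp_{\E^+}^\A(\bar a, c/\emptyset) = tp_{\E^+}^\B(\bar b, d/\emptyset)$. Let $\eta(\bar z, w) \in \E^+$ (or its negation in $\E^+$) with a finite subtuple of $\bar a$ plugged into $\bar z$ and $c$ into $w$; I want $\A \models \eta(\bar a', c)$ iff $\B \models \eta(\bar b', d)$, where $\bar b'$ corresponds to $\bar a'$. The key observation, which I expect to be the main obstacle, is that membership of $c$ in a generic fibre forces $\eta(\bar a', w)$ to hold of $c$ exactly when it holds of \emph{all} realisations of the generic-defining conjunction — more precisely, $\A \models \eta(\bar a', c)$ iff $|(\phi(w,a') \wedge \phi_1(w,\bar a_1) \wedge \eta(\bar a',w))(\A)| = n$, i.e. iff the counting $\E^+$-formula $\exists^{=n} w(\phi(w,a') \wedge \phi_1(w,\bar a_1)\wedge \eta(\bar a', w)) \wedge \exists^{=k}w\,\phi_1(w,\bar a_1)$ holds of the subtuple of $\bar a$; this formula is in $\E^+$, so it transfers to $\B$, and on the $\B$ side the same equivalence (using genericity of $d$) recovers $\B \models \eta(\bar b', d)$. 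The one subtlety is handling $\eta$ that itself contains quantifiers over a fresh variable distinct from $w$ — but by the recursive structure of Definition \ref{eclos2} the only bound variable allowed is the displayed one, so after substituting the parameters $\bar a'$ and treating $w$ as the free variable, $\eta(\bar a', w)$ is, up to the counting constructors, built from atomic formulae in $w$ and the parameters, and the argument goes through by induction on the construction of $\eta$. Assembling these pieces gives $tp_{\E^+}^\A(\bar a, c/\emptyset) = tp_{\E^+}^\B(\bar b, d/\emptyset)$, as required. $\qed$
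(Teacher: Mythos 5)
Your overall framing matches the paper: fix an $\E$-formula $\phi_1(x,\bar a_1)$ witnessing $c\in\E\text{-}cl_\A(\bar a)$, note that (by genericity of $c$) the set $(\phi\wedge\phi_1)(\A)$ equals $\phi(\A,a')$, transfer the relevant cardinalities to $\B$ via the counting formula, and finally use the ``generic realisations satisfy exactly the $\E^+$-formulae that hold on the whole fibre'' observation to conclude the type equality. The last paragraph of your proposal is essentially the paper's concluding step and is sound \emph{provided} $d$ is a generic realisation of $\phi(x,b')\wedge\phi_1(x,\bar b_1)$ over $\bar b$.

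The gap is precisely there: you write ``\emph{Pick any} $d\in\B$ with $\B\models\phi(d,b')\wedge\phi_1(d,\bar b_1)$'' and then assert that $d$ is automatically generic, on the grounds that any $\E^+$-formula $\theta$ satisfied by $d$ ``pulls back to an $\E^+$-formula satisfied by $c$.'' That pull-back claim is false for an arbitrary $d$. The hypothesis $tp^\A_{\E^+}(\bar a/\emptyset)=tp^\B_{\E^+}(\bar b/\emptyset)$ only equates formulae with the tuples $\bar a$, resp.\ $\bar b$, plugged in; it does not identify $c$ with $d$. From $\B\models\theta(d,b'')$ one only gets, via the counting formula, that \emph{some} element of $(\phi\wedge\phi_1)(\A)$ satisfies $\theta(x,a'')$, not that $c$ does. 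Concretely, the fibre $(\phi\wedge\phi_1)(\B,b')$ may contain both generic and non-generic elements (it can even be $\E^+$-definably split into $c$-like and non-$c$-like parts), so the generic counterpart of $c$ must be \emph{found}, not just picked. The paper does this by contradiction: if no element of the fibre in $\B$ were generic, each of the finitely many elements $d_1,\dots,d_n$ would have a witness $\chi_i$ with $|\chi_i(\B,b'')|<n$; the disjunction $\bigvee_i\chi_i$ then covers the whole fibre, which transfers to $\A$ by a counting $\E^+$-formula and forces $c$ to satisfy some $\chi_j$ with a small solution set, contradicting genericity of $c$. Your proof needs this argument (or an equivalent one) inserted before you choose $d$.

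A minor additional slip: you claim $|\chi(\A)|=k$ (with $k=|\phi_1(\A,\bar a_1)|$); genericity actually gives $|\chi(\A)|=|\phi(\A,a')|$, which is $\le k$ and need not equal $k$. This does not affect the rest of your argument since you immediately set $n:=|\chi(\A)|$ and work with $n$ and $k$ separately, but the parenthetical reasoning there is wrong.
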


\begin{proof}
Write $\bar{a}=(a_i)_{i<\a}$, $\bar{b}=(b_i)_{i<\alpha}$, and let
$g: \bar{a} \to \bar{b}$ be such that $g(a_i)=b_i$ for $i<\a$.
Since $c \in\E$-$cl_\A(\bar{a})$,
there is some formula $\psi(x,y) \in \E$ 
such that $\A \models \psi(c, \bar{a})$. % and the set $\psi(\A, a'')$ is finite.
As $c$ is a generic realisation of $\phi(x,\bar{a})$ over $\bar{a}$,
we have $\phi(\A, \bar{a})=\phi(\A,\bar{a}) \wedge \psi(\A,\bar{a})$.
We will find an element $d \in \B$ which is a generic realisation of
$\phi(x, \bar{b}) \wedge \psi(x,\bar{b})$ over $\bar{b}$ in $\B$,
and then show that $d$ is as wanted.

Suppose, for the sake of contradiction, that  no element in $(\psi \wedge \phi)(\B, \bar{b})$ is generic over $\bar{b}$.
Let $n=\vert (\psi \wedge \phi) (\A, \bar{a})\vert$ and $m= \vert \psi(\A, \bar{a}) \vert$.
The formula 
$$\Psi(y)=\exists^{=n} x (\psi(x,y) \wedge \phi(x,y)) \wedge \exists^{=m}x\psi(x,y)$$
is in $\E^+$, $tp_{\E^{+}}^\A(\bar{a}/\emptyset)=tp_{\E^{+}}^\B(\bar{b}/\emptyset)$,
and $\A \models\Psi(\bar{a})$,
so $\vert (\psi \wedge \phi )(\B, \bar{b}) \vert=n$.
 
Write $(\psi \wedge \phi)(\B, \bar{b})=\{d_1, \ldots, d_n\}$.
By the counterassumption, there is, for each $d_i$,
some formula $\chi_i(x, z) \in \E^+$
such that $\B \models \chi_i(d_i, \bar{b})$ and $\vert \chi_i(\B, \bar{b}) \vert=n_i <n$.
Denoting
$$\Phi(x,y,z)=\psi(x,y) \wedge \phi(x,y) \wedge \bigvee_{i=1}^n \chi_i(x,z),$$
we have $\exists^{=n}x \Phi(x,y,z) \wedge \exists^{=m}x \psi(x,y) \in \E^+$.
Since $tp_{\E^{+}}^\A(\bar{a}/\emptyset)=tp_{\E^{+}}^\B(\bar{b}/\emptyset),$
and there are exactly $n$ many distinct $d \in \B$
such that $\B \models \Phi(d, \bar{b})$,
we have $\vert \Phi(\A, \bar{a}) \vert=n$,
and thus $\Phi(\A, \bar{a})=(\phi \wedge \psi)(\A, \bar{a})$.
On the other hand, 
$\exists^{=n_i} x (\psi \wedge \chi_i)(x,y,z) \wedge \exists^{=m}x\psi(x,y) \in \E^+$,
and thus $\vert \psi(\A, \bar{a}) \wedge \chi_i(\A,\bar{a}) \vert=n_i<n$ for each $i<n$, 
which contradicts the genericity of $c$.

Thus, there is some $d \in \B$ which is a generic realisation of
$\phi(x, \bar{b}) \wedge \psi(x,\bar{b})$ over $\bar{b}$ in $\B$. 
Clearly $d \in \E$-$cl_\B(\bar{b})$.
We show that if $\chi(x,y) \in \E^+$,
and $\A \models \chi(c, \bar{a})$, then $\B \models \chi(d, \bar{b})$ (the other direction is symmetric).
Since $c$ is a generic realization of $\phi(x,\bar{a}) \wedge \psi(x,\bar{a})$ over $\bar{a}$ and $tp_{\E^+}^\A(\bar{a}/\emptyset)=tp_{\E^+}^\B(\bar{b}/\emptyset)$, we have
\begin{eqnarray*}
\vert \phi(\A, \bar{a}) \cap \psi(\A, \bar{a}) \cap \chi(\A,\bar{a}) \vert&=& \vert \phi(\A,\bar{a}) \vert=n \\
&=& \vert \phi(\B,\bar{b}) \vert=\vert \phi(\A, \bar{a}) \cap \psi(\A, \bar{a}) \cap \chi(\A,\bar{a}) \vert,
\end{eqnarray*}
and thus any realization of $\phi(x,\bar{b})$ satisfies $\psi(x,\bar{b}) \wedge \chi(x,\bar{b}).$
 
\end{proof}

\begin{lemma}\label{etyypit} 
Suppose $\K$ is a multiuniversal AEC,
and there is a collection $\E$ of quantifier free formulae such that for all $\A \in \K$ and $A \subseteq \A$,
$cl_\A(A)=\E$-$cl_\A(A)$.
Then, for any $\A, \B \in \K$, and any (possibly infinite) sequences $\bar{a} \in \A$, $\bar{b} \in \B$,
it holds that $tp^g(\bar{a}/\emptyset; \A)=tp^g(\bar{b}/\emptyset; \B)$ if and only if $tp_{\E^+}^\A(\bar{a}/\emptyset)=tp_{\E^+}^\B(\bar{b}/\emptyset)$.  
\end{lemma}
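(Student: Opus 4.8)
The plan is to prove both implications separately. The direction from Galois types to $\E^+$-types is quick once Lemma \ref{closisom} is in hand; the converse is the substantial part and will be a back-and-forth construction powered by Lemmas \ref{lisalemma} and \ref{gensamat}.

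For the easy direction, suppose $tp^g(\bar a/\emptyset;\A)=tp^g(\bar b/\emptyset;\B)$. By Lemma \ref{closisom} there is an isomorphism $f\colon cl_\A(\bar a)\cong cl_\B(\bar b)$ with $f(\bar a)=\bar b$. Since isomorphisms preserve all first order formulas, it suffices to show that $\E^+$-formulas are absolute between $\A$ and $cl_\A(\bar a)$ (and between $\B$ and $cl_\B(\bar b)$) when evaluated at tuples from $cl_\A(\bar a)$, for then $\A\models\phi(\bar a)$ iff $cl_\A(\bar a)\models\phi(\bar a)$ iff $cl_\B(\bar b)\models\phi(\bar b)$ iff $\B\models\phi(\bar b)$ for every $\phi\in\E^+$, which is exactly $tp_{\E^+}^\A(\bar a/\emptyset)=tp_{\E^+}^\B(\bar b/\emptyset)$. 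The absoluteness I would prove by induction on the construction of $\E^+$; atomic and negated atomic formulas and conjunctions/disjunctions are immediate. For the last clause, a formula $\exists^{=n}x\bigwedge_{i}\phi_i(x,\bar y_i)\wedge\exists^{=k}x\phi_1(x,\bar y_1)$ with $\phi_1\in\E$ quantifier free: if $\bar e_1$ comes from $cl_\A(\bar a)$ then every element of $\phi_1(\A,\bar e_1)$ lies in $\E\text{-}cl_\A(\bar e_1)\subseteq\E\text{-}cl_\A(cl_\A(\bar a))=cl_\A(\bar a)$ (using idempotence of $cl_\A=\E\text{-}cl_\A$, which holds since $cl_\A(\bar a)\preccurlyeq\A$), so $\phi_1(\A,\bar e_1)=\phi_1(cl_\A(\bar a),\bar e_1)$, both counts $\exists^{=k}$ and $\exists^{=n}$ agree, and for the $\exists^{=n}$ part the witnesses all already live in $cl_\A(\bar a)$ where the inductive hypotheses for the $\phi_i$ apply.

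For the converse, first reduce to an isomorphism statement. Since $cl_\A(\bar a)\preccurlyeq\A$ (multiuniversality (i)), taking $\C=\A$ with $f_1=\mathrm{id}_\A$ and $f_2$ the inclusion shows $(\bar a,\emptyset,\A)\,E_{at}\,(\bar a,\emptyset,cl_\A(\bar a))$, and symmetrically on the $\B$ side; so by transitivity of $E$ it is enough to produce an isomorphism $cl_\A(\bar a)\cong cl_\B(\bar b)$ sending $\bar a$ to $\bar b$ (both closures lie in $\K$). I would build it by a back-and-forth of partial maps $p$ between subsets of $cl_\A(\bar a)$ and $cl_\B(\bar b)$, starting from $\bar a\mapsto\bar b$ and always maintaining the invariant that $p$ preserves the $\E^+$-type of its domain, where domain and range are enumerated in parallel via $p$. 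At a forward step, given a not-yet-matched $c\in cl_\A(\bar a)=\E\text{-}cl_\A(\bar a)$, hence $c\in\E\text{-}cl_\A(\mathrm{dom}\,p)$: choose $\phi_0\in\E$ with parameters in $\mathrm{dom}\,p$ witnessing this (so $\phi_0(\A,\cdot)$ is finite), apply Lemma \ref{lisalemma} to obtain $\psi\in\E^+$ over $\mathrm{dom}\,p$ with $c$ a generic realisation of $\phi_0\wedge\psi$ over $\mathrm{dom}\,p$, and then apply Lemma \ref{gensamat} (with the current invariant as its hypothesis) to obtain a matching $d\in\E\text{-}cl_\B(\mathrm{ran}\,p)\subseteq cl_\B(\bar b)$ preserving the invariant; that $d$ is new follows by applying the invariant to the inequalities $x\neq x_i$. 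Backward steps are the same with the roles of $\A$ and $\B$ exchanged, using that type equality is symmetric. At limit stages the invariant passes through by the finite character of $\E^+$-types. Since $\E^+$ contains all atomic and negated atomic formulas, each $p$ is a partial isomorphism, and interleaving the steps along enumerations of $cl_\A(\bar a)$ and $cl_\B(\bar b)$ produces in the limit a bijection $cl_\A(\bar a)\cong cl_\B(\bar b)$ with $\bar a\mapsto\bar b$.

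I expect the main obstacle to be not any single deep point but the bookkeeping of the back-and-forth: synchronising the enumerations of domain and range so that the invariant is exactly the hypothesis consumed by Lemma \ref{gensamat}, checking that the element it returns really lies in the closure of the \emph{current} range and is distinct from everything already matched, and confirming the limit-stage passage. The genuine mathematical content — finding, for each singleton in the $\E$-closure, a defining $\E^+$-formula of which it is generic, and transporting such a generic across an $\E^+$-type equality — is precisely what Lemmas \ref{lisalemma} and \ref{gensamat} already supply.
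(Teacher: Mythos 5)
Your proof is correct and follows essentially the same route as the paper's: the forward direction via Lemma \ref{closisom} together with an induction showing $\E^+$-formulas are preserved (the paper transfers across $f$ directly, you go via absoluteness to the closure, which is the same calculation), and the converse via a one-element-at-a-time extension powered by Lemmas \ref{lisalemma} and \ref{gensamat}, closed off with the observation that $(\bar a,\emptyset,cl_\A(\bar a))\,E_{at}\,(\bar a,\emptyset,\A)$. The paper leaves the back-and-forth bookkeeping implicit where you spell it out, but the substance is identical.
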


\begin{proof} 
Suppose first $tp^g(\bar{a}/\emptyset; \A)=tp^g(\bar{b}/\emptyset; \B)$.
Since $cl_\A(\bar{a})=\E$-$cl_\A(\bar{a})$ and $cl_\B(\bar{b})=\E$-$cl_\B(\bar{b})$,
we have  $\E$-$cl_\A(\bar{a}) \preccurlyeq \A$, $\E$-$cl_\B(\bar{b}) \preccurlyeq \B$,
and by Lemma \ref{closisom}, 
there is an isomorphism $f: \E$-$cl_\A(\bar{a}) \to \E$-$cl_\B(\bar{b})$ such that
$f(\bar{a})=\bar{b}$. 

We will prove the claim by showing that if $\phi(x) \in \E^+$ and $c \in \E$-$cl_\A(\bar{a})$ is a finite tuple,
then $\A \models \phi(c)$ if and only if $\B \models \phi(f(c))$.  
We will do this by induction on the formula $\phi$.
If $\phi$ is quantifier free, the claim clearly holds. 
It is also easy to see that if the claim holds for $\phi$ and $\psi$,
then it holds for $\phi \wedge \psi$ and $\phi \vee \psi$.  

Suppose now 
$$\Phi(y)=\exists^{=n}x\bigwedge_{i=1}^m \phi_i(x,y) \wedge \exists^{=k}x \phi_1(x,y),$$
where  $\phi_i \in \E^+$ for each $i$, $\phi_1 \in \E$, and the claim holds for each $\phi_i$.
Assume $\A \models \Phi(c)$.
Let $d_1, \ldots, d_n \in \A$ be the exactly $n$ many distinct elements $d$ such that $\A \models \bigwedge_{i=1}^m \phi_i(d,c)$.
To be able to apply the induction hypothesis, we need to show that $d_i \in \E$-$cl_\A(a)$ for each $i$.
And indeed, we have $\A \models \phi_1(d_i, c)$,
so $d_i \in \E$-$cl_\A(c)$.
Since 
$c \in \E$-$cl_\A(\bar{a})=cl_\A(\bar{a})$, we have 
$$d_i \in \E\textrm{-}cl_\A(c)=cl_\A(c) \subseteq cl_\A(\bar{a})=\E\textrm{-}cl_\A(\bar{a}).$$

Now, by the induction hypothesis, $\B \models \bigwedge_{i=1}^m \phi_i(f(d_j), f(c))$ for $j=1, \ldots,n$.
Moreover, these are all the elements of $\B$ that satisfy this formula.
Indeed, suppose for the sake of contradiction that there is some
$e \in \B$ such that $e \neq  f(d_i)$ for $i=1, \ldots, n$, and
$\B \models \bigwedge_{i=1}^m \phi_i(e, f(c))$.
We have $f(c) \in \E$-$cl_\B(\bar{b})$,
and since $\B \models \phi_1 (e, f(c))$,
it follows that $e \in cl_\B(cl_\B(\bar{b}))=cl_\B(\bar{b})= \E$-$cl_\B(\bar{b})$,
so $f^{-1}$ is defined at $e$.
Thus, $\A \models  \bigwedge_{i=1}^m \phi_i(f^{-1}(e),c)$ 
which contradicts the fact that only $n$ many elements of $\A$ satisfy this formula.
Thus, $\B \models \exists^{=n}x\bigwedge_{i=1}^m \phi_i(x,f(c))$.
Similarly, we can show that $\B \models \exists^{=k}x \phi_1(x,f(c))$.
Hence, $\B \models \Phi(f(c))$,
and we have proved that Galois types imply $\E^+$-types. 
  
For the other direction, 
let $\A, \B \in \K$, and
let $\bar{a}=(a_i)_{i<\a} \in \A$, $\bar{b}=(b_i)_{i<\a} \in \B$,
be two sequences such that
$tp_{\E^+}^\A(\bar{a})=tp_{\E^+}^\B(\bar{b})$.
We will find an isomorphism $f: cl_\A(\bar{a}) \to cl_\B(\bar{b})$
such that $f(\bar{a})=\bar{b}$.
To construct such an isomorphism,
it is enough to show that if $c \in cl_\A(\bar{a}) \setminus \bar{a}$,
then there is some  
$d \in cl_\B(\bar{b}) \setminus \bar{b}$
such that $tp_{\E^+}^\A(\bar{a}, c)=tp_{\E^+}^\B(\bar{b}, d)$. 
This follows from Lemmas \ref{lisalemma} and \ref{gensamat}.

Thus, we have seen that 
$(\bar{a}, \emptyset, cl_\A(\bar{a}))E_{at}(\bar{b}, \emptyset, cl_\B(\bar{b})).$
By multiuniversality, $cl_\A(\bar{a})\preccurlyeq \A$, and thus  
$(\bar{a}, \emptyset, cl_\A(\bar{a}))E_{at}(\bar{a}, \emptyset, \A)$.
Similarly, $(\bar{b}, \emptyset, cl_\B(\bar{b}))E_{at}(\bar{b}, \emptyset, \B)$,
and hence 
$tp^g(\bar{a}/\emptyset; \A)=tp^g(\bar{b}/\emptyset; \B)$. 
\end{proof}

\begin{remark}\label{toinenmaar}
Note that if we define the set $\E$ and the operator $\E$-$cl$ 
as in Remark \ref{alternative},
then we need the last conjunct ($ \exists^{=k} x\phi_1(x,\bar{y}_1)$)
of Definition \ref{eclos2} in the proof of Lemma \ref{etyypit} to conclude that
$d_i \in \E$-$cl_\A(c)$.
Indeed, we get that $\A \models \phi_1(d_i, c)$,
but we also need that $\vert \phi_1(\A,c) \vert$
is finite, and this follows from the fact that the formula $\Phi(y)$
contains $\exists^{=k}x \phi_1(x,y)$ as a conjunct. 
\end{remark}
 
\begin{remark}\label{multire}  
In \cite{multi},
it is shown that if $\K$ is a multiuniversal AEC, 
$\A, \B \in \K$, $\bar{a}=(a_i)_{i<\a} \in \A$,
$\bar{b}=(b_i)_{i<\a} \in \B$ are possibly infinite sequences, and for any finite 
$X \subseteq \a$, it holds that 
$tp^g((a_i)_{i \in X}/\emptyset; \A)=tp^g((b_i)_{i\in X}/\emptyset; \B)$,
then $tp^g(\bar{a}/\emptyset; \A)=tp^g(\bar{b}/\emptyset; \B)$ (Theorem 3.3.).
We note that our Lemma \ref{etyypit} implies this result.

To see this, let $(\K, \preccurlyeq)$ be a multiuniversal AEC,
and let $\E^*$ be the collection of all Galois types $p$ (of arbitrarily long finite tuples)
such that there is some $\A \in \K$,
some finite tuple $\bar{b} \in \A$, and an element $a \in \A$
such that $a \in cl_\A(\bar{b})$ and $a\bar{b}$ realises $p$.
Introduce new relation symbols $R_p$ for $p \in \E^*$,
and let $(\K^*, \preccurlyeq^*)$ be the class that consists of exactly the models of $\K$,
but with extra structure that is given by interpreting the relations $R_p$
so that $R_p(a, \bar{b})$ if and only if $a \bar{b}$ realises $p$.
Define the strong submodel relation $\preccurlyeq_{\K^*}$ in $\K^*$ so that $\A^*\preccurlyeq_{\K^*} \B^*$ 
if and only if $\A \preccurlyeq \B$ holds for the restrictions $\A$ and $\B$ of $\A^*$ and $\B^*$, respectively,
to the original language.
Let $\E$ be the collection of the formulae $R_p(x, \bar{y})$, $p \in \E^*$.

The class $(\K^*, \preccurlyeq_{\K^*})$ is clearly a multiuniversal AEC since $\K$ 
is.
The relations $R_p$ are preserved under isomorphisms since isomorphisms preserve Galois types. 
Thus, strong embeddings stay the same in the new class and preserve the relations $R_p$.

We want to apply Lemma \ref{etyypit}
to show that in this setting, Galois types are the same as existential types.
The result will then follow since existential types of infinite sequences are determined by the existential types of their finite subsets.
To be able to use the lemma, we need to show that for all
$\A \in \K^*$ and $A \subseteq \A$, $\E$-$cl_\A(A)=cl_\A(A)$.
If $a \in cl_\A(A)$,
then there is, by Fact 2.2 in \cite{multi} (or Proposition 2.14 in \cite{vasey2}), some finite $\bar{b} \in A$
such that $a \in cl_\A(\bar{b})$.
Then, $p=tp^g(a, \bar{b}/\emptyset; \A) \in \E^*$,
$\A \models R_p(a, \bar{b})$,
and by condition (ii) of Definition \ref{multiuni},
$R_p(\A, \bar{b})$ is finite,
so $a \in \E$-$cl_\A(\bar{b}) \subseteq \E$-$cl_\A(A).$
 
To see that $\E$-$cl_\A(A)\subseteq cl_\A(A)$,
we need to prove that Galois types determine the closure operation $cl_\A$,
i.e. that if $\A, \B \in \K$,
$a, \bar{b} \in \A$, $c, \bar{d} \in \B$, $a \in cl_\A(\bar{b})$,
and $tp^g(a \bar{b} /\emptyset; \A)=tp^g(c \bar{d} /\emptyset; \B)$,
then $c \in cl_\B(\bar{d})$.
For this, it suffices to show that if $((a, \bar{b}), \emptyset, \A) E_{at} ((c, \bar{d}), \emptyset, \B)$ 
and $a \in cl_\A(\bar{b})$, then $c \in cl_\B(\bar{d})$.
Suppose not.
By the definition of the relation $E_{at}$ (see Definition \ref{Eat}),  
there is some $\C \in \K$ and elementary embeddings
$f: \A \to \C$ and $g: \B \to \C$ such that $f(a)=g(c)$ and $f(\bar{b})=g(\bar{d})$.
Now $f(a) \in cl_\C(f(\bar{b}))$ but $g(c) \notin cl_\C(g(\bar{d}))$,
a contradiction.
\end{remark}

Recall our standing assumptions that we are working in a multiuniversal AEC $(\K, \preccurlyeq)$
and that $\E$ and $\E^+$ are as in Definitions \ref{eclos} and \ref{eclos2}.

\begin{lemma}\label{ee}
Let $\A$, $\B$ be models, and let $a \in \A$, $b \in \B$ be finite tuples.
If $tp_\exists^\A(a/\emptyset)=tp_\exists^\B(b/\emptyset)$,
then $tp_{\E^+}^\A(a/\emptyset)=tp_{\E^+}^\B(b/\emptyset)$
\end{lemma}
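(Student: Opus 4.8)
The plan is to reduce the lemma to the following claim and then prove the claim by induction: \emph{every formula in $\E^+$ is equivalent, over the class $\K$, to a Boolean combination of existential formulae}. Granting this, the lemma is immediate: if $tp_\exists^\A(a/\emptyset)=tp_\exists^\B(b/\emptyset)$, then $a$ in $\A$ and $b$ in $\B$ satisfy the same existential formulae, hence the same universal formulae, hence the same Boolean combinations of these, hence --- by the claim --- the same $\E^+$-formulae, which is exactly $tp_{\E^+}^\A(a/\emptyset)=tp_{\E^+}^\B(b/\emptyset)$. Throughout one works inside the fixed multiuniversal AEC $\K$, so that $\A,\B\in\K$ and the finiteness clause of Definition \ref{eclos} is available.

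The key technical ingredient I would isolate first is that a quantifier-free formula with uniformly finite solution sets --- in particular any $\rho(x,\bar z)\in\E$ --- transmits only a bounded part of an existential formula: if $\gamma(x,\bar y)$ is existential, then $\exists^{\geq p}x\,(\rho(x,\bar z)\wedge\gamma(x,\bar y))$, once written out as $\exists x_1\cdots x_p(\bigwedge_{l<l'}x_l\neq x_{l'}\wedge\bigwedge_l(\rho(x_l,\bar z)\wedge\gamma(x_l,\bar y)))$ with the existential quantifiers of the copies of $\gamma$ pulled to the front, is itself an existential formula in $\bar y$, hence decided by $tp_\exists^\A(\bar c)$. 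From this I would deduce: if $\theta(x,\bar y)$ is a Boolean combination of existential formulae, put into its full (disjoint) disjunctive normal form $\bigvee_j(\gamma_j\wedge\neg\delta_j)$ over the existential formulae it mentions (so each $\gamma_j,\delta_j$ is existential), then for $\rho$ as above and $\A\in\K$ the (finite) number $\vert\{\,e\in\rho(\A,\bar c):\A\models\theta(e,\bar c)\,\}\vert$ equals $\sum_j(\vert\{\,e\in\rho(\A,\bar c):\A\models\gamma_j(e,\bar c)\,\}\vert-\vert\{\,e\in\rho(\A,\bar c):\A\models(\gamma_j\wedge\delta_j)(e,\bar c)\,\}\vert)$; hence for each $N$ the condition ``that number equals $N$'' is equivalent over $\K$ to a Boolean combination of the existential threshold formulae $\exists^{\geq p}x(\rho\wedge\gamma_j)$ and $\exists^{\geq p}x(\rho\wedge\gamma_j\wedge\delta_j)$, and in particular is decided by $tp_\exists^\A(\bar c)$.

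With this ingredient in hand I would run the induction on the construction of $\phi\in\E^+$ from Definition \ref{eclos2}. Atomic and negated atomic formulae are quantifier-free, hence trivially of the required form, and the formulae equivalent over $\K$ to a Boolean combination of existential formulae are visibly closed under $\wedge$ and $\vee$; so only the counting case needs argument. For $\phi=\exists^{=n}x\bigwedge_{i=1}^m\phi_i(x,\bar y_i)\wedge\exists^{=k}x\,\phi_1(x,\bar y_1)$ with $\phi_1\in\E$ and $\phi_i\in\E^+$: the conjunct $\exists^{=k}x\,\phi_1(x,\bar y_1)$ is $\exists^{\geq k}x\phi_1\wedge\neg\exists^{\geq k+1}x\phi_1$, a Boolean combination of existential formulae because $\phi_1$ is quantifier-free; and by the induction hypothesis $\bigwedge_{i=1}^m\phi_i$ is equivalent over $\K$ to a Boolean combination $\theta(x,\bar y)$ of existential formulae. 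Since $\phi_1$ is one of the conjuncts, every realisation of $\theta$ in a model of $\K$ lies in the finite set $\phi_1(\A,\bar y_1)$, so $\exists^{=n}x\bigwedge_{i=1}^m\phi_i$ is equivalent over $\K$ to ``$\vert\{\,e\in\phi_1(\A,\bar y_1):\A\models\theta(e,\bar y)\,\}\vert=n$'', which by the ingredient above (taken with $\rho=\phi_1$) is a Boolean combination of existential formulae in $\bar y$. Hence $\phi$ has the required form, and the induction goes through.

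I expect the counting case to be the only genuine obstacle, and the ingredient of the second paragraph is exactly what is needed to overcome it: a naive induction breaks because $\exists x$ placed in front of a Boolean combination of existential formulae need not remain one, but the clause ``$\phi_1\in\E$'' built into the definition of $\E^+$ pins all witnesses of the counting quantifier inside a quantifier-free-definable \emph{finite} set, so the counting quantifier degenerates into a finite tally that inclusion--exclusion re-expresses through the genuinely existential threshold formulae $\exists^{\geq p}x(\text{quantifier-free}\wedge\text{existential})$.
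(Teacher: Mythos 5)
Your route is genuinely different from the paper's. The paper uses Lemma \ref{lisalemma} to fix a generic enumeration of $\phi_1(\A,a)$, transfers it to $\B$ via the shared existential type, and then applies the inductive hypothesis pointwise to the transferred elements; you instead aim at the stronger formula-level claim that each $\E^+$-formula is over $\K$ equivalent to a finite Boolean combination of existential formulae, from which the lemma is immediate and Lemma \ref{lisalemma} is bypassed. That would be a cleaner argument, and it makes transparent why Definition \ref{eclos2} carries the conjunct $\exists^{=k}x\,\phi_1$. There is, however, a gap in your ``key technical ingredient,'' though an easily fixable one.

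The ingredient claims that for $\rho\in\E$ and a disjoint DNF $\theta=\bigvee_j(\gamma_j\wedge\neg\delta_j)$, the condition $|\rho(\A,\bar{c})\cap\theta(\A,\bar{c})|=N$ is over $\K$ a finite Boolean combination of threshold formulae $\exists^{\geq p}x(\rho\wedge\gamma_j)$ and $\exists^{\geq p}x(\rho\wedge\gamma_j\wedge\delta_j)$. This is not right in general: the count is $\sum_j\bigl(|\rho\cap\gamma_j|-|\rho\cap\gamma_j\wedge\delta_j|\bigr)$, and Definition \ref{eclos} requires each $\rho(\A,\bar{b})$ to be finite but not uniformly bounded across $\K$, so the summands $|\rho\cap\gamma_j|$ range over all of $\mathbb{N}$ as $\A$ and $\bar{c}$ vary; no finite Boolean combination of thresholds can pin down ``the sum equals $N$'' under those circumstances. (Each summand \emph{is} determined by $tp_\exists^\A(\bar{c})$, but that weaker fact does not carry your induction, which in the counting step needs the inductive hypothesis to produce actual existential formulae $\gamma_j,\delta_j$.) In particular, your third paragraph's assertion that $\exists^{=n}x\bigwedge_i\phi_i$ \emph{alone} is a Boolean combination of existentials is unjustified. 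The fix is exactly the second conjunct: once $\exists^{=k}x\,\phi_1$ is conjoined, $|\phi_1(\A,\bar{c})|$ is pinned to $k$, each $\gamma_j$ may be taken to imply $\phi_1$ (discard the vacuous DNF disjuncts in which $\phi_1$ occurs negatively), so every relevant count lies in $\{0,\dots,k\}$, and the inclusion--exclusion collapses to a finite disjunction over tuples $(n_j,m_j)_j$ with $0\le m_j\le n_j\le k$ and $\sum_j(n_j-m_j)=n$. Restate the ingredient with the hypothesis that $|\rho(\A,\bar{c})|$ is a fixed constant $k$, and invoke it only for $\phi$ as a whole rather than for its first conjunct; then the induction closes.
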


\begin{proof}
Suppose $\phi(x) \in \E^+$ and $\A \models \phi(a)$.
We prove $\B \models \phi(b)$ by induction on the formula $\phi$.
The claim holds for quantifier free formulae, and if it holds for
$\phi$ and $\psi$, then it holds for $\neg \phi$, $\phi \wedge \psi$ and $\phi \vee \psi$.

Suppose now
$$\Phi(y)=\exists^{=n}x \bigwedge_{i=1}^m \phi_i(x,y) \wedge \exists^{=k}x\phi_1(x,y),$$
where $\phi_i \in \E^+$ for each $i$, $\phi_1 \in \E$, and the claim holds for each $\phi_i$.
Assume $\A \models \Phi(a)$.
Since $tp_\exists^\A(a/\emptyset)=tp_\exists^\B(b/\emptyset)$,
we have $\vert \phi_1(\A, a) \vert =\vert \phi_1(\B, b) \vert=k$.  Let $\bar{c}=(c_1, \ldots, c_k)$ enumerate the elements in $\phi_1(\A, a)$.
Apply now Lemma \ref{lisalemma} to the formula 
$\psi(\bar{x},a)=\wedge_{i=1}^k \phi_1(x_i,a)$
and to the set of existential formulae in variables $(x_1, \ldots, x_k)$
with parameters in $a$,
to find an existential $\mathcal{L}(a)$-formula  
$\theta(\bar{x}, a)$
such that $\A \models \theta(\bar{c},a)$
and for any existential formula $\chi(\bar{x}, a)$, it holds that $\vert (\theta \wedge \psi)(\A, a) \vert = \vert(\theta \wedge \psi \wedge \chi)(\A,a)\vert$. 
Then, any realization of $\theta(\bar{x}, a) \wedge \psi(\bar{x}, a)$
satisfies exactly the same existential formulae as $\bar{c}$ over $a$,
and therefore has the same existential type over $a$.

Since $tp_\exists^\A(a/\emptyset)=tp_\exists^\B(b/\emptyset)$, 
we have $\vert (\theta \wedge \psi)(\A, a) \vert = \vert (\theta \wedge \psi)(\B, b) \vert$.
If $\bar{d}\in \B$ is such that $\B \models (\theta \wedge \psi)(\bar{d},b)$,
then $tp_\exists^\A(a,\bar{c}/\emptyset)=tp_\exists^\B(b, \bar{d}/\emptyset)$.
Hence, there is a map $g: a \cup \phi_1(\A, a) \to b \cup \phi_1(\B, b)$ such that $g(a)=b$ and $g$ preserves existential types.

Thus, if $c \in \A$ is such that $\A  \models \bigwedge_{i=1}^m \phi_i(c,a)$,
then the inductive assumption gives  
$\B \models \phi_i(g(c),b)$ for $i=1, \ldots, m$,
so there are at least $n$ many elements $d \in \B$ such that $\B \models \bigwedge_{i=1}^m \phi_i(d,b)$.
If there were more than $n$ many of them, applying the same argument in the other direction,
we would get that $\A \models \bigwedge_{i=1}^m \phi_i(c,a)$ for more than $n$ many elements $c$,
a contradiction.
Thus, $\B \models \Phi(b)$.
\end{proof}

\begin{corollary}\label{multiremark}  
Let $(\K, \preccurlyeq)$ be a multiuniversal AEC, 
and suppose there is a collection of quantifier free formulae $\E$
such that for all $\A \in \K$ and $A \subseteq \A$,
$cl_\A(A)=\E$-$cl_\A(A)$. 
Then, existential types determine Galois types.
Moreover, if $\K$ has AP and JEP and $\A \in \K$ is existentially closed,
then Galois types coincide with existential types in $\A$.
\end{corollary}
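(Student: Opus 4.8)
The plan is to chain together the lemmas already established. For the first assertion — existential types determine Galois types — I would argue as follows. Suppose $\A, \B \in \K$ and $\bar a \in \A$, $\bar b \in \B$ are finite tuples with $tp_\exists^\A(\bar a/\emptyset) = tp_\exists^\B(\bar b/\emptyset)$. By Lemma \ref{ee}, this gives $tp_{\E^+}^\A(\bar a/\emptyset) = tp_{\E^+}^\B(\bar b/\emptyset)$. Since $cl_\A = \E$-$cl_\A$ everywhere by hypothesis, Lemma \ref{etyypit} applies and yields $tp^g(\bar a/\emptyset; \A) = tp^g(\bar b/\emptyset; \B)$. Finally, to pass from base $\emptyset$ to an arbitrary base $A$, I would use the standard move of naming the parameters in $A$ by constants (or equivalently apply the $\emptyset$-case to the expanded tuples $\bar a A$ versus $\bar b A$ when $A$ is finite, and reduce to the finite case otherwise, noting that existential types and Galois types over $A$ are determined by their restrictions to finite subsets of $A$). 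The upshot is: equality of existential types over $A$ implies equality of Galois types over $A$.

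For the second assertion, assume in addition that $\K$ has AP and JEP, so a monster model exists, and let $\A \in \K$ be existentially closed. We already have one direction (Galois $\Rightarrow$ existential is automatic, since strong embeddings are in particular embeddings and hence preserve quantifier-free and existential formulae; and the converse direction over $\emptyset$ is the first assertion). For the reverse direction I would show that if $\bar a, \bar b \in \A$ have the same Galois type over $A \subseteq \A$, then they have the same existential type over $A$. The key point where existential closedness enters: if $\A \models \exists \bar x\, \phi(\bar x, \bar a, a_0)$ for some quantifier-free $\phi$ and $a_0 \in A$, then picking a witness inside a larger model and using that $tp^g(\bar a/A;\A) = tp^g(\bar b/A;\A)$ means there is an automorphism of the monster fixing $A$ and sending $\bar a$ to $\bar b$; this carries the existential formula over, and then existential closedness of $\A$ pulls the witness back down into $\A$ itself, so $\A \models \exists \bar x\, \phi(\bar x, \bar b, a_0)$. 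By symmetry the existential types agree.

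The main obstacle I anticipate is bookkeeping around bases and infinite tuples rather than any genuine difficulty: Lemma \ref{ee} is stated for finite tuples over $\emptyset$, and Lemma \ref{etyypit} for possibly infinite tuples over $\emptyset$, so the reduction of the ``over $A$'' statement to the ``over $\emptyset$'' statement — and the reduction of infinite tuples to finite subtuples — needs to be done carefully, but both reductions are routine (existential and Galois types are both determined by finite data, the latter by Theorem 3.3 of \cite{multi}, re-derived here via Lemma \ref{etyypit} in Remark \ref{multire}). The only other subtlety is making sure that in the existentially-closed case one genuinely has a monster model in which to run the automorphism argument; this is guaranteed by AP and JEP as recalled just after Definition \ref{Eat}.
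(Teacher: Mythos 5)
Your proposal is correct and follows essentially the same route as the paper: the first assertion is obtained by chaining Lemma \ref{ee} (existential types imply $\E^+$-types) with Lemma \ref{etyypit} ($\E^+$-types determine Galois types), and the second by passing to the monster model furnished by AP and JEP, using an automorphism to carry the existential type across and existential closedness of $\A$ to pull it back. One small slip in your exposition: your parenthetical claim that ``Galois $\Rightarrow$ existential is automatic, since strong embeddings preserve existential formulae'' is not right — strong embeddings only push existential witnesses \emph{upward} into the larger model, so that direction genuinely requires existential closedness, which in fact is exactly what your subsequent automorphism argument uses; the substance is therefore fine, but the word ``automatic'' should be dropped. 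The bookkeeping you discuss about reducing an arbitrary base $A$ to $\emptyset$ via concatenated tuples is correct but more than the paper bothers to spell out.
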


\begin{proof}
By Lemmas \ref{etyypit} and \ref{ee},
existential types determine Galois types.
Suppose 
$\K$ has AP and JEP and $\A \in \K$ is existentially closed.
Then, $\K$ has a monster model $\M$ such that $\A \preccurlyeq \M$.
If $\bar{a}, \bar{b} \in \A$ are two (possibly infinite) sequences such that $tp^g(\bar{a}/\emptyset; \A)=tp^g(\bar{b}/\emptyset; \A$),
then there is some automorphism of $\M$ that sends $\bar{a}$ to $\bar{b}$ (cf. Definition \ref{modhom} and the paragraph after). 
Since $\A$ is existentially closed, 
$$tp_\exists^\A(\bar{a}/\emptyset)=tp_\exists^\M(\bar{a}/\emptyset)=tp_\exists^\M(\bar{b}/\emptyset)=tp_\exists^\A(\bar{b}/\emptyset).$$
\end{proof}
  
Before presenting the main result, we recall the definition of a homogeneous class.

\begin{definition}
Let $\k$ be a cardinal.
We say a model $\A$ is strongly \emph{$\k$-homogeneous} if the following holds for all sequences $(a_i)_{i<\k}$,
$(b_i)_{i<\k} \subseteq \A$: If $tp^g((a_i)_{i \in X}; \A)=tp^g((b_i)_{i \in X}; \A)$ for every finite $X \subseteq \k$,
then  there is an automorphism $F$ of $\A$ with $F(a_i)=b_i$
for $i<\kappa$.

We say an AEC $\K$ is \emph{homogeneous}
if it has a strongly $\k$-homogeneous monster model for every cardinal $\k$.
 \end{definition}
 
\begin{corollary}\label{egalois}
Suppose $\K$ is a multiuniversal AEC with AP and JEP,
and there is a collection $\E$ of quantifier free formulae such that for all $\A \in \K$ and $A \subseteq \A$,
$cl_\A(A)=\E$-$cl_\A(A)$.
Then, $\K$ is homogeneous.   
\end{corollary}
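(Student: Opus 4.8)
The plan is: since $\K$ has AP and JEP, for every cardinal $\mu$ it contains a $\mu$-universal and $\mu$-model homogeneous model, i.e.\ a monster model; it therefore suffices to fix a cardinal $\kappa$ and a cardinal $\mu > \kappa + LS(\K) + \aleph_0$, and to show that the $\mu$-universal, $\mu$-model homogeneous monster model $\M$ is strongly $\kappa$-homogeneous. So fix sequences $(a_i)_{i<\kappa}$ and $(b_i)_{i<\kappa}$ in $\M$ with $tp^g((a_i)_{i\in X}/\emptyset;\M) = tp^g((b_i)_{i\in X}/\emptyset;\M)$ for every finite $X \subseteq \kappa$; write $\bar{a} = (a_i)_{i<\kappa}$ and $\bar{b} = (b_i)_{i<\kappa}$. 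The goal is an automorphism $F$ of $\M$ with $F(a_i) = b_i$ for all $i<\kappa$.

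First I would promote the hypothesis to the equality $tp^g(\bar{a}/\emptyset;\M) = tp^g(\bar{b}/\emptyset;\M)$ of Galois types of the full sequences. By Lemma \ref{etyypit} applied to each finite subtuple, the hypothesis gives $tp_{\E^+}^\M((a_i)_{i\in X}/\emptyset) = tp_{\E^+}^\M((b_i)_{i\in X}/\emptyset)$ for every finite $X$. Since every formula of $\E^+$ has only finitely many free variables, the $\E^+$-type of an infinite sequence over $\emptyset$ is determined by the $\E^+$-types of its finite subsequences; hence $tp_{\E^+}^\M(\bar{a}/\emptyset) = tp_{\E^+}^\M(\bar{b}/\emptyset)$. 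Applying Lemma \ref{etyypit} once more, this time to the infinite sequences $\bar{a},\bar{b}$ and in the converse direction, yields $tp^g(\bar{a}/\emptyset;\M) = tp^g(\bar{b}/\emptyset;\M)$. (This is precisely the route of Remark \ref{multire}: Lemma \ref{etyypit} lets one push the automatic finite-to-infinite coherence of the finitary $\E^+$-types over to Galois types.)

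Next I would convert this into an automorphism. By Lemma \ref{closisom}, applied with $\A = \B = \M$, there is an isomorphism $f : cl_\M(\bar{a}) \to cl_\M(\bar{b})$ with $f(\bar{a}) = \bar{b}$, and $cl_\M(\bar{a}), cl_\M(\bar{b}) \preccurlyeq \M$ by clause (i) of Definition \ref{multiuni}. The point is to bound their size: since $cl_\M(\bar{a}) = \E\textrm{-}cl_\M(\bar{a})$, every element of $cl_\M(\bar{a})$ realises some $\E$-formula $\phi(x,\bar{y})$ over a finite tuple $\bar{c}$ from $\bar{a}$, and by Definition \ref{eclos} each set $\phi(\M,\bar{c})$ is finite; there are countably many formulas and at most $\kappa + \aleph_0$ finite tuples from $\bar{a}$, so $|cl_\M(\bar{a})| \le \kappa + \aleph_0 < \mu$, and similarly for $cl_\M(\bar{b})$. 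By $\mu$-model homogeneity of $\M$, the isomorphism $f$ between these two strong submodels extends to an automorphism $F$ of $\M$, and then $F(a_i) = f(a_i) = b_i$ for every $i<\kappa$, which is what was needed.

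The substantive content has already been carried out in Lemmas \ref{etyypit} and \ref{closisom}, so I do not expect a genuine obstacle; the two points that need care are merely (a) observing that coherence of $\E^+$-types under finite subsequences is immediate because $\E^+$ consists of finitary formulas, and (b) the uniform size bound on the closures $cl_\M(\bar{a})$, which is exactly where the finiteness requirement built into $\E$ in Definition \ref{eclos} is used.
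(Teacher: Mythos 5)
Your proof is correct and follows essentially the same route as the paper's: reduce Galois type agreement to $\E^+$-type agreement via Lemma \ref{etyypit}, use the finitary nature of $\E^+$-formulas to pass from finite subtuples to the full sequences, and then invoke Lemma \ref{closisom} together with model homogeneity to produce the automorphism. The paper's own proof is terser and routes through Corollary \ref{multiremark} (Galois types equal \emph{existential} types in the existentially closed monster) rather than directly through Lemma \ref{etyypit}, but this is an inessential difference since one is a corollary of the other; your choice of Lemma \ref{etyypit} is arguably cleaner because it applies to all models and does not require noting that $\M$ is existentially closed. You are also more careful than the paper about the cardinal bookkeeping: the paper simply asserts that $\M_\kappa$ is strongly $\kappa$-homogeneous, whereas the closures $cl_{\M}(\bar a)$ of sequences of length $\kappa$ can have size exactly $\kappa$, which is not strictly below $\kappa$; your choice of $\mu > \kappa + LS(\K) + \aleph_0$ and the explicit bound $|cl_\M(\bar a)| \le \kappa + \aleph_0 < \mu$ fix this harmlessly, and this is permitted since the definition of a homogeneous AEC only requires \emph{some} strongly $\kappa$-homogeneous monster for each $\kappa$, not $\M_\kappa$ itself.
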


\begin{proof}
For each cardinal $\k$, the class $\K$ has a $\k$-universal and $\k$- model homogeneous monster model
$\M_\k$.
By Corollary \ref{multiremark}, in $\M_\k$,
Galois types are the same as existential types.
Since existential types of infinite sequences are determined by the existential types of their finite subsets,
$\M_\k$ is $\k$-homogeneous.
\end{proof}

\section{The AEC framework}

\noindent
We now turn our attention to fields with commuting automorphisms.
We take our signature to be 
$$L=\{0,1, \cdot, +, -,(\,)^{-1}, \sigma_1, \ldots, \sigma_n, \sigma_1^{-1}, \ldots, \sigma_n^{-1}\},$$
and let $T$ be the first order theory that states $K$ is a field,
$\sigma_1, \ldots, \sigma_n$ are automorphisms of $K$, that they commute,
and that for each $i=1, \ldots, n$, the map $\sigma_i^{-1}$ is the inverse of $\sigma_i$.
We denote the field theoretic algebraic closure of a field $K$ by $K^{alg}$.
Moreover, if $K$ is a field and $A \subseteq K$, we use the shorthand $A^{alg}$ for $\langle A \rangle^{alg}$, where $\langle A \rangle$ is the subfield generated by $A$.

One of the main differences between our setting and that of difference fields is that in our case,
existentially closed models need not be algebraically closed as fields.
When we have a field with several automorphisms, each one of the automorphisms extends to the (field-theoretic) algebraic closure but there are many cases in which the lifts cannot be chosen so that they would still commute.
The following example illustrates one such case.

\begin{example}\label{quaternion}
By \cite{quaternion}, there exists a number field $L$ such that $Gal(L/\mathbb{Q})=Q_8$, the quaternion group given by the generating relations 
$$Q_8=\{e, \bar{e}, i, \bar{i}, j, \bar{j}, k, \bar{k}\}=\langle \bar{e},i,j,k \, | \, i^2=j^2=k^2=ijk=\bar{e}, \bar{e}^2=e\rangle.$$
The center of $Q_8$ is $C=\{e, \bar{e}\}$,
and by the fundamental theorem of Galois theory, there is some intermediate field $K$ 
of $L/\mathbb{Q}$ such that $Gal(L/K)=C$.
Then, $Gal(K/\mathbb{Q})=Q_8/C$,
a commutative group whose elements correspond with the cosets of $e$, $i$, $j$, and $k$.
Take the elements corresponding to (say) the cosets of $i$ and $j$.
Possible lifts to $Gal(L/\mathbb{Q})$ are $i$ or $\bar{i}$ and $j$ or $\bar{j}$, respectively,
but none of these commute with each other.
So, these commuting automorphisms of $K$ do not have commuting lifts even to $Gal(L/\mathbb{Q})$,
and thus not to $K^{alg}=\mathbb{Q}^{alg}$ either.
\end{example}

To address this problem, 
we will look at algebraically maximal (see below) models rather than algebraically closed ones.
Eventually, we will work inside a monster model, 
and then the notion of being an algebraically maximal model will coincide with being relatively algebraically closed as a subfield of the monster.

\begin{definition}\label{relalg}
Let $\A \models T$.
We say $\A$ is an \emph{algebraically maximal model of $T$} if the following holds:
Suppose $\B \models T$ and $\A \subseteq \B$,
and let $p(x)$ be a non-zero polynomial in one variable with coefficients from $\A$.
If $b \in \B$ is such that $p(b)=0$, then $b \in \A$.
\end{definition}
 
\begin{remark}\label{perfect}
We note that every algebraically maximal model of $T$ is perfect as a field.
%Indeed, automorphisms extend uniquely to the perfect closure of a field.
Indeed, let $K \models T$ and suppose $char(K)=p>0$.
The Frobenius map $Frob: x \mapsto x^p$ defines an isomorphism $K^{alg} \to K^{alg}$,
and the perfect closure of $K$ equals $\bigcup_{n \in \mathbb{N}} Frob^{-n}(K)$.
Each distinguished automorphisms $\sigma_i$, $i \in \{1, \ldots, n\}$,
has the unique extension $Frob^{-n} \sigma_i Frob^{n}$ to $Frob^{-n}(K)$.
Clearly, the automorphisms $Frob^{-n} \sigma_i Frob^{n}$, $i=1, \ldots, n$,
commute.
\end{remark}

We use the notion \emph{relatively algebraically closed} in its usual algebraic sense, i.e. if $K \subseteq L$ are fields, we say $K$ is relatively algebraically closed in $L$ if $K^{alg} \cap L=K$.

We want to work inside a monster model, and thus we need to build our AEC in such a way that it has the joint embedding property (JEP) and amalgamation property (AP) (see Definitions \ref{ap} and \ref{jep}). 

In order to obtain the JEP, we will fix a prime model that will be contained in each model.
We will then take the class to consist of all algebraically maximal models of $T$ that contain the prime model.  

In \cite{ChHr}, a closure operator is defined on a difference field by first closing a set under the distinguished automorphism and then taking the (field theoretic) algebraic closure.
We take the same approach, but in order to obtain a model of $T$,
we use the relative algebraic closure instead of the algebraic closure.
 
\begin{definition}
 If $\A$ is an algebraically maximal model of $T$
and $A \subseteq \A$,
then we define $acl_{\sigma}^\A (A)$
to be the smallest subfield of $\A$
that is closed under the distinguished automorphisms and relatively algebraically closed in $\A$. 
\end{definition}

\begin{definition}\label{class}
Suppose $\A$
is a algebraically maximal model of $T$, and
let $\A_0=acl_{\sigma}^\A(\emptyset)$.
We define the class $\K_{\A_0}$
to consist of all algebraically maximal models of $T$ that contain $\A_0$. 
If $\A, \B \in \K_{\A_0}$, we define $\A \preccurlyeq \B$ if $\A \subseteq \B$.
We say a class obtained this way is an \emph{FCA-class} (for Fields with Commuting Automorphisms). 
\end{definition}

Note that at this point we do not yet know whether $\A_0 \in \K_{\A_0}$ since we do not know if $\A_0$
is an algebraically maximal model of $T$.
However, as soon as we prove that we can amalgamate over it (Lemma \ref{AP0}),
it will turn out that it is.

\begin{remark}
Note that if $\A, \B \in \K_{\A_0}$
and $\A \preccurlyeq \B$, then $\A^{alg} \cap \B=\A$.
\end{remark}
 
The idea of the proof for the amalgamation result comes from the proof of Theorem (1.3) in \cite{ChHr},
and it is based on the fact that if two fields, $K_1$ and $K_2$, are linearly disjoint over $K_0$ (inside some large field), 
then the tensor product $K_1 \otimes_{K_0} K_2$ is a domain
(for more on linear disjointness of fields, see e.g. \cite{cohn}, chapter 11.6).

\begin{lemma}\label{tensor}
Let $K_0, K_1, K_2 \models T$ and $K_0 \subseteq K_1 \cap K_2$.
Suppose $\mathbb{F}$ is some field such that $K_1, K_2 \subseteq \mathbb{F}$, and assume
$K_1$ and $K_2$ are linearly disjoint over $K_0$.
Let $\sigma_1, \ldots, \sigma_n$ and $\sigma_1', \ldots, \sigma_n'$
be the distinguished automorphisms of $K_1$ and $K_2$,  respectively.
Suppose $\sigma_i \raj K_0=\sigma_i' \raj K_0$ for $i=1, \ldots, n$,
and suppose these restrictions give the distinguished automorphisms of 
$K_0$.
Then, the automorphisms $\sigma_i$ and $\sigma_i'$, 
have a unique common extension to the field composite of $K_1$ and $K_2$ in $\mathbb{F}$
for  $i=1, \ldots, n$, and these extensions commute. 
 \end{lemma}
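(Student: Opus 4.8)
The plan is to build the common extension in two stages: first extend the $\sigma_i$ to the tensor product $K_1 \otimes_{K_0} K_2$ (which we will then realize as a subring of $\mathbb{F}$), and then extend further to its fraction field, which is the field composite $K_1 K_2$ inside $\mathbb{F}$. Throughout, the key algebraic input is that linear disjointness of $K_1$ and $K_2$ over $K_0$ implies the natural multiplication map $K_1 \otimes_{K_0} K_2 \to \mathbb{F}$ is injective, so $K_1 \otimes_{K_0} K_2$ is a domain sitting inside $\mathbb{F}$, and $K_1 K_2$ is its field of fractions. This is standard (see \cite{cohn}, ch.~11.6), and is the same mechanism used in the proof of Theorem (1.3) of \cite{ChHr}.

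For the first stage: since $\sigma_i \raj K_0 = \sigma_i' \raj K_0$ is an automorphism of $K_0$, the pair $(\sigma_i, \sigma_i')$ is compatible with the bilinear structure over $K_0$, so there is a well-defined additive map $\tau_i := \sigma_i \otimes \sigma_i'$ on $K_1 \otimes_{K_0} K_2$ determined by $\tau_i(a \otimes b) = \sigma_i(a) \otimes \sigma_i'(b)$. One checks directly from this formula that $\tau_i$ is a ring homomorphism, that it is bijective (its inverse is $\sigma_i^{-1} \otimes (\sigma_i')^{-1}$, which exists for the same reason since $\sigma_i^{-1} \raj K_0 = (\sigma_i')^{-1} \raj K_0$), and that $\tau_i$ restricts to $\sigma_i$ on $K_1 = K_1 \otimes 1$ and to $\sigma_i'$ on $K_2 = 1 \otimes K_2$. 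Commutativity of the $\tau_i$ among themselves is immediate from the tensor formula together with the commutativity of the $\sigma_i$ on $K_1$ and of the $\sigma_i'$ on $K_2$: both $\tau_i \tau_j$ and $\tau_j \tau_i$ send $a \otimes b$ to $\sigma_i \sigma_j(a) \otimes \sigma_i' \sigma_j'(b)$. Identifying $K_1 \otimes_{K_0} K_2$ with the subring $K_1[K_2] = K_1 K_2$ -- wait, more precisely with the subring generated by $K_1$ and $K_2$ -- of $\mathbb{F}$ via the injective multiplication map, we obtain commuting automorphisms of this subring extending the given ones.

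For the second stage: any automorphism of an integral domain extends uniquely to its field of fractions, and this extension operation is functorial, hence preserves composition and therefore commutativity. Applying this to each $\tau_i$ yields commuting automorphisms $\hat{\tau}_i$ of $\mathrm{Frac}(K_1 \otimes_{K_0} K_2) = K_1 K_2 \subseteq \mathbb{F}$ extending $\sigma_i$ and $\sigma_i'$, which is exactly what is claimed. I do not expect a serious obstacle here; the one point requiring a little care is making sure the multiplication map $K_1 \otimes_{K_0} K_2 \to \mathbb{F}$ really is injective (so that the abstract tensor product is faithfully represented inside $\mathbb{F}$ and the $\tau_i$ transport to genuine maps on $K_1 K_2$) -- this is precisely where linear disjointness over $K_0$ is used, and it is the hypothesis that has been handed to us.
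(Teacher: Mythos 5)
Your proposal is correct and takes essentially the same route as the paper: realize $K_1 \otimes_{K_0} K_2$ inside $\mathbb{F}$ via linear disjointness, define $\tau_i = \sigma_i \otimes \sigma_i'$ on the tensor product (well-defined precisely because $\sigma_i \raj K_0 = \sigma_i' \raj K_0$ is an automorphism of $K_0$), and extend to the fraction field, which is the composite. You simply spell out the verifications (ring map, bijectivity, restrictions, commutativity, and extension to fractions) that the paper declares as ``clearly'' holding.
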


\begin{proof}
Let $L$ be the composite of the fields $K_1$ and $K_2$ in $\mathbb{F}$.
Since $K_1$ and $K_2$ are linearly disjoint over $K_0$,
$L$ is the field of quotients of $K_1 \otimes_{K_0} K_2$.
Define automorphisms $\tau_i$, $i=1, \ldots, n$ of $L$ so that
$\tau_i(a \otimes b)=\sigma_i(a) \otimes \sigma_i'(b)$  
for $a \in K_1$, $b \in K_2$.
Now, $\tau_i \raj K_1 = \sigma_i$ and $\tau_i \raj K_2 = \sigma_i'$ for $i=1,\ldots, n$, and the automorphisms clearly commute.
\end{proof}

To prove the amalgamation property, we need the following fact from algebra.

\begin{fact}\label{algebra}
Suppose $E/k$ is a separable field extension, 
and $k$ is relatively algebraically closed in $E$ (in other words, the extension $E/k$ is \emph{regular}).
Then, $E$ and $k^{alg}$ are linearly disjoint over $k$ in $E^{alg}$.  
\end{fact}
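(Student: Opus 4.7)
The plan is to reduce the claim to a degree equality over finite subextensions and then handle the separable and purely inseparable parts separately. Linear disjointness of $E$ and $k^{alg}$ over $k$ is equivalent, by passage to finite subextensions, to the equality $[EL:E]=[L:k]$ holding for every finite subextension $L$ of $k^{alg}/k$. I would fix such an $L$ and decompose it as a tower $k\subseteq L_s\subseteq L$, where $L_s$ is the separable closure of $k$ inside $L$; then $L_s/k$ is separable algebraic and $L/L_s$ is purely inseparable.

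For the separable floor, I would use the primitive element theorem to write $L_s=k(\alpha)$ with minimal polynomial $f\in k[x]$ of degree $[L_s:k]$. Suppose for contradiction that $f$ factors non-trivially in $E[x]$ as $f=g\cdot h$ with $1\le\deg g<\deg f$. The roots of $g$ are among the roots of $f$ and hence lie in $k^{sep}\subseteq k^{alg}$. The coefficients of $g$ are, up to sign, elementary symmetric polynomials in those roots, so they lie in $k^{alg}$; on the other hand they lie in $E$. Since $k$ is relatively algebraically closed in $E$, the intersection $E\cap k^{alg}$ equals $k$, so the coefficients of $g$ lie in $k$, contradicting the irreducibility of $f$ over $k$. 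Hence $f$ remains irreducible in $E[x]$, which gives $[EL_s:E]=[L_s:k]$.

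For the purely inseparable top, I would appeal to the standard MacLane criterion for separability: $E/k$ being separable is equivalent to $E$ and $k^{1/p^{\infty}}$ being linearly disjoint over $k$, and this property is preserved on passing from the base $k$ to the separable algebraic extension $L_s$. Thus $EL_s/L_s$ is separable and therefore linearly disjoint from $L_s^{1/p^{\infty}}$ over $L_s$. Since $L/L_s$ is purely inseparable, $L$ embeds into $L_s^{1/p^{\infty}}$ over $L_s$, so $EL_s$ and $L$ are linearly disjoint over $L_s$, giving $[EL:EL_s]=[L:L_s]$. Multiplying the two floor-degree equalities yields $[EL:E]=[L_s:k]\cdot[L:L_s]=[L:k]$, as required.

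The main obstacle is the purely inseparable step: invoking the right form of MacLane's criterion and verifying that separability of $E/k$ descends through the tower $k\subseteq L_s$ to separability of $EL_s/L_s$ is what makes the hypothesis on $E/k$ actually do work. In characteristic zero this second step is vacuous, and the proof collapses to the separable floor alone, where only the hypothesis that $k$ is relatively algebraically closed in $E$ is needed.
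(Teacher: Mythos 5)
Your proof is correct. The paper does not actually prove this statement; Fact~\ref{algebra} is dispatched by a citation to \cite{cohn} (Theorem~11.6.15), and what you have written is essentially the standard argument for the classical characterisation of \emph{regular} extensions (separable plus relatively algebraically closed) via linear disjointness from $k^{alg}$. Your reduction to finite subextensions and the decomposition through the separable closure $L_s$ is the natural route, and the separable floor uses relative algebraic closedness exactly once, cleanly, by pushing the coefficients of a putative proper monic factor of the minimal polynomial back into $E\cap k^{alg}=k$.

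One point worth making explicit in the purely inseparable step: the cleanest justification for the separability of $EL_s/L_s$ is to combine the separability of $E/k$ with the linear disjointness of $E$ and $L_s$ over $k$ that you have just established on the separable floor, since separability (in MacLane's sense) is preserved under linearly disjoint base extension. Invoking only that ``$L_s/k$ is separable algebraic'' glosses over this: the general statement that separability of $E/k$ descends to $E/L$ for an intermediate $L$ fails when $L/k$ is transcendental (e.g.\ $k\subseteq k(u)\subseteq k(u^{1/p})$), so the algebraicity and the linear disjointness you already have are both doing genuine work in that transfer. With that sentence added, the two floors of your tower argument close up and the degree count $[EL:E]=[L:k]$ follows as you state.
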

\begin{proof}
See e.g. \cite{cohn}, Theorem 11.6.15.
\end{proof}

\begin{lemma}\label{AP0}
Suppose $\K$ is a FCA-class, $K_1 \in \K$, $K_2 \models T$,
and $K_0 \subseteq K_1 \cap K_2$ is such that $K_0 \models T$
and $K_0$ is relatively algebraically closed (as a field) in $K_1$.
Then, there exists some $L \models T$
such that $K_1 \subseteq L$ and an embedding $f: K_2 \to L$ 
such that $f \raj K_0=id$
and $K_1 \cap f(K_2)=K_0.$ 
\end{lemma}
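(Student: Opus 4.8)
The plan is to produce $L$ as the field of quotients of a suitable tensor product and then use Lemma \ref{tensor} to extend the automorphisms. The first step is to arrange linear disjointness. Since $K_1$ is a relatively algebraically closed model of $T$, by Remark \ref{perfect} it is perfect; hence the extension $K_1/K_0$ is separable, and by hypothesis $K_0$ is relatively algebraically closed as a field in $K_1$. By Fact \ref{algebra}, $K_1$ and $K_0^{alg}$ are linearly disjoint over $K_0$ inside $K_1^{alg}$. The point of passing to $K_0^{alg}$ is that $K_2$, being a model of $T$, need not be separable over $K_0$, so we cannot apply Fact \ref{algebra} to $K_2/K_0$ directly; instead we embed $K_2$ into $K_0^{alg}$ — or rather, a large algebraically closed field containing it — and use that $K_2 \subseteq K_0^{alg}$-part behaves well. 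Concretely: take a common overfield $\Omega$ of $K_1^{alg}$ and $K_2^{alg}$ (e.g.\ an algebraically closed field of large enough transcendence degree), choose an embedding of $K_2$ over $K_0$ into $\Omega$, and arrange that $K_1$ and $K_2$ are linearly disjoint over $K_0$ in $\Omega$. This is possible because $K_1$ is linearly disjoint from all of $K_0^{alg}$ over $K_0$, and $K_2$ embeds over $K_0$ into $K_0^{alg}$ composed with a purely transcendental extension; a standard free-composite/specialization argument then gives an embedding $f\colon K_2 \to \Omega$ with $f\raj K_0 = \mathrm{id}$ such that $K_1$ and $f(K_2)$ are linearly disjoint over $K_0$.

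With linear disjointness in hand, let $L$ be the composite field of $K_1$ and $f(K_2)$ inside $\Omega$. By Lemma \ref{tensor} (applied with $K_1$ as is, $K_2$ replaced by $f(K_2)$, which is again a model of $T$ via transport along $f$, and the distinguished automorphisms agreeing with those of $K_0$ on $K_0$ by the hypothesis that $K_0 \models T$ with its restricted automorphisms), the automorphisms $\sigma_i$ of $K_1$ and $\sigma_i'$ of $f(K_2)$ have a common extension $\tau_i$ to $L$, and the $\tau_i$ commute. Equipping $L$ with $0,1,+,\cdot,-,(\ )^{-1}$ and the $\tau_i$ (together with their inverses, which exist since each $\tau_i$ is an automorphism) shows $L \models T$. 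By construction $K_1 \subseteq L$ and $f\colon K_2 \to L$ is an embedding of $L$-structures fixing $K_0$.

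The last step is to check $K_1 \cap f(K_2) = K_0$. The inclusion $\supseteq$ is clear. For $\subseteq$: linear disjointness of $K_1$ and $f(K_2)$ over $K_0$ implies in particular that $K_1 \cap f(K_2)$, viewed as a $K_0$-subspace of both, must be $K_0$ — indeed if $a \in K_1 \cap f(K_2) \setminus K_0$ then $\{1,a\}$ is $K_0$-linearly independent in $K_1$ but not linearly disjoint from $f(K_2)$, since $a \in f(K_2)$ witnesses a nontrivial $K_0$-linear relation between the images of $1$ and $a$ in $L$. Hence $K_1 \cap f(K_2) = K_0$, as required.

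I expect the main obstacle to be the first step: producing the embedding $f\colon K_2 \to \Omega$ over $K_0$ that makes $K_1$ and $f(K_2)$ linearly disjoint over $K_0$. Fact \ref{algebra} handles $K_1$ versus $K_0^{alg}$, but one must carefully combine this with a free transcendental composite so that the (possibly inseparable, possibly of large transcendence degree) field $K_2$ can be placed in general position relative to $K_1$. The standard fact that a field is linearly disjoint over $K_0$ from $K_0^{alg}$ exactly when it is a regular extension, together with the behavior of linear disjointness under composita, is what makes this go through; this is essentially the move used in the proof of Theorem (1.3) of \cite{ChHr}, adapted to the relative algebraic closure.
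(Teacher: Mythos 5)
Your proof is correct and follows essentially the same route as the paper: apply Fact~\ref{algebra} to get $K_1$ and $K_0^{alg}$ linearly disjoint over $K_0$, place $K_2$ in general position so that $K_1$ and $K_2$ become linearly disjoint over $K_0$, then invoke Lemma~\ref{tensor} on the composite. One small correction: since $K_0$ is relatively algebraically closed in the perfect field $K_1$, $K_0$ is itself perfect, so $K_2/K_0$ is automatically separable --- the actual obstruction to applying Fact~\ref{algebra} to $K_2/K_0$ is that $K_0$ need not be relatively algebraically closed in $K_2$; and the paper handles the positioning step more directly, by moving $K_2^{alg}$ with an automorphism of a large algebraically closed field $\mathbb{F}$ fixing $K_0^{alg}$ so that $K_1$ and $K_2$ become algebraically independent over $K_0^{alg}$, after which transitivity of linear disjointness finishes the job.
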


\begin{proof} 
If $\sigma_1, \ldots, \sigma_n$ and $\sigma_1', \ldots, \sigma_n'$
are the distinguished automorphisms of $K_1$ and $K_2$, respectively,
then the distinguished automorphisms of $K_0$ are given by $\sigma_i \raj K_0=\sigma_i' \raj K_0$,
for $i=1, \ldots, n$. 

Embed the fields $K_1$ and $K_2$ (as pure fields) into some large algebraically closed field $\mathbb{F}$.
Since $K_0$ is relatively algebraically closed in $K_1$,
we may assume $K_1$ and $K_2$ are algebraically independent (and thus linearly disjoint) over $K_0^{alg}$
(if needed, use an automorphism of $\mathbb{F}$ to move $K_2^{alg}$ 
while fixing $K_0^{alg})$.
By Remark \ref{perfect}, $K_1$ is a perfect field, 
and since $K_0$ is relatively algebraically closed in $K_1$,
it is also perfect.
Thus $K_1/K_0$ is a separable extension,
and by Fact \ref{algebra},
$K_0^{alg}$ and $K_1$ are linearly disjoint over $K_0$.
Hence, $K_1$ and $K_2$ are linearly disjoint over $K_0$.
By Lemma \ref{tensor}, the automorphisms $\sigma_i$ and  $\sigma_i'$ 
have commuting extensions $\tau_i$ to the composite $L$ of $K_1$ and $K_2$,
and these extensions commute.
Now, $(L, \tau_1, \ldots, \tau_n) \models T$.
\end{proof}

\begin{remark}\label{sulkeumaremark}
Note that it follows from Lemma \ref{AP0}
that if $\K$ is an FCA-class, $\A \in \K$ and $A \subseteq \A$,
then $\A'=acl_\sigma^\A(A) \in \K$.  
Indeed, suppose $\B \models T$, $\A' \subseteq \B$,
$p$ is a non-zero polynomial with coefficients from $\A'$, and $b \in \B$ is such that $p(b)=0$.
By Lemma \ref{AP0},
there is some $\C \models T$ such that $\A \subseteq \C$
and an embedding $f: \B \to \C$ such that $f \raj \A=id$.
Since $\A' \subseteq \A$, $f(b) \in \C$ is a root of $p$,
and since $\A$ is an algebraically maximal model of $T$,
we have $f(b) \in \A$, and thus $f(b)=b$.
Now $b \in \A'$ since $\A'$ is  relatively algebraically closed (as a field) in $\A$.

Thus, in particular, for the field $\A_0$ from Definition \ref{class},
we have $\A_0 \in \K_{\A_0}$.
\end{remark}

It is easy to see that an FCA-class is an abstract elementary class (AEC),
and from Remark \ref{sulkeumaremark},
it follows that it is multiuniversal in the sense of \cite{multi} (see Definition \ref{multiuni} in the present paper).
Indeed, if $\K$ is an FCA-class, $\A, \B \in \K$ are such that $\B \preccurlyeq \A$ and $A \subseteq \B$,
then $acl_\sigma^\A(A) \preccurlyeq \B$,
and thus $cl_\A(A)=acl_\sigma^\A(A)$.
 
FCA-classes serve as  examples of a multiuniversal class that is not universal (as an AEC) (see Example 2.7 in \cite{multi}). 
Moreover, it is easy to see that if we take the collection $\E$ 
to consist of formulae of the form $x^n+\sum_{k=0}^{n-1}t_k(\bar{y})x^k=0$,
where $n>0$, and each $t_k$ is an $L$-term, 
then the $acl_\sigma$-closure coincides with the $\E$-closure from Definition \ref{eclos}.
We will use Corollary \ref{egalois} to show that it is a homogeneous class and that existential types imply Galois types, but first we need to show that it has AP and JEP.

\begin{lemma}\label{AP}
If $\K$ is an FCA-class, then $\K$ has the disjoint amalgamation property.  
\end{lemma}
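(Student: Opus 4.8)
The plan is to reduce disjoint amalgamation for an FCA-class to Lemma \ref{AP0} plus the closure result of Remark \ref{sulkeumaremark}. Suppose $\A, \B, \C \in \K$ with strong embeddings $f \colon \C \to \A$ and $f' \colon \C \to \B$; after identifying along $f$ and $f'$ we may assume $\C = f(\C) = f'(\C)$ is a common strong submodel of both $\A$ and $\B$, so $\C \subseteq \A$, $\C \subseteq \B$, and $\C$ is relatively algebraically closed (as a field) in each of $\A$ and $\B$ — in particular $\C \models T$ and $\C$ is relatively algebraically closed as a field in $\A$, so the hypotheses of Lemma \ref{AP0} are met with $K_1 = \A$, $K_2 = \B$, $K_0 = \C$.

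First I would apply Lemma \ref{AP0} to obtain a model $L_0 \models T$ with $\A \subseteq L_0$ and an embedding $g' \colon \B \to L_0$ fixing $\C$ pointwise and satisfying $\A \cap g'(\B) = \C$. This $L_0$ need not itself be relatively algebraically closed, so the next step is to pass to a genuine member of $\K$: by Remark \ref{sulkeumaremark}, applied inside a relatively algebraically closed model of $T$ extending $L_0$ (one exists by a routine closure argument, taking an ascending union of field extensions that successively adjoin roots of polynomials and extend the automorphisms, exactly as in Remark \ref{perfect}), we get $\D \in \K$ with $L_0 \subseteq \D$; concretely one can take $\D = acl_\sigma^{\D'}(L_0)$ for a suitable relatively algebraically closed $\D' \models T \supseteq L_0$. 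Then let $g \colon \A \to \D$ be the inclusion and keep $g' \colon \B \to \D$ as above (composed with $L_0 \hookrightarrow \D$). One checks $g$ and $g'$ are strong embeddings: their images are relatively algebraically closed models of $T$ containing $\A_0$, hence members of $\K$ that are submodels of $\D$, so by clause (5) of the AEC axioms they are strong submodels of $\D$. Commutativity $g \circ f = g' \circ f'$ holds because both sides are the inclusion of $\C$ into $\D$.

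It remains to verify the disjointness clause $g(\A) \cap g'(\B) = g(f(\C))$. Inside $\D$ we have $g(\A) = \A$ and $g'(\B) = g'(\B)$, and Lemma \ref{AP0} already gives $\A \cap g'(\B) = \C = f(\C)$ when these are viewed inside $L_0$; since $L_0 \subseteq \D$ this intersection is unchanged in $\D$, so $g(\A) \cap g'(\B) = \C = g(f(\C))$, as required.

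The main obstacle is the passage from $L_0 \models T$ to an actual element $\D$ of $\K$ while preserving the disjointness of the two images — one must make sure that closing $L_0$ up to a relatively algebraically closed model does not merge $\A$ and $g'(\B)$ beyond $\C$. This is handled by first taking the union–of–chain field extension that makes $L_0$ into a relatively algebraically closed model of $T$ (new elements are algebraic over $L_0$, hence cannot lie in both $\A$ and $g'(\B)$ unless already in their common part, and in fact the relevant intersection is computed already at the $L_0$ stage), and then invoking Remark \ref{sulkeumaremark}; the bookkeeping is routine once one observes that any element of $\D \setminus L_0$ is algebraic over $L_0$ and therefore irrelevant to an intersection that is already contained in $L_0$.
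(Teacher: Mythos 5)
Your overall outline matches the paper's: apply Lemma~\ref{AP0} to get $L_0 \models T$ with $\A \subseteq L_0$, an embedding $g' \colon \B \to L_0$ over $\C$ with $\A \cap g'(\B) = \C$; pass from $L_0$ to a member $\D$ of $\K$; and note that the set-theoretic intersection is unchanged and that $\A, g'(\B) \in \K$ (being isomorphic to $\A, \B$) are therefore strong submodels of $\D$ because $\preccurlyeq$ is just inclusion for an FCA-class.

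The one step you handle sketchily is the passage from $L_0 \models T$ to $\D \in \K$. The phrase ``routine closure argument, taking an ascending union of field extensions that successively adjoin roots of polynomials and extend the automorphisms, exactly as in Remark~\ref{perfect}'' is not actually routine: Remark~\ref{perfect} covers only $p$-th roots in characteristic $p$, where the unique $p$-th root admits a canonical commuting extension of the distinguished automorphisms. For arbitrary algebraic extensions, commuting lifts need not exist --- that is exactly the content of Example~\ref{quaternion} and the reason this paper is not about difference fields. What does work, and what the paper uses, is the observation that $T$ is a $\forall\exists$-theory, hence $L_0$ embeds into an existentially closed model of $T$, and any existentially closed model of $T$ is relatively algebraically closed (a polynomial over $\D$ with a root in some $\D' \models T$ extending $\D$ must already have a root in $\D$ by existential closedness). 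You should cite that argument rather than gesturing at Remark~\ref{perfect}. The remaining bookkeeping can also be trimmed: the detour through $acl_\sigma^{\D'}(L_0)$ via Remark~\ref{sulkeumaremark} is unnecessary (simply take $\D = \D'$), and the worry about ``merging $\A$ and $g'(\B)$ beyond $\C$'' is a non-issue, since $\A$ and $g'(\B)$ are fixed subsets of $L_0$ and their intersection is determined by those two sets alone, independently of the ambient model; the further observation that elements of $\D \setminus L_0$ are algebraic over $L_0$ is therefore not needed.
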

 
\begin{proof}  
Since $T$ is an $\forall \exists$-theory,
every model of $T$ embeds into an existentially closed model of $T$.
Since existentially closed models of $T$ are algebraically maximal models of $T$, the result follows from
Lemma \ref{AP0}. 
\end{proof}

\begin{remark}\label{freeam}
Note that we have an even stronger property than the disjoint amalgamation property: 
If $\A, \B, \C \in \K$ and there are strong embeddings $f: \C \to \A$ and $f': \C \to \B$,
then there is some $\D \in \K$ and strong embeddings $g: \A \to \D$ and $g': \B \to \D$
such that $g \circ f= g' \circ f'$ and $g(\A)$ and $g'(\B)$ are linearly disjoint (as fields) 
over $g \circ f(\C)$.
\end{remark}

\begin{lemma}\label{JEP}
If $\K$ is an FCA-class, then $\K$ has JEP.
\end{lemma}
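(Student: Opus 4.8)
The plan is to reduce JEP to the amalgamation property (more precisely, to Lemma~\ref{AP0}) by amalgamating over the common prime field $\A_0$. Recall that every member of an FCA-class $\K = \K_{\A_0}$ contains $\A_0 = acl_\sigma^\A(\emptyset)$ as a substructure, and by Remark~\ref{sulkeumaremark} we know $\A_0 \in \K_{\A_0}$. So given any $K_1, K_2 \in \K$, we have $K_1, K_2 \in \K$ with strong embeddings $\A_0 \hookrightarrow K_1$ and $\A_0 \hookrightarrow K_2$ (the inclusions), and JEP follows at once from disjoint amalgamation (Lemma~\ref{AP}) applied over $\A_0$.

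The one thing that needs checking is that the two copies of $\A_0$ sitting inside $K_1$ and $K_2$ really are \emph{the same} substructure, i.e.\ that $acl_\sigma^{K_1}(\emptyset) = acl_\sigma^{K_2}(\emptyset) = \A_0$ as $L$-structures, so that the inclusion maps genuinely agree on $\A_0$. This is where I expect the only (mild) subtlety to lie. The point is that $\A_0$ was fixed once and for all in Definition~\ref{class} as $acl_\sigma^\A(\emptyset)$ for some ambient relatively algebraically closed model $\A$, and every $K_i \in \K_{\A_0}$ is \emph{required} to contain this fixed $\A_0$ as a substructure; moreover, since $\A_0$ is itself relatively algebraically closed in $K_i$ (it is relatively algebraically closed as a model of $T$, hence relatively algebraically closed as a field in any extension, by Definition~\ref{relalg}) and closed under the distinguished automorphisms, minimality forces $acl_\sigma^{K_i}(\emptyset) = \A_0$. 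So the prime field is literally the same object, with the same $L$-structure, in both $K_1$ and $K_2$.

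With that in hand I would finish as follows: apply Lemma~\ref{AP0} with $K_0 := \A_0$ (which is relatively algebraically closed as a field in $K_1$, and a model of $T$), obtaining some $L \models T$ with $K_1 \subseteq L$ and an embedding $f\colon K_2 \to L$ fixing $\A_0$. Then embed $L$ into an existentially closed model of $T$ (possible since $T$ is $\forall\exists$), which is a relatively algebraically closed model of $T$ containing $\A_0$, hence lies in $\K$; call it $\C$. The inclusion $K_1 \hookrightarrow \C$ and the composite $K_2 \xrightarrow{f} L \hookrightarrow \C$ are strong embeddings (strong embeddings between members of $\K$ are just field-theoretic embeddings respecting the structure, since $\preccurlyeq$ is $\subseteq$), witnessing JEP. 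The main obstacle, as noted, is purely bookkeeping about the prime field; there is no real mathematical content beyond Lemma~\ref{AP0}.
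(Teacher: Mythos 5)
Your proof is correct and follows the same route as the paper's: since $\A_0$ is contained in every member of $\K=\K_{\A_0}$, JEP reduces to amalgamating over $\A_0$, which is Lemma~\ref{AP} (the paper's one-line proof is exactly this). Your care about the ``bookkeeping'' subtlety---that the copy of $\A_0$ inside $K_1$ and inside $K_2$ is literally the same structure---is well-placed and your resolution is sound: $acl_\sigma^{K_i}(\emptyset)$ equals the relative algebraic closure of the prime field in $K_i$, and since $\A_0\subseteq K_i$ is relatively algebraically closed (Remark~\ref{sulkeumaremark}) and contains the prime field, this is exactly $\A_0$. The only stylistic remark is that you re-derive part of Lemma~\ref{AP} by going through Lemma~\ref{AP0} and then embedding into an existentially closed model; since Lemma~\ref{AP} already records that $\K$ has (disjoint) amalgamation, you could simply invoke it with $\C=\A_0\in\K$ and the inclusions $\A_0\preccurlyeq K_1$, $\A_0\preccurlyeq K_2$ as the two strong embeddings.
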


\begin{proof}
Since every model in $\K$ contains the model $\A_0$,  
this is a straightforward consequence of Lemma \ref{AP}.
\end{proof}

We will be working in an FCA-class which we, from now on, denote by $\K$.
Since it has AP and JEP, 
there is, for each cardinal $\k$, a $\k$-universal and $\k$- model homogeneous monster model.
We will assume we are working in such a monster model for $\k$ large enough for the monster to contain all models we are interested in.
All elements of  $\K$ (even if small) are not contained in $\M$ but $\M$ will contain isomorphic copies of them.
As is standard practice in model theory, we will assume that the models we consider are strong submodels of $\M$.
This is analogous to the fact that for the purpose of studying countable fields of characteristic $0$, we can consider them as subfields (and, moreover, elementary submodels) of the field of complex numbers even though there, strictly speaking, are proper class many such fields and all of them are not technically subsets of the complex field.  
 
We note that $\M$ is existentially closed in $\K$ (i.e. if there is some larger model $\M' \in \K$
such that $\M \subseteq \M'$, then $\M$ is existentially closed in $\M'$.
Now we can define Galois types as orbits of automorphisms of the monster model. 
If $A \subseteq \M$,
we will denote the set of automorphisms of $\M$ 
that fix $A$ pointwise with $Aut(\M/A)$.
Occasionally, we will be working in the pure field language in a monster model for algebraically closed fields,
which we will denote by $\mathbb{F}$.
We can take it to be the algebraic closure (as a field) of $\M$.

Now, we can use results from the previous section.
We refer the reader to Definitions \ref{eclos} and \ref{eclos2} for $\E^+$-types.
 
\begin{lemma}\label{etyypitmaar}
Let $\E$ be the collection of formulae of the form
$x^n+\sum_{k=0}^{n-1}t_k(\bar{y})x^k=0$,
where $n>0$, and each $t_k$ is an $L$-term.
Then, $\K$ is homogeneous,
and Galois types coincide with $\E^+$-types in every model $\A \in \K$.
Moreover, if $\A \in \K$ is an existentially closed model then, Galois types realized in $\A$
are the same as existential types in $\A$. 
\end{lemma}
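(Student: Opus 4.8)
The plan is to deduce Lemma~\ref{etyypitmaar} directly from the abstract machinery set up in Section~2, once we check that the specific collection $\E$ of polynomial-root formulae satisfies the hypotheses used there. First I would verify that $\E$ (formulae of the form $x^n+\sum_{k=0}^{n-1}t_k(\bar y)x^k=0$ with $n>0$ and $t_k$ an $L$-term) meets the requirements of Definition~\ref{eclos}: each such formula is quantifier free, for any $\A\in\K$ and any parameter tuple $\bar b\in\A$ the solution set $\phi(\A,\bar b)$ has at most $n$ elements (a nonzero polynomial over a field has finitely many roots), and the formula $x=y$ is the case $n=1$, $t_0(\bar y)=-y$. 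So $\E$ is a legitimate collection in the sense of Section~2, and $\E^+$ is defined from it via Definition~\ref{eclos2}.

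Next I would check the central hypothesis of Corollaries~\ref{egalois} and \ref{multiremark}, namely that for all $\A\in\K$ and $A\subseteq\A$ we have $cl_\A(A)=\E\textrm{-}cl_\A(A)$. The excerpt already records that $\K$ is a multiuniversal AEC with $cl_\A(A)=acl_\sigma^\A(A)$ (this follows from Remark~\ref{sulkeumaremark}), so it suffices to show $acl_\sigma^\A(A)=\E\textrm{-}cl_\A(A)$. One inclusion: if $a\in\E\textrm{-}cl_\A(A)$, then $a$ satisfies a nonzero monic polynomial with coefficients that are $L$-terms in parameters from $A$; such coefficients lie in the subfield of $\A$ generated by $A$ and closed under the $\sigma_i$ and their inverses, hence in $acl_\sigma^\A(A)$, and then $a$, being algebraic over $acl_\sigma^\A(A)$ inside $\A$, lies in $acl_\sigma^\A(A)$ by relative algebraic closedness. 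The reverse inclusion: $acl_\sigma^\A(A)$ is obtained by alternately closing under $\sigma_1^{\pm1},\dots,\sigma_n^{\pm1}$ and taking relative algebraic closure in $\A$; every element produced at each stage satisfies an $\E$-formula over the previous stage — for the automorphism steps, $\sigma_i(t)$ satisfies $x-\sigma_i(t)=0$ and more relevantly elements of the generated field satisfy linear $\E$-formulae with term coefficients, and for the relative-algebraic-closure steps we get genuine polynomial $\E$-formulae. Since $\E\textrm{-}cl_\A$ need not be idempotent on its face, I would note that in the proofs of Section~2 (e.g.\ Lemma~\ref{etyypit}) the idempotence $cl_\A(cl_\A(\bar b))=cl_\A(\bar b)$ is what is actually used, and that holds because $cl_\A=acl_\sigma^\A$ genuinely is a closure operator; combined with the two inclusions this gives $\E\textrm{-}cl_\A(A)=acl_\sigma^\A(A)=cl_\A(A)$ exactly, as required.

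With that identity in hand, the conclusions follow mechanically: $\K$ has AP and JEP by Lemmas~\ref{AP} and \ref{JEP}, so Corollary~\ref{egalois} applies and gives that $\K$ is homogeneous; Lemma~\ref{etyypit} gives that Galois types over $\emptyset$ coincide with $\E^+$-types; and since types over a set $A$ reduce to types over $\emptyset$ after naming $A$ (and $\E$ is still a suitable collection after adding constants for $A$), Galois types coincide with $\E^+$-types in every model $\A\in\K$ over arbitrary parameter sets. Finally, Corollary~\ref{multiremark} gives that if $\A\in\K$ is existentially closed then Galois types realized in $\A$ equal existential types in $\A$. The main obstacle, I expect, is the bookkeeping in the inclusion $acl_\sigma^\A(A)\subseteq\E\textrm{-}cl_\A(A)$: one must see that closing a field under the commuting automorphisms (and their inverses) and under relative algebraic closure never produces an element that fails to satisfy one of the monic polynomial-with-$L$-term-coefficients formulae over $A$ — the subtlety being that $L$-terms already incorporate the $\sigma_i$, so the "field generated and automorphism-closed" part is absorbed into the term coefficients, and only the genuinely algebraic elements need the higher-degree $\E$-formulae. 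Once this is spelled out, everything else is a citation of Section~2.
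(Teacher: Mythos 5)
Your proposal is correct and takes essentially the same route as the paper: the paper's proof consists precisely of observing that $\E\textrm{-}cl_\A(A)=acl_\sigma^\A(A)=cl_\A(A)$ (which it dismisses as "easy to see"), then citing Lemmas~\ref{AP} and \ref{JEP} for AP/JEP, Corollary~\ref{egalois} for homogeneity, and Lemma~\ref{etyypit} with Corollary~\ref{multiremark} for the type identifications. Your write-up just unpacks that "easy to see" claim — the two inclusions, the observation that $L$-terms already absorb the $\sigma_i^{\pm1}$, and the verification that $\E$ meets Definition~\ref{eclos} — which is more detail than the paper gives but not a different argument.
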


\begin{proof}
It is easy to see that for any $\A \in \K$ and $A \subseteq \A$,
$\E$-$cl_\A(A)=acl_\sigma^\A(A)$.
By Lemmas \ref{AP} and \ref{JEP}, $\K$ has AP and JEP, 
so it is homogeneous by Corollary \ref{egalois}.
The other statements follow from Lemma \ref{etyypit}
and Corollary \ref{multiremark}. 
\end{proof}

In particular, Lemma \ref{etyypitmaar} implies that first order types determine Galois types.
For the rest of the paper, we will use existential types as our main notion of type.
We will denote by $S(A)$ the set of existential types over $A$
in the sense of Definition \ref{etyypmaar}
(note that these types $p$ will then be complete in the sense that if $\phi(x)$
is an existential formula with parameters from $A$,
then either $\phi(x) \in p$ or $\neg \phi(x) \in p$).
If $p=tp_\exists^\M(a/A)$, we say $a$ \emph{realises} $p$ or is a \emph{realisation} of $p$.
We say a type is \emph{consistent} if it has a realisation in $\M$.
For the rest of this paper, we will write just $tp_\exists(a/A)$ for $tp_\exists^\M(a/A)$
and $tp^g(a/A)$ for $tp^g(a/A;\M)$.

In AEC frameworks, \emph{bounded closure} is often used as a counterpart for model theoretic algebraic closure.
We say a set $A \subseteq \M$ is \emph{bounded} if $\vert A \vert < \vert \M \vert$,
and a singleton $a \in \M$ is in the \emph{bounded closure} of a set $A \subseteq \M$
if $tp^g(a/A)$ has only boundedly many realisations.
In our setting, boundedness will actually be equivalent with finiteness,
as we shall soon see.

In \cite{ChHr}, it is shown that in models of ACFA, the field theoretic and model theoretic notions of algebraic closure over a substructure coincide (Proposition (1.7)).
The same line of reasoning works also in our setting.
Our analogue for their result is formulated in the following lemma.
It also implies that in our setting, a type has boundedly many realisations if and only if it has finitely many realisations.

\begin{lemma}\label{modalg}
Let $A \subseteq \M$ and let $a=(a_1, \ldots, a_n) \in \M$ be a finite tuple.
\begin{enumerate}[(i)]
\item If $A \models T$,
then $tp^g(a/A)$ has finitely (boundedly) many realisations if and only if $a_i \in A^{alg}$ for $i=1, \ldots, n$.
\item The type $tp^g(a/A)$ has boundedly many realisations if and only if $a \in acl_\sigma(A)$.
\end{enumerate}
\end{lemma}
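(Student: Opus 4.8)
The plan is to prove both parts by adapting the argument of Proposition (1.7) in \cite{ChHr} to our setting, using Lemma \ref{AP0} (and its consequence Remark \ref{sulkeumaremark}) as the main amalgamation tool. First I would deal with part (i). The easy direction is that if each $a_i \in A^{alg}$, then there is a polynomial over $A$ that $a_i$ satisfies, and any automorphism of $\M$ fixing $A$ must send $a_i$ to one of the finitely many roots of that polynomial, so $tp^g(a/A)$ has finitely many realisations; in particular it has boundedly many. For the converse, suppose some $a_i \notin A^{alg}$. Since $A \models T$, we may consider the field $A(a)$ inside $\M$. I would use Lemma \ref{AP0} repeatedly: because $a_i$ is transcendental over $A^{alg}$ (or at least not in $A^{alg}$), I can build, for any cardinal $\lambda < |\M|$, a chain of $\lambda$ many $L$-structures extending $A$, each containing a ``fresh'' copy of a conjugate of $a$ over $A$, amalgamated so that they remain algebraically independent over $A^{alg}$; embedding everything back into $\M$ by $\k$-universality and $\k$-model homogeneity produces $\lambda$ many distinct realisations of $tp^g(a/A)$. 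Hence $tp^g(a/A)$ is unbounded, which is the contrapositive.

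For part (ii), the strategy is to reduce to part (i) by passing to $A' = acl_\sigma(A)$. Note $A' \models T$ (it is a model by Remark \ref{sulkeumaremark}), and $A \subseteq A' \preccurlyeq \M$. If $a \in acl_\sigma(A) = A'$, then trivially $a \in (A')^{alg}$, so by part (i), $tp^g(a/A')$ has finitely many realisations, and therefore $tp^g(a/A)$ does too (any realisation of the latter over the larger base set is a realisation of the former... more carefully, restriction of a Galois type to a smaller set can only increase the realisation set, so I instead argue: $a \in A'$ means $a$ has only finitely many conjugates under $Aut(\M/A)$ because $A'$ is obtained from $A$ by closing under the distinguished automorphisms—which commute with any element of $Aut(\M/A)$—and then under relative algebraic closure, and the field-algebraic conjugates of an element are finite). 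Conversely, suppose $a \notin acl_\sigma(A)$, i.e. at least one coordinate $a_i \notin A'$. Then $a_i \notin (A')^{alg}$ (since $A'$ is relatively algebraically closed in $\M$), so by the converse direction of part (i) applied over the base $A'$, the type $tp^g(a/A')$ has unboundedly many realisations; since $A'$ has bounded cardinality, $tp^g(a/A)$ also has unboundedly many realisations (each automorphism in $Aut(\M/A')$ lies in $Aut(\M/A)$).

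The cleanest way to organise the unboundedness construction, which I expect to be the main obstacle, is this: given $a$ with $a_i \notin A'^{alg}$ for some $i$ and a target cardinal $\lambda$, iteratively apply Lemma \ref{AP0} with $K_1$ the current model, $K_2$ a fresh isomorphic copy of $\mathrm{acl}_\sigma^\M(A a)$ over $A'$, and $K_0 = A'$ (noting $A'$ is relatively algebraically closed as a field in $K_1$ at every stage, which is why we passed to $A'$); take unions at limits using the AEC chain axioms; finally embed the resulting model of size $<|\M|$ into $\M$ over $A$. The images of the distinguished copies of $a_i$ are pairwise distinct because the copies were algebraically independent over $A'^{alg}$, so we obtain $\lambda$ distinct realisations of $tp^g(a/A)$, all having the same Galois type over $A$ since they are related by strong embeddings fixing $A$. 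The delicate point requiring care is maintaining that $K_0 = A'$ stays relatively algebraically closed in the growing $K_1$ throughout the transfinite iteration, so that Lemma \ref{AP0} keeps applying; this is where perfectness (Remark \ref{perfect}) and Fact \ref{algebra} do the real work, exactly as in the proof of Lemma \ref{AP0} itself.
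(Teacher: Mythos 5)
Your proposal is essentially correct and follows the same approach as the paper's proof: the boundedness direction is a root-counting argument, the unboundedness direction builds many algebraically independent conjugates via repeated amalgamation (Lemma \ref{AP0}/Lemma \ref{AP}), and part (ii) reduces to part (i) by passing to $\mathrm{acl}_\sigma(A)$ and using $Aut(\M/\mathrm{acl}_\sigma(A)) \subseteq Aut(\M/A)$. The only organizational difference is that the paper first proves (i) for singletons and then handles tuples by projecting the realisation set onto coordinates, whereas you amalgamate copies of $\mathrm{acl}_\sigma(Aa)$ for the full tuple $a$ at once; both work, and both treat the transfinite amalgamation at roughly the same level of informality (you at least flag the preservation of the hypotheses of Lemma \ref{AP0} as the delicate point).
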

 
\begin{proof}
In (i), the implication from right to left is clear.
We prove the other direction first in the case $a$ is a singleton.
Suppose $a$ is transcendental over $A$.
Let $b \in \M$ be another transcendental element.
Then $A(a)$ and $A(b)$ are isomorphic as fields,
and thus $A(b)$ can be extended to a model of $T$ 
that is isomorphic with $acl_\sigma(A(a))$.
By Lemma \ref{AP}, they both can be embedded linearly disjointly in some model of $T$. 
This process can be repeated unboundedly many times.
For the general case, we note that if $tp^g(a_1, \ldots, a_n/A)$
has finitely (boundedly) many realisations, 
then so does $tp^g(a_i/A)$ for $1=1, \ldots, n$,
and hence, since the claim holds for singletons,
we have $a_i \in A^{alg}$ for each $i$.

In (ii), the direction from right to left is again clear,
and it suffices to prove the other direction in case $a$ is a singleton. 
%Indeed, if $a=(a_1, \ldots, a_n)$ and $tp^g(a/A)$
%has only boundedly many realisations,
%then also $tp^g(a_i/A)$ has only boundedly many realisations for $i=1, \ldots, n$.
Denote  $\A=acl_\sigma(A)$,
and assume $a \notin \A$.
By (i), $tp^g(a/\A)$ (and thus $tp^g(a/A)$)
has unboundedly many realisations.
\end{proof}

\section{Independence}
 
\noindent
Exactly like in 
\cite{ChHr}, we define an independence notion that is based on independence in pure fields and inherits most properties of non-forking from there.
At the end of this section, we will present our version of the independence theorem.
However, we will first show that our independence notion has all the properties of non-forking that we would expect in a simple unstable setting. 
In the next section, they will be used, together with the independence theorem,
to show that a monster model of $\K$ is simple in the sense of \cite{bl}.

\begin{definition}\label{independence}
Let $A, B, C \subseteq \M$.
We say $A$ is \emph{independent} from $B$ over $C$,
denoted $A \da_C B$, if $acl_\sigma(A,C)$ is algebraically independent (as a field)
from $acl_\sigma(B, C)$ over $acl_\sigma(C)$.
\end{definition}

Many of the usual properties of an independence notion follow directly from the corresponding properties for fields.
To see that local character holds, we recall some notions from difference algebra.
 
\begin{definition}
Let $K \models T$, and let $G$ denote the free abelian group generated by the distinguished automorphisms.
Let $R=K\{x_1, \ldots, x_n\}$
be the ring of difference polynomials in  
$n$ variables (i.e. polynomials in the variables $\tau(x_i)$, where $\tau \in G$, $1 \le i \le n$).
We say an ideal of $R$ is a \emph{difference ideal} if it is closed under the distinguished automorphisms.
We say a difference ideal $I$ is \emph{perfect}, if for all $p \in R$,
$\tau_1, \ldots, \tau_r \in G$, and $k_1, \ldots, k_r \in \mathbb{N}$, it holds that if
$\tau_1(p)^{k_1} \cdots \tau_r(p)^{k_r} \in I$, then $p \in I$.
\end{definition}

\begin{lemma}\label{finchar}
Let $a \in \M$ be a finite tuple and $A \subseteq \M$.
Then, there is some finite $A_0 \subseteq A$ 
such that $a \da_{A_0} A$.
\end{lemma}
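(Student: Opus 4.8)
\textbf{Proof plan for Lemma \ref{finchar}.}

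The plan is to reduce this to the classical local character of algebraic independence in pure fields via the difference-algebra notion of a perfect difference ideal. First I would fix the tuple $a$ and consider the perfect difference ideal $I$ of difference polynomials over $acl_\sigma(A)$ vanishing at $a$; this encodes the type of $a$ over $acl_\sigma(A)$ in the field-with-operators sense. The key algebraic input is that every perfect difference ideal is the perfect closure of a \emph{finitely generated} difference ideal (a Ritt–Raudenbush type basis theorem for perfect difference ideals; this is the standard analogue in difference algebra that we may invoke, and the role of $\E$ and the relatively algebraically closed hypothesis guarantees we stay inside models of $T$). Concretely, there are finitely many difference polynomials $p_1,\dots,p_m$ with coefficients in $acl_\sigma(A)$ generating $I$ as a perfect difference ideal. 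Let $A_0 \subseteq A$ be a finite set whose $acl_\sigma$-closure contains all the coefficients appearing in $p_1,\dots,p_m$; since $acl_\sigma(A_0)$ is built by closing a finite set under the finitely many $\sigma_i,\sigma_i^{-1}$ and relative algebraic closure, and each coefficient lies in $acl_\sigma(A)$ hence (by the analogue of Fact 2.2 / finite character of $acl_\sigma$) in $acl_\sigma$ of some finite subset of $A$, such a finite $A_0$ exists.

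Next I would argue that $a \da_{A_0} A$, i.e.\ that $acl_\sigma(aA_0)$ is algebraically independent from $acl_\sigma(A)$ over $acl_\sigma(A_0)$. The point is that the isomorphism type of the difference field $acl_\sigma(aA_0)$ over $acl_\sigma(A_0)$ — and in particular the (field-theoretic) transcendence structure of $a$ together with all its transforms $\tau(a_i)$ over $acl_\sigma(A_0)$ — is already determined by the ideal generated by $p_1,\dots,p_m$, because those polynomials already witness all the algebraic relations among $a$ and its transforms over $acl_\sigma(A)$, and they have coefficients in $acl_\sigma(A_0)$. Hence no new algebraic dependence between $acl_\sigma(aA_0)$ and $acl_\sigma(A)$ can appear over $acl_\sigma(A_0)$ beyond what already holds over $acl_\sigma(A_0)$ itself; translated into the language of fields, the generators of $acl_\sigma(aA_0)$ that are transcendental over $acl_\sigma(A_0)$ remain transcendental over $acl_\sigma(A)$, which is exactly algebraic independence of $acl_\sigma(aA_0)$ from $acl_\sigma(A)$ over $acl_\sigma(A_0)$.

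The main obstacle I anticipate is the bookkeeping in the middle step: one must pass cleanly between (a) the difference-ideal description of the type of $a$, (b) the finite-generation statement, and (c) the field-theoretic algebraic-independence statement in Definition \ref{independence}, making sure at each stage that the relevant closures ($acl_\sigma$ over $A_0$, over $A$, and $K^{alg}$ inside $\mathbb{F}$) interact correctly and that we never leave a model of $T$ — here Lemma \ref{AP0} / Remark \ref{sulkeumaremark} and perfectness (Remark \ref{perfect}) are what keep everything inside $\K$. A secondary point requiring care is the appeal to finite character of $acl_\sigma$ used to find $A_0$: one should note that since $acl_\sigma$ is generated from a set by the finitely many operators and the relative algebraic closure operation, any single element of $acl_\sigma(A)$ lies in $acl_\sigma$ of a finite subset of $A$, and a finite union of such finite subsets does the job. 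Once these reductions are in place, the independence assertion is immediate from the corresponding local character for algebraic independence in fields, where it is completely standard that algebraic independence over a base is witnessed by a finite sub-base.
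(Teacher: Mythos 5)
Your proposal follows essentially the same route as the paper: both reduce local character to the Ritt--Raudenbush basis theorem for perfect difference ideals (the paper invokes it in its ascending-chain-condition form, citing Levin; you invoke the equivalent finite-generation form), applied to the perfect difference ideal of difference polynomials over $acl_\sigma(A)$ vanishing at $a$, and then pull the finitely many relevant coefficients back into a finite $A_0 \subseteq A$ using finite character of $acl_\sigma$. The concluding step from the ideal coincidence to $a \da_{A_0} A$ is left largely implicit in the paper as well, so your sketch of that step is if anything slightly more explicit than the source.
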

 
\begin{proof}
Suppose $a$ is an $n$-tuple, i.e. $a \in \M^n$.
Denote $L_A=acl_\sigma(A)$.
Let $I$ be the difference ideal of $L_A\{x_1, \ldots, x_n\}$ generated by the set
$\{p \in L_A\{x_1, \ldots, x_n\} \, | \, p(a)=0\}$.
Now, $I$ is a perfect difference ideal, and by Proposition 2.5.4 of \cite{levin},
there is some finite set $F \subseteq L_A\{x_1, \ldots, x_n\}$
such that $I$ is the smallest perfect difference ideal containing $F$.
Let $B \subseteq L_A$ ($=acl_\sigma(A)$) be the finite set of coefficients of the elements of $F$.
Then, $a \da_B A$, and thus $a \da_B L_A$.

Now, there is some finite $A_0 \subseteq A$ such that $B \subseteq acl_\sigma(A_0)$.
We have $a \da_{A_0} acl_\sigma(A_0)$ and thus $a \da_{A_0} B \cup A_0$.
Since $B \cup A_0 \subseteq L_A$, transitivity for field independence gives us
$a \da_{A_0} L_A$ and thus $a \da_{A_0} A$.
\end{proof}

We now see that our independence notion has all the properties of non-forking that we would expect in a simple unstable setting.

\begin{lemma}\label{nonforkprop}
Let $\M$ be a monster model for $\K$, and suppose $A \subseteq B \subseteq C \subseteq D \subseteq \M$.
Then, the following hold: 
\begin{enumerate}[(i)]
\item (Local character)
For each finite tuple  $a \in \M$, there is some finite $A_0 \subseteq A$ such that $a \da_{A_0} A$. 
\item (Finite character)
If $\bar{a} \in \M$ is a (possibly infinite) tuple and $\bar{a} \nda_A B$, then there is some finite tuple $b \in B$ such that $\bar{a} \nda_A b$.
\item (Extension)
For every (possibly infinite) $\bar{a} \in \M$,
there is some $\bar{b} \in \M$ such that $tp_\exists(\bar{b}/A)=tp_\exists (\bar{a}/A)$
and $\bar{b} \da_A B$.  
\item (Monotonicity)
If $\bar{a} \in \M$ is a (possibly infinite) tuple, and $\bar{a} \da_A D$, then $\bar{a} \da_B C$.
\item (Transitivity)
If $\bar{a} \in \M$ is a (possibly infinite) tuple, $\bar{a} \da_A B$ and $\bar{a} \da_B C$, then $\bar{a} \da_A C$.
\item (Symmetry)
If $\bar{a}, \bar{b} \in \M$ are (possibly infinite) tuples and $\bar{a} \da_A \bar{b}$, then $\bar{b} \da_A \bar{a}.$
\item (Invariance)
If $f$ is an automorphism of $\M$, $A', B', C' \subseteq \M$ and $A' \da_{B'} C'$,
then $f(A') \da_{f(B')} f(C')$.
\end{enumerate}
\end{lemma}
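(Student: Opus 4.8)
The plan is to derive each of the six properties from the corresponding fact about algebraic independence of fields, using throughout the fact that for $A \subseteq \M$ the set $acl_\sigma(A)$ is a well-defined subfield of $\M$ (indeed a submodel, by Remark \ref{sulkeumaremark}) and that $acl_\sigma$ is monotone and idempotent. Property (i) is exactly Lemma \ref{finchar}, so nothing new is needed there. For (iv) Monotonicity, note that $A \subseteq B \subseteq C \subseteq D$ gives $acl_\sigma(B,C)=acl_\sigma(C) \subseteq acl_\sigma(C,D)$ etc.; since $acl_\sigma(\bar a, A) \subseteq acl_\sigma(\bar a, B)$ sits inside $acl_\sigma(\bar a, D)$ and $acl_\sigma(B) \subseteq acl_\sigma(C) \subseteq acl_\sigma(D)$, algebraic independence of $acl_\sigma(\bar a, A)$ from $acl_\sigma(D)$ over $acl_\sigma(A)$ restricts (by the monotonicity of algebraic independence in fields, moving the base up from $acl_\sigma(A)$ to $acl_\sigma(B)$ and shrinking both sides) to algebraic independence of $acl_\sigma(\bar a, B)$ from $acl_\sigma(C)$ over $acl_\sigma(B)$, which is $\bar a \da_B C$. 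Here I should be slightly careful: moving the base up in field algebraic independence requires knowing that $acl_\sigma(B)$ is algebraically independent from $acl_\sigma(\bar a, A) \cdot acl_\sigma(D)$-relevant pieces, but this is automatic since $acl_\sigma(B) \subseteq acl_\sigma(D)$ is on the "$D$ side." For (v) Transitivity and (vi) Symmetry, I invoke the transitivity and symmetry of algebraic independence of fields directly: $\bar a \da_A B$ says $acl_\sigma(\bar a A)$ is free from $acl_\sigma(B)$ over $acl_\sigma(A)$, $\bar a \da_B C$ says $acl_\sigma(\bar a B)$ is free from $acl_\sigma(C)$ over $acl_\sigma(B)$, and transitivity of field independence yields $acl_\sigma(\bar a A)$ (a fortiori) free from $acl_\sigma(C)$ over $acl_\sigma(A)$; symmetry is immediate because algebraic independence of fields is a symmetric relation and $acl_\sigma(\bar a \bar b \ldots)$ unpacks symmetrically.

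For (ii) Local character: if $\bar a \nda_A B$, then $acl_\sigma(\bar a A)$ is \emph{not} algebraically independent from $acl_\sigma(B)$ over $acl_\sigma(A)$, so there is a finite tuple $\bar c$ from $acl_\sigma(\bar a A)$ that is algebraically dependent on $acl_\sigma(B)$ over $acl_\sigma(A)$ — witnessed by a single polynomial equation with finitely many coefficients from $acl_\sigma(B)$, each of which lies in $acl_\sigma(b_0)$ for some finite $b_0 \subseteq B$ (using that $acl_\sigma$ of an element of $acl_\sigma(B)$ is reached from finitely many elements of $B$, i.e. the finite character of $acl_\sigma$). Then $\bar a \nda_A b_0$. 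The only subtlety is making sure the finitely many coefficients all come from a single finite $b_0$; this follows because finitely many elements of $acl_\sigma(B)$ each have a finite "support" in $B$ and we union those supports.

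For (iii) Extension: given $\bar a$ and $A \subseteq B$, we want $\bar b$ with $tp_\exists(\bar b/A) = tp_\exists(\bar a/A)$ and $\bar b \da_A B$. The idea, following \cite{ChHr}, is to work first in the pure-field monster $\mathbb{F}$: take a copy of $acl_\sigma(\bar a A)$ that is algebraically independent from $acl_\sigma(B)$ over $acl_\sigma(A)$ inside $\mathbb{F}$ (possible by moving via a field automorphism of $\mathbb{F}$ fixing $acl_\sigma(A)$), giving a field $K_1 \cong acl_\sigma(\bar a A)$ with the image $\bar b$ of $\bar a$, linearly disjoint from $acl_\sigma(B)$ over $acl_\sigma(A)$ (linear disjointness follows from algebraic independence over the relatively algebraically closed, hence — by Remark \ref{perfect} — perfect field $acl_\sigma(A)$, via Fact \ref{algebra}). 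Then Lemma \ref{tensor}/Lemma \ref{AP0}-style amalgamation equips the composite with commuting automorphisms extending those of $K_1$ and of $acl_\sigma(B)$, producing a model $L \models T$ containing $\bar b$ and $B$ with $\bar b \da_A B$; embedding $L$ into $\M$ strongly (by $\k$-universality, after relatively-algebraically-closing) we may take $L \preccurlyeq \M$. Finally $tp_\exists(\bar b/A) = tp_\exists(\bar a/A)$: the isomorphism $acl_\sigma(\bar a A) \to acl_\sigma(\bar b A)$ over $A$ is an $L$-isomorphism of submodels, hence by Lemma \ref{etyypitmaar} (Galois types = $\E^+$-types, and existential types are finer only up to this — more precisely, an $L$-isomorphism of submodels fixing $A$ preserves Galois types, and since $\M$ is existentially closed and $\k$-homogeneous this isomorphism extends to an automorphism of $\M$ fixing $A$, which preserves existential types).

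I expect Extension (iii) to be the main obstacle, precisely because it is the one clause that cannot be read off from field independence alone — it requires re-running the amalgamation machinery of Lemma \ref{AP0} in a way that simultaneously realizes the prescribed type and achieves field-algebraic independence, and requires care to see that the resulting field-isomorphism over $A$ genuinely translates into equality of \emph{existential} types in $\M$ (this is where Lemma \ref{etyypitmaar} and the homogeneity/existential-closedness of $\M$ are essential). The remaining clauses are routine transcriptions of properties of algebraic independence of fields.
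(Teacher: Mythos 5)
Your proposal is correct and follows essentially the same route as the paper: (i) is cited from Lemma \ref{finchar}, the field-theoretic properties (ii), (iv), (v), (vi) are transcribed from algebraic independence of fields, and (iii) is proved via the tensor-product amalgamation of Lemma \ref{tensor} over the perfect, relatively algebraically closed base $acl_\sigma(A)$ (with Fact \ref{algebra} converting algebraic independence into linear disjointness). The paper's proof is actually terser than yours -- it dismisses (ii), (iv), (v), (vi) in one sentence and gives only a three-line sketch of (iii) -- so the extra care you take (especially the final step of (iii), checking that the field isomorphism over $A$ extends, via model-homogeneity of $\M$ and Lemma \ref{etyypitmaar}, to an automorphism of $\M$ preserving existential types, and the finite-support argument in (ii)) is exactly the detail the paper leaves implicit, and it is correct.
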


\begin{proof}
(i) is Lemma \ref{finchar}.

For (iii),
we may (by the definition of our independence relation) without loss of generality assume that $acl_\sigma(A)=A$ and $acl_\sigma(B)=B$.
Let $A'$ be an isomorphic (over $A$) copy of $acl_\sigma(A,\bar{a})$
that is algebraically independent (in the field sense) from $B$ over $A$.
Since $A=acl_\sigma(A)$, the field $A$ is relatively algebraically closed in $A'$.
By Remark \ref{perfect}, it is also perfect, so the field extension $A'/A$ is separable and thus regular.
By Lemma 9.9 in \cite{friedjarden}, for regular field extensions, algebraic independence implies linear disjointness. 
Thus, $A'$ is linearly disjoint as a field from $B$ over $A$.
Let $\bar{a}'$ be the image of $\bar{a}$ in $A'$.
By Lemma \ref{tensor}, the distinguished automorphisms on $B$ and $A'$
have common extensions to $A' \otimes_A B$.
Now, there is an embedding $f: A' \otimes_A B \to \M$
such that $f(1 \otimes_A B)=B$, 
and $\bar{b}=f(\bar{a}' \otimes 1)$ is as wanted. 
 
The other properties follow straightforwardly from the fact that they hold for algebraic independence in fields.
\end{proof}

Next, we will prove a version of the independence theorem that allows us to amalgamate three types given that they satisfy certain conditions.
For difference fields, there is a Generalized Independence Theorem (\cite{ChHr}, p. 3009-3010) 
which makes it possible to simultaneously realise any finite number of types over a given base model as long as the tuples realising them are independent over that model. 
When proving the theorem, Chatzidakis and Hrushovski work largely in the setting of pure algebraically closed fields.
At one crucial point, they use the definability of types in $\o$-stable theories to move a tuple of parameters into the base model.
In our setting, models are not algebraically closed as fields,
and thus we do not even know if their theory is stable when we reduce to the pure field language,
so the proof from \cite{ChHr} does not generalise.
Instead, we need some extra assumptions, mainly that one of the types to be combined is nice in the sense of a technical definition that we will present next.
 
In the rest of the paper, we will be working at times in the field $\mathbb{F}=\M^{alg}$,
viewed as a monster model in the \emph{pure field language}.
Thus, when we talk about automorphisms of $\mathbb{F}$,
we mean \emph{field automorphisms} (without the extra structure of distinguished automorphisms),
and they will not necessarily restrict to automorphisms of $\M$.
 
In the following, we will be using the notion of a \emph{strong type}
in the context of $\mathbb{F}$.
By this, we mean the usual first order notion (see \cite{baldwin2}, p. 113, Definition 3.1).
We denote the strong type of $a$ over $A$ by $stp(a/A)$,
and whenever we use strong types, 
we do it in the context of the theory of algebraically closed fields. 
We recall that in this context, $stp(a/A)=stp(b/A)$ if and only if $tp(a/A^{alg})=tp(b/A^{alg})$,
where $tp$ stands for the first order type in the field language.
By a \emph{strong automorphism} of $\mathbb{F}$
over $A$ we mean an automorphism of $\mathbb{F}$
that fixes $A$ pointwise and preserves strong types over $A$
(in our case of algebraically closed fields this is the same as saying that the automorphism fixes
$A^{alg}$ pointwise).
We write $Saut(\mathbb{F}/A)$ 
for the set of all strong automorphisms over $A$. 

\begin{definition}
Let $\A \in \K$, let $x$ and $y$ be possibly infinite
sequences of variables, let $p(x,y) \in S(\A)$,
and suppose that if $(a,b)$ is a realisation of $p$,
then $a \da_\A b$.
Denote by $\mathbb{F}$ 
the algebraic closure (as a field) of $\M$,
viewed as a monster model in the pure field language.  
We say the type $p$ is \emph{nice} if for all finite sets $X \subseteq acl_\sigma(\A, a)$ and 
$Y \subseteq acl_\sigma(\A, a,b)$,
there is some $f \in Saut(\mathbb{F}/X)$ such that
$f(Y) \subseteq acl_\sigma(\A, a)$,
and $f(Y \cap acl_\sigma(\A, b)) \subseteq \A$.
\end{definition}

In the proof of our version of the independence theorem, the niceness assumption comes into play in the form of the following technical lemma. 
 
\begin{lemma}\label{nice}
Let $\A \in \K$, let $x$ and $y$ be possibly infinite sequences of variables, let $p(x,y) \in S(\A)$
be a nice type, and let $(a,b)$ be a realization of $p$.
Let $\B_1=acl_\sigma(\A, a)$, $\B_2=acl_\sigma(\A, b)$,
let $\B_{12}=acl_\sigma(\A, a,b)$,
let $E$ be the field composite of $\B_1$ and $\A^{alg}$,
and let $F$ be the field composite of $\B_2^{alg}$ and $\B_{12}$. 
Then, $E$ is relatively algebraically closed in $F$ (as a field).
\end{lemma}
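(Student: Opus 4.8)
The plan is to unwind what relative algebraic closure of $E$ in $F$ means concretely and then attack the contrapositive: suppose $\gamma \in F$ is algebraic over $E$; I want to show $\gamma \in E$. First I would set up the field diagram cleanly. Write $\B_1 = acl_\sigma(\A,a)$, $\B_2 = acl_\sigma(\A,b)$, $\B_{12} = acl_\sigma(\A,a,b)$; note $E = \B_1 \cdot \A^{alg}$ and $F = \B_2^{alg} \cdot \B_{12}$, all taken inside the pure-field monster $\mathbb{F} = \M^{alg}$. Since $(a,b)$ realises the nice type $p$, we have $a \da_\A b$, which by Definition \ref{independence} says $\B_1$ and $\B_2$ are algebraically independent over $\A$; in particular $\B_1$ and $\A^{alg}$ together are algebraically independent from $\B_2^{alg}$ over $\A^{alg}$ in a suitable sense, and I would make precise that $E$ and $\B_2^{alg}$ are linearly disjoint over $\A^{alg}$ (using Fact \ref{algebra} and Remark \ref{perfect} to get separability, exactly as in the proof of Lemma \ref{AP0}). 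The key structural fact to extract is that $F = \B_2^{alg} \cdot \B_{12}$ and that any element of $F$ is algebraic over $\B_1 \cdot \B_2 \cdot \A^{alg}$, since $\B_{12}$ is generated over $\B_1 \B_2$ (as $acl_\sigma(\A,a,b)$ is the relative algebraic closure of the compositum, plus closure under the $\sigma_i$, which already live inside).

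Next I would take $\gamma \in F$ algebraic over $E$ and use niceness to ``straighten'' it. Pick a finite tuple $Y \subseteq \B_{12}$ and a finite tuple from $\B_2^{alg}$ witnessing $\gamma$ and its minimal polynomial over $E$; let $X \subseteq \B_1$ be a finite tuple carrying the coefficients of that minimal polynomial (after clearing denominators, these lie in $\B_1$, possibly enlarging $X$ to include the relevant generators). Niceness gives $f \in Saut(\mathbb{F}/X)$ with $f(Y) \subseteq \B_1$ and $f(Y \cap \B_2) \subseteq \A$. Because $f$ is a strong automorphism over $X$, it fixes $X^{alg}$ pointwise, hence fixes (the relevant finitely generated piece of) $E$ pointwise, and it preserves the property of being a root of the minimal polynomial; so $f(\gamma)$ is again a root of the same minimal polynomial over $E$, and it now lies in the field generated over $E$ by $f(Y)$ and the images of the $\B_2^{alg}$-part. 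The point of niceness is that $f$ pushes the $\B_{12}$-coordinates into $\B_1 \subseteq E$ while pushing the part of $Y$ that was ``only'' in $\B_2$ down into $\A \subseteq E$; combined with the fact that $f$ does not move things that were genuinely algebraic over $\A$ too far, I want to conclude that $f(\gamma) \in E$. Since $E/\A^{alg}$ is a regular extension (linear disjointness from $\A^{alg}$-conjugates plus separability gives that $\A^{alg}$ is relatively algebraically closed in $E$, i.e. $E$ and $\A^{alg}$ are ``free''), the minimal polynomial of $\gamma$ over $E$ actually has coefficients in $\B_1$ after the normalisation, and $\gamma, f(\gamma)$ are conjugate over $E$; a counting/Galois argument over $E$ then forces $\gamma \in E$.

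The cleanest way to organise the last step is probably: (1) reduce to $\gamma$ separable over $E$ (automatic, since $\M$ and hence everything in sight is perfect by Remark \ref{perfect}); (2) show the minimal polynomial $q(T)$ of $\gamma$ over $E$ has all its coefficients in $\B_1$ — this uses that $E = \B_1 \cdot \A^{alg}$ with $\B_1$ and $\A^{alg}$ linearly disjoint over $\A$, so the Galois action of $Aut(\A^{alg}/\A)$ fixes $q$ (it fixes $\gamma$'s relevant conjugates set-wise because those come from $\B_{12}$ and $\B_2^{alg}$, which are $\A$-``independent'' of the $\A^{alg}$ direction), forcing the coefficients into $\B_1$; (3) apply $f \in Saut(\mathbb{F}/X) \supseteq Saut(\mathbb{F}/\B_1)$-ish: actually $f$ fixes only $X$, so I'd be careful and instead argue that $f(\gamma)$ is a root of $q$, hence $f(\gamma) \in E$ provided $f(\gamma) \in \B_1 \cdot \B_2^{alg}$-small enough, and then observe $f(\gamma) \in E$ together with $[\,E(\gamma):E\,] = [\,E(f(\gamma)):E\,] = 1$ forces $\gamma \in E$ because $f$ fixes $E \cap \B_1^{alg}$... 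The honest statement is that I would match this to the argument structure in the proof of Theorem (1.3) of \cite{ChHr} that the paper cites, adapting linear disjointness to the relatively-closed setting.

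The main obstacle I anticipate is precisely step (2)–(3): showing that niceness is exactly the right hypothesis to force $\gamma \in E$, i.e. that the strong automorphism $f$ can be chosen to fix enough of $E$ (not just $X$) while moving $Y$ as required, and that this is compatible with $\gamma$ being algebraic over all of $E$ and not just over the finitely generated piece carried by $X$. The subtlety is that $E$ is infinitely generated over $\A$, so $f$ a priori moves lots of $E$; the resolution must be that the minimal polynomial of $\gamma$ over $E$ genuinely has coefficients in a finitely generated, ``rigid'' subfield (this is where $a \da_\A b$ and linear disjointness over $\A^{alg}$ are essential — they let one descend the coefficients from $E$ to $\B_1$), and once $q \in \B_1[T]$ with $\gamma, f(\gamma)$ both roots and $f$ chosen with $f(Y) \subseteq \B_1$, we get $f(\gamma) \in \B_1 \subseteq E$, and then a degree count over the field composite plus the regularity of $E/\A^{alg}$ closes it. I expect the write-up to lean heavily on Fact \ref{algebra}, Lemma \ref{tensor}, and the perfectness from Remark \ref{perfect}, mirroring the proof of Lemma \ref{AP0}.
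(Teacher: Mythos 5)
Your overall plan --- take a counterexample $\gamma \in (E^{alg}\cap F)\setminus E$ with minimal polynomial $P$ over $E$, use niceness to produce a strong automorphism that ``straightens'' the $\B_{12}$- and $\B_2^{alg}$-data of $\gamma$ while fixing the data of $P$, and then derive a contradiction --- is the right one, and it is the one the paper takes. However, the way you try to bridge the ``$f$ fixes only a finite tuple $X$, but $P$ has coefficients from all of $E$'' difficulty has a genuine gap. You propose to first descend the coefficients of $P$ into $\B_1$ via a Galois argument over $\A^{alg}/\A$. There is no reason this descent should hold: an element of $F$ algebraic over $E=\B_1\cdot\A^{alg}$ can have a minimal polynomial whose coefficients genuinely involve elements of $\A^{alg}\setminus\A$, and the $Aut(\A^{alg}/\A)$-action does not obviously fix the relevant conjugate set of $\gamma$. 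You then notice the tension yourself (``actually $f$ fixes only $X$, so I'd be careful'') and leave the step unresolved.

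The paper's resolution is different and avoids any descent to $\B_1$. The point is a careful choice of the \emph{finite} parameter tuple. Since the coefficients of $P$ lie in $E = \B_1\cdot\A^{alg}$, each is a rational function of finitely many elements of $\B_1$ and finitely many elements of $\A^{alg}$; those latter elements are algebraic over a finitely generated subfield of $\A$, so by enlarging the finite tuple from $\B_1$ (adding the relevant generators, which lie in $\A\subseteq\B_1$) one obtains a finite $X_1\subseteq\B_1$ and a finite $X_2\subseteq(X_1\cap\A)^{alg}$ with all coefficients of $P$ inside $\langle X_1,X_2\rangle$. Crucially $X_2\subseteq X_1^{alg}$, and in $ACF$ a strong automorphism over $X_1$ fixes $X_1^{alg}$ pointwise; so any $f\in Saut(\mathbb{F}/X_1)$ already fixes $X_2$, hence all coefficients of $P$. (The paper phrases this via a stationarity step producing a $g\in Aut(\mathbb{F}/X_1X_2)$ with $g(Y_1)=f(Y_1)$, but the effect is exactly to fix $X_1X_2$ while moving $Y_1$ as $f$ does.) Symmetrically one chooses finite $Y_1\subseteq\B_{12}$ and $Y_2\subseteq(Y_1\cap\B_2)^{alg}$ with $\gamma\in\langle Y_1,Y_2\rangle$. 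Now apply niceness with this $X_1$: you get $f\in Saut(\mathbb{F}/X_1)$ with $f(Y_1)\subseteq\B_1$ and $f(Y_1\cap\B_2)\subseteq\A$, whence $f(Y_2)\subseteq f((Y_1\cap\B_2)^{alg})\subseteq\A^{alg}$. Therefore $g(\gamma)\in\langle\B_1,\A^{alg}\rangle=E$ and $P(g(\gamma))=0$, which, $P$ being irreducible over $E$, forces $\deg P=1$ and $\gamma\in E$ --- the contradiction. So the missing idea is not a descent of the coefficients to $\B_1$, but the observation that strong automorphisms over $X_1$ automatically fix $(X_1\cap\A)^{alg}$, so it suffices to choose $X_1$ so that the coefficients of $P$ live in $X_1\cup(X_1\cap\A)^{alg}$; once you see that, your Galois/degree-count machinery and the appeal to Fact \ref{algebra} and Remark \ref{perfect} are all unnecessary for this lemma (they belong to the proof of Theorem \ref{ind}, not here).
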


\begin{proof}
We will work in the pure field language, in $\mathbb{F}$.
Suppose, for the sake of contradiction,
that there is some $a \in (E^{alg} \cap F) \setminus E$.
Let $P(x) \in E[x]$ be the minimal polynomial of $a$ over $E$.
We will derive a contradiction by showing that $P$ has a root in $E$. 
By the definitions of $E$ and $F$, there are finite sets $X_1 \subseteq \B_1$ and $X_2 \subseteq (X_1 \cap\A)^{alg}$
such that $\langle X_1, X_2 \rangle$ 
(the field generated by $X_1$ and $X_2$)
contains the coefficients of $P$,
and finite sets $Y_1 \subseteq \B_{12}$ and $Y_2 \subseteq (Y_1 \cap \B_2)^{alg}$
such that $a \in \langle Y_1, Y_2 \rangle$.
Since the type $p$ is nice,
there is a strong automorphism $f \in Saut(\mathbb{F}/X_1)$
such that $f(Y_1) \subseteq \B_1$
and $f(Y_1 \cap \B_2) \subseteq \A$.
Since $X_2 \subseteq X_1^{alg}$ and $f \in Saut(\mathbb{F}/X_1)$,
the automorphism $f$ is the identity on $\langle X_1, X_2 \rangle$ and therefore $f$ fixes the coefficients of $P$,
so $P(f(a))=0$.
We claim that $f(a) \in E$.
Indeed, $f(a) \in f(\langle Y_1, Y_2 \rangle)$, and
$f(Y_2) \subseteq f((Y_1 \cap \B_2)^{alg})  \subseteq \A^{alg}$,
and since $f(Y_1) \subseteq \B_1$, we have
$f(Y_1 \cup Y_2) \subseteq \langle \B_1, \A^{alg} \rangle=E$.
\end{proof}

Now we can  prove the independence theorem.
When proving it, we will work in the algebraic closure $\mathbb{F}$ of $\M$.
The types given in the statement will give us interpretations of the distinguished automorphisms on certain models in $\K$.
To prove the theorem, we will need to find a model where these automorphisms have common extensions.
For this, we first extend the automorphisms in to the algebraic closures of the original models in such a way that they agree on the overlaps,
and then apply the argument from the proof of the Generalized Independence Theorem in \cite{ChHr}.
From now on, we will denote by $tp_f(a/A)$ the first order type of $a$ over $A$ in the pure field language.

\begin{theorem}\label{ind}
Let $\K$ be an FCA-class, let
$\A \in \K$, let $x_1, x_2, x_3$ be (possibly infinite)
tuples of variables,
and let $p_{12}(x_1,x_2)$, $p_{13}(x_1, x_3)$ and $p_{23}(x_2, x_3)$ be complete existential types over $\A$
such that $p_{12} \raj x_1=p_{13} \raj x_1$, $p_{12} \raj{x_2}=p_{23} \raj x_2$, and $p_{13} \raj x_3=p_{23} \raj x_3$.
Suppose that $p_{12}$ is nice and if $(a_i, a_j)$ realises $p_{ij}$, then  
$a_i \da_\A a_j$.
Then, the type $p_{12} \cup p_{13} \cup p_{23}$ 
can be realised by some tuple $(a_1, a_2, a_3)$
such that $a_1$, $a_2$, and $a_3$ are independent over $\A$.
\end{theorem}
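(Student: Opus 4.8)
The plan is to realize the three types simultaneously by building, inside the field monster $\mathbb{F}=\M^{alg}$, a model of $T$ whose underlying field carries compatible copies of the three relatively algebraically closed models furnished by the types, with the distinguished automorphisms patched together. First I would fix a realization $(a_1,a_2)$ of $p_{12}$ inside $\M$, set $\B_i=acl_\sigma(\A,a_i)$, and — using the Extension property (Lemma \ref{nonforkprop}(iii)) together with JEP/AP — find realizations $(a_1,a_3)$ of $p_{13}$ and $(a_2,a_3)$ of $p_{23}$ (for possibly different copies $a_3$) such that $a_1$, $a_2$, $a_3$ are pairwise independent over $\A$ and in fact $\{a_1,a_2\}\da_\A a_3$; the compatibility conditions $p_{12}\raj x_1=p_{13}\raj x_1$ etc. guarantee the restrictions match, so the three copies of $\A$-with-structure agree on overlaps. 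The goal is then to collapse the three copies of $a_3$ into one and produce a common field over which all the $\sigma_i$'s extend and still commute.

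Next I would pass to the field monster $\mathbb{F}$ and work with the field composites. Let $E$ be the composite of $\B_1$ and $\A^{alg}$, and more generally consider the composites $\B_i\cdot\A^{alg}$ inside $\mathbb{F}$; by algebraic independence of the $a_i$ over $\A$ and Fact \ref{algebra}, the relevant field extensions are separable and the composites are linearly disjoint over $\A^{alg}$ (after moving things by field automorphisms fixing $\A^{alg}$). Applying Lemma \ref{tensor} pairwise, the automorphisms $\sigma_1,\dots,\sigma_n$ lift from $\B_1$ and from $\B_2$ to their composite, and Lemma \ref{nice} — this is where niceness is used — tells us that $E=\B_1\cdot\A^{alg}$ is relatively algebraically closed in $F=\B_2^{alg}\cdot\B_{12}$, which is exactly what is needed to run the linear-disjointness argument when the third model $\B_3$ and its composites are brought in without the lifted automorphisms becoming incompatible on the algebraic overlaps. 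Following the argument of the Generalized Independence Theorem in \cite{ChHr}, one builds inside $\mathbb{F}$ a single field $L$ containing (isomorphic copies of) $\B_1$, $\B_2$, $\B_3$ and $\A$ glued appropriately, on which the three families of lifted automorphisms restrict consistently, hence commute, giving $(L,\tau_1,\dots,\tau_n)\models T$; passing to its relative algebraic closure and then embedding into $\M$ by AP/JEP, the images of $a_1,a_2,a_3$ realize $p_{12}\cup p_{13}\cup p_{23}$ and are independent over $\A$.

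The main obstacle is the patching step: ensuring that when the automorphisms $\sigma_i$ are lifted from $\B_1$, from $\B_2$, and from $\B_3$ to the ambient composite $L$, the three lifts genuinely agree on the pairwise overlaps $\B_i^{alg}\cap\B_j^{alg}$ and on $\A^{alg}$, so that a single commuting family $\tau_i$ results. In \cite{ChHr} this is handled using definability of types in an $\omega$-stable theory to absorb the troublesome parameters into the base; here that tool is unavailable, and niceness of $p_{12}$ (via Lemma \ref{nice}) is precisely the substitute — it lets us move, by a strong field automorphism over the relevant base, the algebraic part of $\B_{12}$ that causes trouble back inside $\B_1$ and the part inside $\B_2$ back into $\A$, so that the algebraic closures interact only through $\A^{alg}$ and the pairwise lifts automatically cohere. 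Verifying carefully that this suffices — i.e., that after the strong-automorphism adjustment the three composites are linearly disjoint over $\A^{alg}$ in the configuration required — is the delicate point, and the rest is an application of Lemma \ref{tensor}, Fact \ref{algebra}, and the amalgamation machinery already established.
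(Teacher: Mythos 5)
Your plan follows the paper's proof quite closely: work in the pure field monster $\mathbb{F}=\M^{alg}$, lift the interpretations of the distinguished automorphisms step by step using Lemma \ref{tensor} and Fact \ref{algebra}, invoke Lemma \ref{nice} (relative algebraic closedness of $\B_1\A^{alg}$ in $\B_2^{alg}\B_{12}$) as the substitute for the definability-of-types step in \cite{ChHr}, and finish with the linear-disjointness argument from the Generalized Independence Theorem. So the high-level route is the right one.

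Two points in the sketch are imprecise enough to be worth flagging. First, your ``collapse the copies of $a_3$'' step is doing real work: the paper does not move things by field automorphisms fixing only $\A^{alg}$, as you write, but rather by a field automorphism fixing $\A_1^{alg}$ pointwise, obtained from stationarity of types in ACF. Concretely, it takes realizations $(a_1,a_2)$, $(a_1,a_3')$, $(a_2,a_3)$ with $a_3\da_{\A a_2}a_1$, extends the $\K$-isomorphism $g$ carrying $\A_3'$ to $\A_3$ to $f\in Aut(\mathbb{F}/\A^{alg})$ (using perfectness of $\A$ and Fact \ref{algebra}), and then, because $(\A_3')^{alg}$ and $\A_3^{alg}$ have the same strong type over $\A_1^{alg}$, finds $F\in Aut(\mathbb{F}/\A_1^{alg})$ with $F\raj(\A_3')^{alg}=f$, setting $\B_{13}:=F(\B_{13}')$ and conjugating the difference structure by $F$. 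Fixing $\A_1^{alg}$ (not just $\A^{alg}$) is essential so that the copy of $p_{13}$ still sits over the same $\A_1$ as $p_{12}$. Second, the automorphism extensions $\tilde{\tau}_{ij}$ have to be built in a specific order ($\A^{alg}\to\A_3^{alg}\to\B_{13}^{alg},\B_{23}^{alg}\to\B_{12}^{alg}$) with compatible restrictions enforced at each step, and linear disjointness is established over shifting bases ($\A$, $\A_3$, $\A_1^{alg}$, $F_0$), not uniformly over $\A^{alg}$; the role of Lemma \ref{nice} is exactly to make the last of these patchings (extending $\tau_{12}$ compatibly with $\tilde{\tau}_{13}$ and $\tilde{\tau}_{23}$ on $\B_1\A^{alg}$) possible. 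If you tighten those two points, your sketch matches the paper's argument.
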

 
\begin{proof}
We will construct a model where the types $p_{12}$, $p_{13}$, and $p_{23}$ 
are realised simultaneously. 
Let $(a_1, a_2)$, $(a_1,a_3')$, and $(a_2,a_3)$ be such that they realise $p_{12}$, $p_{13}$,
and $p_{23}$, respectively,
and $a_3 \da_{\A a_2} a_1$ %Note that $a_3$ can be chosen this way;
%take any $(a_2, a_3') \models p_23 and then choose $a_3$ s.t $tp(a_3/\A a_2)=tp(a_3'/ \A_ a_2)$ and $a_3 \da_{\A a_2} a_1
(and hence $a_3 \da_\A \{a_1, a_2\}$ by transitivity).

Denote $\A_i=acl_\sigma(\A, a_i)$ for $i=1,2,3$,
$\A_3'=acl_\sigma(\A, a_3')$,
$\B_{12}=acl_\sigma(\A, a_1, a_2)$,
$\B_{23}=acl_\sigma(\A, a_2, a_3)$,
and $\B_{13}'=acl_\sigma(\A, a_1, a_3')$.  
For each distinguished automorphism $\tau \in \{\sigma_1, \ldots, \sigma_n\}$,
the type $p_{ij}$ gives an interpretation $\tau_{ij}$ 
of $\tau$ on $\B_{ij}$. 
Denote by $\tau_{13}'$ the interpretation of $\tau$ on $\B_{13}'$.
Moreover, there is some $g \in Aut(\M/\A)$ such that $g(\A_3')=\A_3$,
and thus $\tau_{23} \raj \A_3=g \circ (\tau_{13}' \raj \A_3') \circ g^{-1}$.

From now on, we  will work in the pure field language in the algebraic closure $\mathbb{F}$
of $\M$.
First, we extend the map $g$ to an automorphism $f \in Aut(\mathbb{F}/\A^{alg})$ as follows.
Since $\A \in \K$, the field $\A$ is perfect (by Remark \ref{perfect}) and relatively algebraically closed in $\M$,
and thus the fields $\A^{alg}$ and $\M$ are linearly disjoint over $\A$ by Fact \ref{algebra}.
Hence, the automorphisms $g$ of $\M$ and $id$ of $\A^{alg}$
have a common extension $g'$ on the field composite $\A^{alg} \M$.
Extend now $g'$ to an automorphism $f$ of $(\A^{alg} \M)^{alg}=\mathbb{F}$.
We then have $f((\A_3')^{alg})=\A_3^{alg}$.

Thus, for types in the field language, it holds that $tp_f(\A_3^{alg}/\A^{alg})=tp_f((\A_3')^{alg}/\A^{alg})$.
Since $\A_3^{alg}$ is independent (in the field sense) from $\A_1^{alg}$ over $\A^{alg}$ (by the choice of $a_3$),
and the same holds for $(\A_3')^{alg}$ (since $(a_1, a_3') \models p_{13}$), we have (by stationarity in fields)
$tp_f(\A_3^{alg}/\A_1^{alg})=tp_f((\A_3')^{alg}/\A_1^{alg})$.
Hence, there is a field automorphism $F \in Aut(\mathbb{F}/\A_1^{alg})$
such that $F \raj (\A_3')^{alg}=f$.  
Denote now $\B_{13}=F(\B_{13}')$
and let $\tau_{13}=F \circ \tau_{13}' \circ F^{-1}$.
 
We will find extensions 
$\tilde{\tau}_{ij}$ for the maps $\tau_{ij}$ to the models $\B_{ij}^{alg}$
such that they agree on the overlaps.
After that, we will construct a model where the $\tilde{\tau}_{ij}$ are compatible,
just like it is done in the proof of the Generalized Independence Theorem in \cite{ChHr}, p. 3009-3010.
To give the big picture, we now sketch the idea of constructing the extensions and then go into the details in the next paragraph.
We will first extend the $\tau_{ij} \raj \A$ to $\A^{alg}$.
Then, since $\A^{alg}$ and $\A_3$ are linearly disjoint over $\A$
(note that this is a consequence of the fact that $\A, \A_3 \in \K$
since this guarantees that $\A$ is perfect by Remark \ref{perfect} 
and relatively algebraically closed in $\A_3$ 
which allows us to use Fact \ref{algebra}),
we can find a common extension of this automorphism and $\tau_{13} \raj \A_3$ to $\A^{alg} \A_3$ which then extends to $\A_3^{alg}$.
Since $\A_3^{alg}$ and $\B_{13}$ are linearly disjoint over $\A_3$
(again because $\A_3, \B_{13} \in \K$),
this automorphism and $\tau_{13}$ have a common extension to $\A_3^{alg} \B_{13}$
and thus to $\B_{13}^{alg}$.
With an analogous process, we will extend $\tau_{23}$ to an automorphism of $\B_{23}^{alg}$.
Finally, we will apply Lemma \ref{nice} to construct $\tilde{\tau}_{12}$. 

We now go into the details of the above argument.
Let $\tau_{\A^{alg}}$ be an extension of the 
 $\tau_{ij} \raj \A$ to $\A^{alg}$.
Now, $\tau_{\A^{alg}}$ and $\tau_{13} \raj \A_3$ have a common extension to
$\A^{alg} \A_3$ which then extends to an automorphism $\tau_{\A_3^{alg}}$ of $\A_3^{alg}$.
The automorphisms $\tau_{\A_3^{alg}}$ and $\tau_{13}$ have a common extension to $\A_3^{alg} \B_{13}$
which extends to an automorphism $\tilde{\tau}_{13}$ of $\B_{13}^{alg}$.
We have $\tau_{23} \raj \A_3=\tau_{\A_3^{alg}} \raj \A_3$,
and these automorphisms have  a common extension to $\A_3^{alg} \B_{23}$
since $\A_3^{alg}$ and $\B_{23}$ are linearly disjoint over $\A_3$ (again because $\A_3, \B_{23} \in \K$).
It extends to an automorphism $\tilde{\tau}_{23}$ of $\B_{23}^{alg}$.
 
By Lemma \ref{nice}, we know that $\A_1 \A^{alg}$ is relatively algebraically closed in $\A_2^{alg} \B_{12}$.
It is also the composite of two perfect fields ($\A_1$ is perfect since $\A_1 \in \K$) and thus perfect.
Hence, $\A_1^{alg}$ and $\A_2^{alg} \B_{12}$ are linearly disjoint over $\A_1 \A^{alg}$.
We will now construct an automorphism $\sigma$ on $\A_2^{alg} \B_{12}$ such that $\tau_{12} \subseteq \sigma$
and $\sigma \raj \A_1 \A^{alg}=\tilde{\tau}_{13} \raj \A_1 \A^{alg}$,
and then simultaneously extend $\sigma$ and $\tilde{\tau}_{13} \raj \A_1^{alg}$
to an automorphism of $\A_1^{alg} \A_2^{alg} \B_{12}$ which then extends to an automorphism $\tilde{\tau}_{12}$ 
on $\B_{12}^{alg}$. 
For this, we note that $\A_2$ is perfect and relatively algebraically closed in $\B_{12}$
(since $\A_2, \B_{12} \in \K$),
and thus $\tau_{12}$ and $\tilde{\tau}_{23} \raj \A_2^{alg}$ have a common extension $\sigma$ to
$\A_2^{alg} \B_{12}$. 
Since $\A_1^{alg}$ and $\A_2^{alg} \B_{12}$ are linearly disjoint over $\A_1 \A^{alg}$, the maps $\tilde{\tau}_{13} \raj \A_1^{alg}$ and $\sigma$ extend to a common automorphism of $\A_1^{alg}\A_2^{alg}\B_{12}$, which in turn extends to an automorphism $\tilde{\tau}_{12}$ of $\B_{12}^{alg}$.
  
We now proceed just like in the proof of the Generalised Independence Theorem in \cite{ChHr},
p. 3009-3010.
First, we note that by the choice of $a_3$,
we have $\B_{12} \da_{\A_1} \B_{13}$,
and thus $\B_{12}^{alg}$ and $\B_{13}^{alg}$ are independent in the field sense over $\A_1^{alg}$.
By Lemma 9.9. in \cite{friedjarden}, they are linearly disjoint over $\A_1^{alg}$,
and thus the automorphisms 
$\tilde{\tau}_{12}$ and $\tilde{\tau}_{13}$ (which agree on $\A_1^{alg}$) have a common extension $\tau'$ 
to the field composite $F_1:=\B_{12}^{alg}\B_{13}^{alg}$.
Let $F_0$ be the field composite of $\A_2^{alg}$ and $\A_3^{alg}$
in $F_1$ (note that these are linearly disjoint over $\A^{alg}$).
By the choice of $a_3$, the tuple $a_3$ is independent in the field sense from $\B_{12}^{alg}$
over $\A^{alg}$, and thus, by Remark 1.9(2) in \cite{ChHr},
$F_1 \cap \B_{23}^{alg}=F_0$,
which implies
 that $F_1$ and $\B_{23}^{alg}$ are linearly disjoint over $F_0$.
Thus the automorphisms $\tau'$ and $\tilde{\tau}_{23}$
have a common extension to the field composite $L:=\B_{12}^{alg}\B_{13}^{alg}\B_{23}^{alg}$
of $F_1$ and $\B_{23}^{alg}$ in $\mathbb{F}$.

Restrict the automorphisms obtained this way to the field $L'$ generated by 
$\B_{12}$, $\B_{13}$, and $\B_{23}$ within $L$.
Since the restrictions commute on these fields, they also commute on $L'$.   
Extend $L'$ to an algebraically maximal model of the theory of fields with commuting automorphisms.
This proves the theorem. 
\end{proof} 

One motivation behind the present paper is to eventually use the methods of geometric stability theory in FCA-classes.
In a geometric context, there often is a specific type that is the object of study.
For example, one could look at the ``line" given by a type of rank 1.
We end this section with a lemma which guarantees that by extending the base model,
we can always assume that a given type is nice in the sense of Definition \ref{nice}.
This means that after taking a free extension of the type, 
we can always apply Theorem \ref{ind} to combine it with other types that satisfy the relevant assumptions. 

\begin{lemma}\label{laajenee}
Suppose $\K$ is an FCA-class and $\M$ is a monster model for $\K$.
Let $a \in \M$ and $\A \in \K$.
There is some $\A' \in \K$ of cardinality $\vert \A \vert + \vert a \vert$ such that 
\begin{itemize}
\item $\A \subseteq \A'$;
\item $a \da_\A \A'$;
\item if $b \in \M$ is such that $b \da_{\A'} a$,
then $tp_\exists(a,b/\A')$ is nice.
\end{itemize}
\end{lemma}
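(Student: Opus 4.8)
The plan is to build $\A'$ as the union of an increasing $\omega$-chain of models in $\K$, where at each stage we record, inside the base, the finitely many ``type witnesses'' that niceness requires. Concretely, let $(a,b)$ be any realisation with $b \da_{\A'} a$; niceness demands that for all finite $X \subseteq acl_\sigma(\A',a)$ and $Y \subseteq acl_\sigma(\A',a,b)$ there is some $f \in Saut(\mathbb{F}/X)$ with $f(Y) \subseteq acl_\sigma(\A',a)$ and $f(Y \cap acl_\sigma(\A',b)) \subseteq \A'$. The natural way to guarantee this is to make $\A'$ large enough that $acl_\sigma(\A',a)$ is, in a suitable sense, ``field-theoretically saturated'' over $\A'$ relative to $a$: roughly, every strong type over a finite subset of $acl_\sigma(\A',a)$ that is realised somewhere in a free amalgam $acl_\sigma(\A',a,b)$ is already realised inside $acl_\sigma(\A',a)$, with the part living over $acl_\sigma(\A',b)$ realised inside $\A'$ itself.

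First I would fix the ambient monster $\mathbb{F} = \M^{alg}$ and work in the pure field language for the relevant moves, exactly as in the proof of Theorem \ref{ind}. Starting from $\A_0 := acl_\sigma(\A)$ (which is in $\K$ by Remark \ref{sulkeumaremark}), I would construct $\A_0 \subseteq \A_1 \subseteq \A_2 \subseteq \cdots$ in $\K$ so that: (1) $a \da_{\A_0} \A_n$ for every $n$ — this is maintained by always adding elements that are field-theoretically independent from $acl_\sigma(\A_0,a)$ over $\A_0$, using Lemma \ref{tensor} (as in the Extension part of Lemma \ref{nonforkprop}) to realise the chosen types while keeping $a$ free over the base; and (2) at stage $n+1$, for every finite tuple from $acl_\sigma(\A_n, a)$ and every finite tuple $c$ realising (in some free extension) a strong type that we want to ``pull down'', we adjoin to $\A_{n+1}$ a realisation of the appropriate projection of that strong type. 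Since each $\A_n$ is of bounded size and there are only boundedly many finite tuples and strong types to handle, the bookkeeping closes in $\omega$ steps, and $\A' := \bigcup_n \A_n \in \K$ by the chain axiom for AECs. The conditions $\A \subseteq \A'$ and $a \da_\A \A'$ (equivalently $a \da_{\A_0}\A'$, by finite character, Lemma \ref{nonforkprop}(i)) are immediate from the construction.

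The core verification is the third bullet. Given $b$ with $b \da_{\A'} a$ and finite $X \subseteq acl_\sigma(\A',a)$, $Y \subseteq acl_\sigma(\A',a,b)$, I would first absorb $X$ and the relevant base parameters into some $\A_n$, so that $X \subseteq acl_\sigma(\A_n, a)$ and the data defining $Y$ over $\A'\cup\{a,b\}$ already appears at a finite stage; by stationarity in $\mathbb{F}$ (algebraically closed fields are stable, indeed the ambient field theory gives us stationarity of strong types), the strong type of $Y$ over $X$ is determined, and by the saturation we built into $\A_{n+1}$ there is a realisation $Y'$ of $stp(Y/X)$ inside $acl_\sigma(\A_{n+1}, a) \subseteq acl_\sigma(\A',a)$ with $Y' \cap acl_\sigma(\A',b)$ realised inside $\A'$; the required $f \in Saut(\mathbb{F}/X)$ is then the strong automorphism sending $Y$ to $Y'$, which exists precisely because $stp(Y/X) = stp(Y'/X)$. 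Here the freeness $b \da_{\A'}a$ is what guarantees that $acl_\sigma(\A',a,b)$ is a free amalgam of $acl_\sigma(\A',a)$ and $acl_\sigma(\A',b)$ over $\A'$, so that the strong type of $Y$ decomposes in the way the argument needs (this is the analogue of Lemma \ref{nice}'s geometric picture, and Fact \ref{algebra} together with Remark \ref{perfect} supplies the requisite linear disjointness to move freely in $\mathbb{F}$).

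The main obstacle I expect is making the ``saturation'' step both correct and actually achievable inside $\K$: when we want to adjoin a realisation of a projected strong type to $\A_{n+1}$, we must be sure that the resulting field still carries commuting automorphisms extending those on $\A_n$ and $a$ (so that $\A_{n+1} \in \K$) and that it does not destroy $a \da_{\A_0}\A_{n+1}$ — these two demands pull in opposite directions, since pulling field-theoretic data down toward the base is exactly the kind of move that can create dependence. Resolving this tension is where Lemma \ref{tensor} and the linear-disjointness machinery of Fact \ref{algebra} do the real work: one adjoins the new elements as a tensor factor linearly disjoint from $acl_\sigma(\A_0,a)$ over $\A_0$, extends the distinguished automorphisms via Lemma \ref{tensor}, and then takes $acl_\sigma$ to land back in $\K$ via Remark \ref{sulkeumaremark}. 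A careful enumeration argument — iterating over all finite $X$ and all strong types over them, with a standard pairing function to ensure everything is eventually treated — then finishes the construction.
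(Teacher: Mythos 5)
Your proposal follows the paper's proof: build $\A'$ as an $\omega$-chain in $\K$ that, at each step, absorbs into the base a witness realising each strong type $stp(b,Y,Z/X)$ which could arise from a free amalgam with $a$, and then verify niceness by matching the actual data $(Y,Z)$ to a pre-recorded copy via a strong automorphism over $X$. The paper's bookkeeping is sharper than your description: it enumerates quadruples $(X,Y,Z,b)$ up to the equivalence $stp(b,Y,Z/X)$ and absorbs only the finite tuple $b^*$ (a copy of the $b$-coordinate), chosen with $b^* \da_{\A_i} a$, so that $Z^*$ lands automatically in $\A_{i+1}=acl_\sigma(\A_i,b^*)$ and $Y^*$ in $acl_\sigma(\A_{i+1},a)$ -- your phrase ``the appropriate projection'' should be spelled out this way.

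Two slips to correct. First, the new element at stage $n$ must be chosen free from $a$ over the \emph{current} base $\A_n$, not over $\A_0$ as you wrote in two places: with only $c \da_{\A_0} a$ at each step, elements added at different stages can conspire to make $a$ dependent on $\A'$ over $\A$ (for instance $c_1, c_2$ each transcendental over $acl_\sigma(\A_0,a)$ but with $c_1 c_2 = a$), whereas $c \da_{\A_n} a$ together with transitivity and local character (Lemma \ref{nonforkprop}) yields $a \da_\A \A'$. Second, in the verification the automorphism $f$ must be required to send the specific subtuple $Z = Y \cap acl_\sigma(\A',b)$ into $\A'$; the phrase ``$Y' \cap acl_\sigma(\A',b)$ realised inside $\A'$'' does not express this, and the appeal to stationarity there is superfluous -- equality of strong types over $X$ already furnishes the required $f \in Saut(\mathbb{F}/X)$.
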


\begin{proof}
Consider quadruples $(X, Y, Z, b)$
where $X$ is a finite subset of $acl_\sigma(\A,a)$, 
$Z \subseteq Y$ are finite subsets of $\M$,
and $b$ is a finite tuple from $\M$. 
We say two such quadruples $(X_i, Y_i, Z_i, b_i)$ and $(X_j, Y_j, Z_j, b_j)$ 
are isomorphic if $X_i=X_j$, and in the field $\mathbb{F}=\M^{alg}$ the strong type (in the field language)
$stp(b_i,Y_i, Z_i/X_i)=stp(b_j,Y_j, Z_j/X_i)$.  

Let now $(X_i, Y_i,Z_i, b_i)_{i<\lambda}$
list, up to isomorphism, all such quadruples.
We note that $\lambda \le \vert a \vert + \vert \A \vert$ 
(there are at most $\aleph_0$ many strong types over a finite set).
Construct models $\A_i$, $i<\lambda$, as follows.
Let $\A_0=\A$.
For each $i<\lambda$, construct $\A_{i+1}$
as follows.
If there is some $(X^*, Y^*, Z^*,b^*)$
which is isomorphic to $(X_i, Y_i, Z_i, b_i)$, and with
$b^* \da_{\A_i} a$,
$Y^* \subseteq acl_\sigma(\A_i, a,b^*)$, 
and $Z^* \subseteq acl_\sigma(\A_i, b^*)$,
let $\A_{i+1}=acl_\sigma(\A_i, b_i^*)$.
Otherwise, let $\A_{i+1}=\A_i$.
At limit steps, take unions.

Let $\A^1=\bigcup_{i<\lambda} \A_i$.
Repeat the above process with $\A^1$ in place of $\A$,
continue this way to obtain a chain of models $\A^i$, $i<\o$,
and set $\A'=\bigcup_{i<\o} \A^i$.
We claim that $\A'$ is as wanted.

First of all, by symmetry, transitivity and local character of the independence relation
(Lemma \ref{nonforkprop}, (ii), (v), and (vi)),
it follows from the construction that $a \da_\A \A'$.

Let now $b \in \M$ be a (possibly infinite) tuple such that $b \da_{\A'} a$.
We show $tp_\exists(a,b/\A')$ is nice.
Let $X \subseteq acl_\sigma(\A', a)$
and $Y \subseteq acl_\sigma(\A',a,b)$ be finite sets,
and let $Z=Y \cap acl_\sigma(\A',b)$.
Now, there is some finite $b_0 \subseteq b$ such that $Y \subseteq acl_\sigma(\A',a,b_0)$
and $Z \subseteq acl_\sigma(\A', b_0)$.
By Lemma \ref{nonforkprop} (i) and (v), there is a finite set $A \subseteq \A'$
such that $b_0 \da_A \A' a$.
There is some
$n<\o$ such that $A \subseteq \A^n$ (and thus $a \da_{\A^n} b_0$), $X \subseteq acl_\sigma(\A^n, a)$,
$Y \subseteq acl_\sigma(\A^n, a,b_0)$ and $Z \subseteq acl_\sigma(\A^n, b_0)$.
Let $(X_i^n, Y_i^n, Z_i^n,b_i^n)_{i<\lambda}$ 
be the list we have used to construct $\A^{n+1}$ from $\A^n$,
and let $(\A_i^n)_{i<\lambda}$ 
be the construction of $\A^{n+1}$.
Now, there is some $i<\lambda$ 
such that $(X,Y,Z,b_0)$ is isomorphic to $(X_i^n, Y_i^n, Z_i^n,b_i^n)$,
and then $(X,Y,Z,b_0)$ witnesses $\A_i^n \neq \A_{i+1}^n$.
So there is $(X^*, Y^*, Z^*, b^*)$
such that $\A_{i+1}^n=acl_\sigma(\A_i^n,b_i^*)$, $Y^* \subseteq acl_\sigma(\A_i^n,a,b^*)$,
$Z^* \subseteq acl_\sigma(\A_i, b^*)$,
and $(X^*, Y^*, Z^*, b^*)$ is isomorphic to $(X_i^n, Y_i^n, Z_i^n,b_i^n)$
and thus also to $(X,Y,Z,b_0)$. 
Hence, there is an automorphism $f \in Saut(\mathbb{F}/X)$
such that $f(Y)=Y^*$ and $f(Z)=Z^*$.
Since $b^* \in \A'$, the automorphism $f$ is as wanted,
and $tp_\exists(a,b/\A')$ is nice.
\end{proof}

\section{Simplicity}\label{last}

\noindent 
In \cite{bl}, the first order notion of a simple theory is generalised to the non-elementary framework of homogeneous models.
We will show in this section that a monster model for an FCA-class is simple 
(or more specifically, $\aleph_0$-simple or supersimple) in the sense of \cite{bl}.
The idea of our proof comes from
\cite{KP}, where it is shown that a first order theory is simple if and only if it has a syntactic (i.e. invariant under automorphisms) notion of independence with the usual properties of non-forking 
(the same that are listed in our Lemma \ref{nonforkprop})  
and satisfies the Independence Theorem over models.
We will adapt the argument to our setting and show that in an FCA-class,
Theorem \ref{ind} together with Lemma \ref{nonforkprop} implies simplicity.
The main difference will be that we cannot use Compactness the way it is used in the first order context.
 
We will now recall the definition of simplicity from \cite{bl}.
Since we aim to show that our class is $\aleph_0$-simple (also called supersimple), 
we will only provide the definitions relevant to that. 
We refer the reader to \cite{bl}, Definitions 2.1-2.5 for a more general notion of simplicity in the context of homogeneous structures.

We note that in \cite{bl}, everything happens, strictly speaking, 
inside a fixed model rather than a class of models.
Moreover, \cite{bl} assumes that in their model, quantifier free types imply Galois types.
We may also make this assumption after expanding our language with suitable relation symbols,
as is done in Remark \ref{multire}. 

\begin{definition}
Let $\kappa$ be an ordinal and $A \subseteq \M$.
We say a sequence $(a_i)_{i<\k}$ is \emph{indiscernible over $A$}
if for all $n<\o$ and $i_1< \cdots < i_n<\k$ 
and $j_1< \cdots < j_n<\k$, it holds that $tp_\exists(a_{i_1}, \ldots, a_{i_n}/A)=tp_\exists(a_{j_1}, \ldots, a_{j_n}/A)$.
\end{definition} 
 
\begin{definition}\label{dividing}
We say an existential type $p(v, b)$
\emph{divides} over $A \subseteq \M$,
if there is an infinite $A$-indiscernible sequence $\{b_i \, | \, i \in \kappa\}$
for some infinite ordinal $\k$,
with $tp_\exists(b_0/A)=tp_\exists(b/A)$,
such that $\bigcup_{i \in \k} p(v, b_i)$ is inconsistent.
\end{definition}

\begin{definition}\label{free}
If $A, B, C \subseteq \M$,  
we say $A$ is \emph{$\aleph_0$-free} 
from $B$ over $C$ if for all tuples $a \in A$  and $b \in B \cup C$ such that $\vert a \vert, \vert b \vert < \kappa$,
$tp_\exists(a/b)$ does not divide over $C$.
\end{definition}
 
Next, we present the notion of  $\aleph_0$-simplicity (or supersimplicity) in the sense of \cite{bl}
(adapted to our setting).
In addition to the two conditions of the following definition (local character and free extensions property),
Definition 2.5 in \cite{bl} also requires a third condition for $\kappa$-simplicity, namely that $\kappa$ -freeness has finite character in the sense of Definition 2.3 in \cite{bl} (note that finite character in the sense of this definition is slightly different from finite character in the sense of e.g. our Lemma \ref{nonforkprop}).
However, $\aleph_0$-freeness always has finite character in this sense, so the condition is void in the   context of $\aleph_0$-simplicity, and we thus omit it.  
  
\begin{definition}\label{blsimple}
Let $\M$ be a monster model for a homogeneous AEC.
We say $\M$ is \emph{$\aleph_0$-simple (supersimple)} if the following hold:
\begin{itemize}
\item If $a \in \M$ is a finite tuple and $A \subseteq \M$,
then there is some finite $A_0 \subseteq A$ such that $a$ is $\aleph_0$-free from $A$ over $A_0$;
\item If $a \in \M$, $A \subseteq \M$,
$tp_\exists(a/A)$ has infinitely (unboundedly) many realisations, $B \subseteq A$  
is such that $a$ is $\aleph_0$-free from $A$ over $B$,
and $C \subseteq \M$ is such that $B \subseteq C$,
then there is some $c \in \M$ realising $tp_\exists(a/A)$
such that $c$ is $\aleph_0$-free from $C$ over $B$.
\end{itemize}

\end{definition}

\begin{remark}
Another notion of simplicity in a non-elementary setting can be found in \cite{pi} (Definition 3.2).
There, dividing is defined just as in \cite{bl} (see our Definition \ref{dividing})
but with $\omega$ in place of $\kappa$
(thus it is a stronger requirement than in \cite{bl}).
Then, a type $p(x)$ is defined to \emph{fork} over a set $A$
if there is a (possibly infinite) set $\Phi(x)$ of existential formulae (with parameters) each of which divides over $A$ such that $\M \models p(x) \rightarrow \bigvee \Phi(x)$.
Finally, $\M$
is said to be \emph{simple} if for any finite tuple $a \in \M$ and any set $A \subseteq \M$,
there is some countable $A_0 \subseteq A$ such that $tp_\exists(a/A)$ does not fork over $A_0$.

We note that if $\M$ is a monster model for an FCA-class and $\aleph_0$-simple in the sense of \cite{bl}, then it is simple in the sense of \cite{pi}.
Indeed, let $a \in \M$ be a finite tuple, and $A \subseteq \M$. 
We need to find a countable set $A_0 \subseteq A$ such that $p=tp(a/A)$
does not fork over $A_0$.

Suppose first that $p$ has infinitely many realizations.
By the first property in Definition \ref{blsimple} (local character),
there is some finite $A_0 \subseteq A$ such that $a$ is $\aleph_0$-free from $A$ over $A_0$.
We show that $p$ does not fork over $A_0$.
Assume towards a contradiction that it does. 
Then, there is a collection of existential formulae $\Phi$ such that $p \vdash \bigvee \Phi$ and each $\phi(x,c) \in \Phi$ divides (in the sense of \cite{pi} and thus also in the sense of Definition \ref{dividing}) over $A_0$. 
By the extension property in Definition \ref{blsimple}, there are some $a' \models p$ and $B \supseteq A$ such that $a'$ is free from $B$ over $A_0$, and for all $\phi(x,b) \in \Phi$, $b \in B$.
Denote $q=tp(a'/B)$.
Now there is some $\phi(x,b) \in \Phi \cap q$.
It follows that $a'$ is not $\aleph_0$-free from $Ab$ over $A_0$ (and thus not from $B$ either), a contradiction.
 
Suppose now $p$ has only finitely many realizations.
By Lemma \ref{modalg}, $a \in acl_\sigma(A)$.
Now, there is some finite $A_0 \subseteq A$ such that $a \in acl_\sigma(A_0)$.
We claim that $p$ does not fork over $A_0$.
If it does, then there is some $\phi(x,b)$
such that $\M \models \phi(a,b)$
and $\phi(x,b)$ divides over $A_0$ (in the sense of \cite{pi}).
Let $(b_i)_{i<\omega}$ be an indiscernible sequence that witnesses the dividing.
For each $i<\o$,
there is some $a_i \in \M$
such that $tp(a_ib_i/A_0)=tp(ab/A_0)$.
Since $p=tp(a/A_0)$ has only finitely many realizations, there is some infinite set $X \subseteq \omega$
such that $a_i=a_j=a'$ for all $i,j \in X$ and some $a' \in \M$.
By homogeneity of $\M$,
there is some automorphism $f \in Aut(\M/A_0)$ which takes the sequence $(b_i)_{i \in \o}$ to $(b_i)_{i \in X}$.
Now, $f^{-1}(a')$ realizes $\bigcup_{i<\o}\phi(x,b_i)$, a contradiction.

We have chosen to use the notions of dividing and simplicity in \cite{bl} rather than the ones in \cite{pi}
since the notion of dividing is more general and the notion of simplicity is stronger. 
Moreover, we feel they fit our setting better. 
\end{remark} 

Next, we will show that $\aleph_0$-freeness coincides with the independence relation defined in the previous section.
Simplicity will then follow from Lemma \ref{nonforkprop}. 
There is a related argument about the connection between dividing and independence in \cite{KP}
(Claims I and II in the proof of Theorem 4.2), and we will modify it to our setting.
In \cite{KP}, Claim II states that independence implies non-dividing,
and its proof uses Compactness twice.

The proof begins with stretching an infinite indiscernible sequence
to the length $\kappa$ for some large $\kappa$,
and then using Ramsey's Theorem and Compactness to find an increasing, 
continuous sequence of models such that each model contains the beginning of the sequence and the rest of the sequence is indiscernible over the said model.
Here, we will circumvent Compactness by using the Erd\"os-Rado Theorem.
In the first order setting, Compactness is used for a second time at the end of the proof,
to deduce that a type is consistent by showing that each finite subtype is.
There, in place of using Compactness, we will we apply Theorem \ref{ind} and move things around in the monster.
These arguments are captured in the following three lemmas.
Another difference to \cite{KP} is that when we use the Independence Theorem,
we need to take care that the niceness condition holds.
This can be arranged when we are dealing with an indiscernible sequence (see Lemma \ref{yhdistys1}).
  
\begin{lemma}\label{l2}
Let $A \subseteq \M$, and let $(b_i)_{i\le\kappa}$ be an infinite, non-constant sequence of finite tuples that is indiscernible over $A$.
Then, there is an increasing, 
continuous sequence of models $\A_i$, $i<\kappa$,
such that for each $i$,
\begin{enumerate}[(i)]
\item $\A_i$ contains $A \cup (b_j:j<i)$;
\item $(b_j : i \le j \le \kappa)$ is indiscernible over $\A_i$.
\end{enumerate}
\end{lemma}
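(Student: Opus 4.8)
The plan is to build the chain by transfinite recursion on $i<\kappa$, taking each $\A_i$ to be $acl_\sigma$ of $A$ together with the terms $(b_j\mid j<i)$ enumerated so far (and, at limits, of the union of the $\A_j$ already built). Since for any $S\subseteq\M$ the set $acl_\sigma(S)$ is a member of $\K$ and a strong submodel of $\M$ (Remark \ref{sulkeumaremark} together with the observation following it that $cl_\M(S)=acl_\sigma(S)$, and multiuniversality), and since $|acl_\sigma(S)|\le|S|+\aleph_0$, every model produced this way is a legitimate small member of $\K$, and requirement (i) will be built in directly. The invariant to maintain is precisely requirement (ii): that $(b_j\mid i\le j\le\kappa)$ is indiscernible over $\A_i$. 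Everything then reduces to one sub-claim: \emph{if an infinite sequence is indiscernible over a set $S\subseteq\M$, it remains indiscernible over $acl_\sigma(S)$.}

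Granting the sub-claim, the recursion runs as follows. Put $\A_0:=acl_\sigma(A)$; then $A\subseteq\A_0$ and, by the sub-claim, $(b_j)_{j\le\kappa}$ is indiscernible over $\A_0$. At a successor step, suppose $(b_j\mid i\le j\le\kappa)$ is indiscernible over $\A_i$; prepending $b_i$ to an increasing tuple from $(b_j\mid i<j\le\kappa)$ produces an increasing tuple from $(b_j\mid i\le j\le\kappa)$, so $(b_j\mid i<j\le\kappa)$ is indiscernible over $\A_i\cup\{b_i\}$, and with $\A_{i+1}:=acl_\sigma(\A_i\cup\{b_i\})$ the sub-claim gives that it is still indiscernible over $\A_{i+1}$, which clearly contains $A\cup(b_j\mid j\le i)$. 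At a limit step $i$ set $\A_i:=\bigcup_{j<i}\A_j$; this is in $\K$ and a strong submodel of $\M$ by the chain axioms for an AEC, it contains $A\cup(b_j\mid j<i)$ (as $b_j\in\A_{j+1}$ and $j+1<i$), and by construction the chain is continuous at $i$. Moreover $(b_j\mid i\le j\le\kappa)$ is indiscernible over $\A_i$: it is a sub-tail of $(b_j\mid j'\le j\le\kappa)$, which is indiscernible over each $\A_{j'}$ for $j'<i$; and since an existential formula involves only finitely many parameters (and existential types coincide with Galois types by Lemma \ref{etyypitmaar}), indiscernibility over every $\A_{j'}$ upgrades to indiscernibility over their union $\A_i$. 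This produces the required chain $(\A_i)_{i<\kappa}$, the sizes $|\A_i|\le|A|+|i|+\aleph_0$ staying well below $|\M|$ throughout, so the construction takes place inside $\M$ without difficulty.

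It remains to prove the sub-claim, and this is where the first-order argument, which at the corresponding point uses Compactness as in \cite{KP}, must be replaced by a counting argument. By finite character of existential types it suffices to treat the case $S=E$ finite and to adjoin a single element $c\in acl_\sigma(E)$; by Lemma \ref{modalg} the Galois type of $c$ over $E$ has only finitely many realisations $c=c_1,\dots,c_N$. One colours the increasing $n$-tuples of the (infinite) sequence by their Galois type over $E\cup\{c_1,\dots,c_N\}$ --- a finite colouring, since over $E$ there is a single such type and the finitely many algebraic parameters $c_l$ refine it only finitely --- and uses Ramsey's theorem (or, for long index sequences, Erd\"os--Rado) to extract an infinite monochromatic subsequence, which is then indiscernible over $Ec$. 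To push this back to the whole sequence one uses that $Aut(\M/E)$ acts on $\{c_1,\dots,c_N\}$ with finite orbits, together with a shift argument: any increasing tuple of the original sequence is $Ec$-equivalent to one from the extracted subsequence, via an automorphism obtained by iterating a shift of the indiscernible sequence until it returns $c$ to itself (possible by finiteness of the orbit of $c$). The main obstacle is exactly this transport step --- keeping careful track of the interaction between the extracting and shifting automorphisms and the conjugates of $c$, and accommodating index sequences longer than $\omega$ --- but it is the non-elementary analogue of the standard fact that an indiscernible sequence over a set is indiscernible over its algebraic closure, and nothing beyond Ramsey/Erd\"os--Rado, Lemma \ref{etyypitmaar} and Lemma \ref{modalg} is required.
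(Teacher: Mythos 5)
Your decomposition is genuinely different from the paper's, and in my view cleaner in structure. You build $\A_i = acl_\sigma(A\cup\{b_j : j<i\})$ directly by recursion and reduce everything to a single sub-claim: an indiscernible sequence over $S$ stays indiscernible over $acl_\sigma(S)$. The paper instead Skolemizes the language with root-extracting functions (so that Skolem hull $= acl_\sigma$), stretches the sequence, applies Erd\"os--Rado to extract a Skolem-indiscernible sequence $(b'_i)$, observes that $(b'_j : j\ge i)$ is automatically indiscernible over $acl_\sigma(A\cup\{b'_j:j<i\})$ because the latter is a Skolem hull, and finally transfers this filtration back to $(b_i)$ via homogeneity (Lemma \ref{etyypitmaar}). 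Your approach isolates the conceptual content better and avoids the Skolemization and the homogeneity transfer step; the paper's approach deals with all the $\A_i$ at once. Both need a Ramsey/Erd\"os--Rado ingredient somewhere, and both need the identification of $acl_\sigma$-closure with a first-order algebraicity notion (Lemma \ref{modalg}).

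The gap is exactly where you flag it: the ``shift argument'' in the sub-claim's proof does not, as stated, do what you need. A single order-preserving shift of the sequence, iterated until it returns $c$ to itself, moves an increasing tuple to one that is regularly spaced relative to the original --- but the monochromatic subsequence extracted by Ramsey need not be regularly spaced, so there is no reason for the iterated shift to land inside it. You don't actually need to extract a subsequence and transport at all. Here is a direct argument, still based only on Lemma \ref{modalg}, Lemma \ref{etyypitmaar} and pigeonhole. Let $c_1,\dots,c_N$ be the finitely many realisations of $tp^g(c/E)$ (Lemma \ref{modalg}), and define, on increasing $n$-tuples $\bar b$ from the tail, the relation $\bar b\, R\, \bar b'$ iff $tp^g(\bar b/E\cup\{c_1,\dots,c_N\}) = tp^g(\bar b'/E\cup\{c_1,\dots,c_N\})$. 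Two observations: (a) $R$ is $Aut(\M/E)$-invariant, because every $\tau\in Aut(\M/E)$ permutes $\{c_1,\dots,c_N\}$ as a set, so $\tau$ carries orbits of $Aut(\M/E c_1\dots c_N)$ to orbits of $Aut(\M/E c_1\dots c_N)$; and (b) $R$ has only finitely many classes among realisations of the fixed Galois type $p=tp^g(\bar b/E)$, since $Aut(\M/E c_1\dots c_N)$ has index $\le N!$ in $Aut(\M/E)$ (the quotient embeds in $Sym(\{c_1,\dots,c_N\})$) and $Aut(\M/E)$ acts transitively on realisations of $p$ in the sequence by $E$-indiscernibility. Now by $E$-indiscernibility the truth value of $\bar b\, R\, \bar b'$ is the same for every disjoint increasing pair $\bar b < \bar b'$ from the (infinite) tail: if it were ``false'' you would get infinitely many pairwise $R$-inequivalent tuples, contradicting (b); so it is ``true,'' and the sequence is indiscernible over $E\cup\{c_1,\dots,c_N\}\supseteq Ec$. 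Iterating over the finitely many elements of $acl_\sigma(S)$ appearing in a given formula (as you describe) finishes the sub-claim. With that repaired, your recursion, the successor and limit steps, and the cardinality bookkeeping are all fine; Erd\"os--Rado is not needed for the sub-claim, only an infinite tail.
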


\begin{proof}
Since we are working in a homogeneous structure, indiscernible sequences can be extended (see e.g. Lemma 1.5, (ii) in \cite{bl}, and we can extend the sequence  $(b_i)_{i<\kappa}$  
to length $\beth_{(2^{\kappa+\v A \v})^+}$. 
Add now to our language Skolem functions that give roots for those polynomials that have a root in $\M$
 (i.e. for $A \subseteq \M$, we will have $SH(A)=acl_\sigma(A)$, where $SH$ denotes the Skolem hull).
The usual Ehrenfeucht-Mostowski construction (for details, see e.g.Theorem 8.18 and Appendix A in \cite{baldwin}) gives us an indiscernible sequence
$(b_i')_{i<\kappa}$ and a model $\A=SH((b_i')_{i<\kappa}, A)$  such that if $i_1< \ldots <i_n<\k$, then there are some $j_1< \ldots <j_n<\k$ such that
$SH(b_{i_1}', \ldots, b_{i_n}') \cong SH(b_{j_1}, \ldots, b_{j_n})$.
If $L$ is the original signature (without the Skolem functions),
then $\A \raj L \in \K$.
For $i_1<\ldots<i_n<\kappa$, 
there are some $j_1<\ldots<j_n<\kappa$ such that
\begin{eqnarray*}
SH(b_{i_1}', \ldots, b_{i_n}', A) \raj L \cong SH(b_{j_1}, \ldots, b_{j_n},A) \raj L \cong SH(b_{i_1}, \ldots, b_{i_n},A) \raj L,
\end{eqnarray*}
and thus, by homogeneity (Lemma \ref{etyypitmaar}), $\A \raj L \cong acl_\sigma(A,b_i, i<\k)$.
Now, the models $\A_i=acl_\sigma(A, (b_j)_{j<i})$, $i<\k$ are as wanted.

\end{proof}

\begin{lemma}\label{yhdistys1}
Let $\kappa$ be an infinite cardinal, let $\A \in \K$,
and let the sequence $I=(b_i:i<\kappa)$ 
of possibly infinite tuples be indiscernible and independent over $\A$.
Let $p(x, y) \in S(\A)$ be an existential type such that for some $a \in \M$, it holds that $a \da_\A b_0$ and $(a,b_0) \models p$.
 Then, there is a type $q(x,y,z) \in S(\A)$
such that $q \raj (x,y)= q \raj (x,z)=p$, $q \raj (y,z)=tp_\exists(b_0,b_1/\A)$,
and if $(a, c_1, c_2) \models q$, then $a \da_\A c_1c_2$.
\end{lemma}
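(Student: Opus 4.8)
The plan is to obtain $q$ by amalgamating two copies of $p$ with the pair type $tp_\exists(b_0,b_1/\A)$ via the Independence Theorem, Theorem \ref{ind}. I would set $p_{12}(x,y):=p$, $p_{13}(x,z):=p$ (the same type with $y$ renamed to $z$), and $p_{23}(y,z):=tp_\exists(b_0,b_1/\A)$. The matching conditions of Theorem \ref{ind} then hold: $p_{12}\raj x=p\raj x=p_{13}\raj x$; $p_{12}\raj y=p\raj y=tp_\exists(b_0/\A)=p_{23}\raj y$; and $p_{13}\raj z=p\raj y=tp_\exists(b_0/\A)$, which equals $tp_\exists(b_1/\A)=p_{23}\raj z$ since $I$ is indiscernible over $\A$. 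If Theorem \ref{ind} applies, it yields a realisation $(a_1,a_2,a_3)$ of $p_{12}\cup p_{13}\cup p_{23}$, and then $q:=tp_\exists(a_1,a_2,a_3/\A)$, with $(x,y,z)$ matched to $(a_1,a_2,a_3)$, is as wanted.

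Next I would verify the independence hypothesis of Theorem \ref{ind}. If $(c,c')$ realises $p_{12}$ or $p_{13}$, then $c\da_\A c'$ by the hypothesis on $p$ together with symmetry (Lemma \ref{nonforkprop}(vi)). If $(c,c')$ realises $p_{23}=tp_\exists(b_0,b_1/\A)$, then since $\A\in\K$ is a model and existential types over $\A$ are Galois types (Lemma \ref{etyypitmaar}), there is $g\in Aut(\M/\A)$ with $g(b_0,b_1)=(c,c')$; as $b_0\da_\A b_1$ because $I$ is independent over $\A$, and $\da$ is automorphism invariant, $c\da_\A c'$.

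The hard part is the remaining hypothesis of Theorem \ref{ind}: that $p_{12}=p$ be \emph{nice}, which is not among the assumptions, and arranging it is exactly where the indiscernible sequence is needed. I would fix a realisation $(a_0,b_0)$ of $p$ and, using the extension property, assume $a_0\da_\A(b_i)_{i<\kappa}$ with $(a_0,b_0)\models p$ still. Applying Lemma \ref{laajenee} to $a_0$ and $\A$ produces $\A'\in\K$ with $\A\subseteq\A'$, $a_0\da_\A\A'$, and $tp_\exists(a_0,b/\A')$ nice whenever $b\da_{\A'}a_0$. Using extension and invariance of these conditions under $Aut(\M/\A a_0)$, together with Lemma \ref{l2} applied to a stretch of $I$, I would arrange that $b_0,b_1$ lie in a tail of $I$ indiscernible over $\A'$, so that $a_0\da_{\A'}b_0$, $a_0\da_{\A'}b_1$, $b_0\da_{\A'}b_1$ and $tp_\exists(b_0/\A')=tp_\exists(b_1/\A')$, and moreover that the restrictions to $\A$ of $tp_\exists(a_0,b_0/\A')$, of its image under the automorphism moving $b_0$ to $b_1$, and of $tp_\exists(b_0,b_1/\A')$ are $p$, $p$ and $tp_\exists(b_0,b_1/\A)$. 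Then $tp_\exists(a_0,b_0/\A')$ is nice, Theorem \ref{ind} applies over $\A'$ with it in the role of $p_{12}$, giving $q'(x,y,z)\in S(\A')$ whose three pairwise reducts are the two $p$-copies and $tp_\exists(b_0,b_1/\A')$, and $q:=q'\raj\A$ is the desired type in $S(\A)$.

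I expect the main obstacle to be the bookkeeping in the last paragraph: making $p$ nice by passing to an extension $\A'$ of the base while keeping $(b_0,b_1)$ in the indiscernible, independent configuration over $\A'$ and with the prescribed restrictions to $\A$. This is what is meant by the niceness condition being arrangeable when dealing with an indiscernible sequence; once it is in place, the rest is a routine application of Theorem \ref{ind}.
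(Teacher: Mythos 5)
Your high-level plan — amalgamate two copies of $p$ with the pair type via Theorem \ref{ind} — is the right one, and your verification of the compatibility conditions and of the independence hypothesis is correct. You also correctly identify the niceness hypothesis as the crux. But the paper resolves it by a different and cleaner assignment of roles: it does \emph{not} try to make $p$ nice. Instead it sets $\A^*=acl_\sigma(\A,b_i)_{i<\o}$, takes $p'$ a free extension of $p$ to $\A^*$, and then takes $p_{12}=tp_\exists(b_{\o+1},b_\o/\A^*)$ (the pair type over the enlarged base) in the ``nice'' slot, with $p_{13}=p_{23}=p'$. The niceness of $tp_\exists(b_{\o+1},b_\o/\A^*)$ then falls out of the exchangeability of an indiscernible sequence: any finite $X\subseteq acl_\sigma(\A^*,b_{\o+1})$ and $Y\subseteq acl_\sigma(\A^*,b_{\o+1},b_\o)$ involve only finitely many $b_i$ with $i<\o$, and a permutation of the sequence fixing those $b_i$'s and $b_{\o+1}$ while sending $b_\o$ to some fresh $b_n$ with $n<\o$ extends to a strong automorphism of $\mathbb{F}$ (members of the sequence have the same strong type over $\A^{alg}$), pushing $Y$ into $acl_\sigma(\A^*,b_{\o+1})$ and $Y\cap acl_\sigma(\A^*,b_\o)$ into $\A^*$. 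One then restricts the amalgamated type back to $\A$.

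Your route via Lemma \ref{laajenee} has a genuine gap, not just bookkeeping. That lemma produces \emph{some} $\A'$ from the data $(a_0,\A)$ alone; you do not get to control where $\A'$ sits relative to the sequence $I$. After forming $\A'$ you would need a pair $b_0',b_1'$ from $I$ with all of: the tail from $b_0'$ on indiscernible and independent over $\A'$, $a_0\da_{\A'}b_0'$, and $tp_\exists(a_0,b_0'/\A)=p$. The last requirement is the sticking point: indiscernibility of $I$ over $\A$ gives $tp_\exists(b_0'/\A)=tp_\exists(b_0/\A)$, but not $tp_\exists(a_0,b_0'/\A)=tp_\exists(a_0,b_0/\A)$, because $I$ need not be indiscernible over $\A a_0$ (independence over a model does not yield indiscernibility in a merely simple setting). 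Trying to repair this by conjugating with an automorphism over $\A a_0$ moves $\A'$, which was tailored to $a_0$; trying to build $\A'$ \emph{after} fixing the pair brings back the problem that $\A'$ may swallow part of the sequence. The paper's choice of which of the three types plays the nice role is precisely what breaks this circularity, and is the key structural observation your proposal is missing.
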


\begin{proof}
Since indiscernible sequences can be extended (see e.g. Lemma 1.5, (ii) in \cite{bl}),
we may without loss of generality assume that $\kappa\ge \omega+1$.
Denote $\A^*=acl_{\sigma}(\A, b_i)_{i<\o}$,
and let $p'(x,y)$ be a free extension of $p$ to $\A^*$.
Now, the type $tp_\exists(b_{\omega+1}, b_{\o}/\A^*)$
is nice;
indeed, if we are given a finite set $S \subseteq \o$,
then there is a permutation of the sequence $(b_i)_{i<\kappa}$
that will fix $\{b_i \, | \, i \in S\} \cup \{b_{\omega+1}\}$
and take $b_{\o} \mapsto b_n$
for some $n<\o$ such that $n>i$ for all $i \in S$,
and this permutation extends to a strong automorphism of the algebraic closure $\mathbb{F}$
of $\M$.
To see that it extends, we note that the sequence $(b_i)_{i<\kappa}$
is indiscernible over $\A$ also in $\mathbb{F}$,
so its members have the same strong type over $\A$ and hence over $\A^{alg}$.
Applying Theorem \ref{ind} (with $p_{12}(x_1,x_2)=tp_\exists(b_{\o+1}, b_\o/\A^*)$ and $p_{13}(x_1,x_3)=p_{23}(x_2,x_3)=p'(y,x)$, i.e. the type which we obtain from $p'(x,y)$ by permuting the two variables)
we  obtain, after a permutation of variables, a type $q'(x,y,z) \in S(\A^*)$ such that $q' \raj (x,y)=p'(x,y)$, $q' \raj(x,z)=p'(x,z)$, and $q' \raj (y,z)=tp_\exists(b_\o,b_{\o+1}/\A^*)$.
Now $q=q' \raj \A$ is as wanted.
\end{proof}

\begin{lemma}\label{yhdistys2}
Let  $\A \in \K$,
and let $(b_i:i<\k)$
be a sequence of possibly infinite tuples that is indiscernible and independent over $\A$.
Let $p(x, y) \in S(\A)$ be an existential type such for some $a \in \M$, it holds that $a \da_\A b_0$ and $(a,b_0) \models p$.
Then, the type $q=\bigcup_{i<\k} p(x, b_i)$ is consistent.
\end{lemma}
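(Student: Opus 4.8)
The plan is to use Lemma \ref{yhdistys1} repeatedly along the sequence $(b_i:i<\kappa)$ to build a single realisation of $q=\bigcup_{i<\kappa}p(x,b_i)$, transfinitely, via a chain of partial realisations together with the ``free/nice'' bookkeeping that Lemma \ref{yhdistys1} provides. First I would fix, using Lemma \ref{yhdistys1}, a type $q(x,y,z)\in S(\A)$ with $q\raj(x,y)=q\raj(x,z)=p$ and $q\raj(y,z)=tp_\exists(b_0,b_1/\A)$. The point of this type is that it tells us how to find, given a realisation $a$ of $p(x,b_i)$ and given the ``next'' indiscernible element $b_{i+1}$, a single element that simultaneously realises $p(x,b_i)$ and $p(x,b_{i+1})$ while keeping everything independent over $\A$; by indiscernibility the same $q$ works at every pair $(b_i,b_{i+1})$, and more generally at every pair $(b_i,b_j)$.

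Concretely, I would build by recursion on $n<\kappa$ (and then at limits) elements $a_n$ and an increasing chain such that $a_n$ realises $p(x,b_i)$ for all $i\le n$ (or all $i$ in an initial segment being handled), with $a_n\da_\A (b_i)_{i}$. The successor step is where Lemma \ref{yhdistys1} is used: having $a_n$ realising $p(x,b_i)$ for $i\le n$, independent from $(b_i)_{i\le n}$ over $\A$, I apply the Independence Theorem (Theorem \ref{ind}) — exactly as packaged in Lemma \ref{yhdistys1}, which already checks the niceness hypothesis via the permutation-of-the-indiscernible-sequence trick — with the three types being $tp_\exists(a_n,(b_i)_{i\le n}/\A)$ glued to $p(x,b_{n+1})$ and to the indiscernible pattern, to produce $a_{n+1}$ realising $p(x,b_i)$ for all $i\le n+1$ and still independent over $\A$ from $(b_i)_{i\le n+1}$. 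Since everything happens inside the monster $\M$ and Galois/existential types are preserved by automorphisms of $\M$, I can realise each newly combined type by an actual element of $\M$, moving things by an automorphism of $\M$ fixing $\A$ so as to preserve the part already constructed. At limit stages $\delta<\kappa$ I need a realisation of $\bigcup_{i<\delta}p(x,b_i)$; here I would invoke finite character / local character of independence (Lemma \ref{nonforkprop}) to see that the union of the types $tp_\exists(a_i,(b_j)_{j}/\A)$ built so far is itself a consistent existential type (each finite fragment is realised, and by finite character of $\da$ there is no obstruction), hence realised in $\M$, giving $a_\delta$; then continue.

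The main obstacle I expect is the limit-stage argument: unlike the first-order setting one cannot simply invoke compactness to conclude that $\bigcup_{i<\delta}p(x,b_i)$ is consistent from the fact that each finite subtype is. The replacement is to carry enough structure along the construction — namely that at each stage $a_i$ is independent over $\A$ from the whole block $(b_j)_j$ processed so far, and that these fit together coherently — so that the limit is handled by the finite character of the independence relation and by homogeneity of $\K$ (Lemma \ref{etyypitmaar}), which lets one realise a consistent existential type by an automorphic image of a finite approximation. A secondary technical point is making sure that when we ``move things around in the monster'' at each successor step we never disturb the already-fixed finite data; this is handled by only ever moving by automorphisms of $\M$ over $\A$ together with the finitely many relevant $b_i$'s, which exist by $\kappa$-model homogeneity of $\M$ for $\kappa$ large enough. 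Once $a=a_{\kappa}$ (or the realisation obtained at the top limit) is produced, it realises $p(x,b_i)$ for every $i<\kappa$, so $q$ is consistent, which is what we wanted.
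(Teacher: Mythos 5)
Your successor step is in the right spirit, and you correctly identify the central obstacle: at limit stages there is no compactness to pass from consistency of finite fragments of $\bigcup_{i<\delta}p(x,b_i)$ to consistency of the whole union. But your proposed fix for the limit step does not work. Finite (or local) character of the independence relation (Lemma \ref{nonforkprop}(i),(ii)) is a statement about when independence can be witnessed over a small base; it does not assert that a type all of whose finite subtypes are realised is itself realised in $\M$. That latter assertion \emph{is} the missing compactness, and asserting ``there is no obstruction'' is circular. The same problem would already appear at stage $\omega$: having built $a_n$ realising $p(x,b_i)$ for $i\le n$, nothing in the machinery you cite produces a single $a_\omega$ realising all of them at once.

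The paper avoids this by never attempting to realise the limit type directly. Instead it works entirely at the level of automorphisms of $\M$ and exploits convergence. Concretely, it first applies Lemma \ref{yhdistys1} repeatedly (pairing $b_i$'s into blocks, then pairing blocks, etc.) to produce for each $i<\omega$ a \emph{consistent} type $p_i$ over $\A$ and an initial segment of the $b_j$'s, with $p_{i+1}$ extending $p_i$ and each $p_i\raj(x,b_j)=p$. Picking realisations $a_i\models p_i$, the coherence $p_{i+1}\raj(x,b_0,\ldots)=p_i$ and homogeneity of $\M$ give automorphisms $f_i\in Aut(\M/\A)$ with $f_i\circ\cdots\circ f_1(a_{i+1})=a_1$, chosen so that each $f_i$ fixes the already-settled images of $b_0,\ldots,b_{2i-1}$. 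Because each $b_j$ is eventually left fixed by all later $f_n$, the composites converge pointwise to a well-defined map $F_\omega$ on $\A\cup\{b_i:i<\omega\}$; this extends to an isomorphism of $acl_\sigma$-closures, hence by homogeneity to $F_\omega^*\in Aut(\M/\A)$, and then $(F_\omega^*)^{-1}(a_1)$ realises $p(x,b_i)$ for all $i<\omega$. General $\kappa$ is then handled by regrouping the sequence into $\omega$-blocks and iterating. So the real work you are missing is this convergence-of-automorphisms device, which is precisely what replaces compactness here; without it, the limit step has a genuine gap.
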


\begin{proof}
Consider first the sequence $(b_i)_{i<\o}$.
Applying Lemma \ref{yhdistys1} to the type $p$, we obtain a type $p_1'(x_1,x_2,x_3) \in S(\A)$ such that $p_1'\raj(x_1,x_2)=p(x_1,x_2)$, $p_1' \raj (x_1,x_3)=p(x_1,x_3)$, and $p_1' \raj(x_2,x_3)=tp_\exists(b_0,b_1/\A)$.
Then, there is  some $a_1 \in \M$ such that $(a_1, b_0, b_1) \models p_1'$ and $a_1 \da_\A b_0 b_1$.
Next, we apply Lemma \ref{yhdistys1} to the type $p_1'$ and the sequence $((b_{2i},b_{2i+1}))_{i<\o}$ (which is independent and indiscernible over $\A$) to obtain a type $p_2'(x_1, x_2,x_3,x_4, x_5) \in S(\A)$ and $a_2 \in \M$ such that 
 $(a_2, b_0, b_1, b_2, b_3) \models p_1'$.

By applying Lemma \ref{yhdistys1} repeatedly this way, we obtain consistent types $p_i \in S(\A, b_0, \ldots, b_{2^i-1})$
for $1 \le i <\o$,
such that $p_i \subseteq p_{i+1}$, 
$p_{i} \raj (\A, b_{j})=p(x,b_j)$ for $0 \le j \le 2^i-1$,
and if $a_i$ realizes $p_i$, then $a_i \da_\A b_0 \cdots b_{2^i-1}$.

For each $1 \le i < \o$, 
let $a_i$ realise  $p_i(x, b_0, \ldots, b_{2i-1})$.
We have $tp_\exists(a_1/b_0,b_1, \A)=tp_\exists(a_2/b_0,b_1, \A)$,
and thus there is some automorphism $f_1 \in Aut(\M/\A, b_0, b_1)$
such that $f_1(a_2)=a_1$.
Since 
$$tp_\exists(a_3/b_0, b_1, b_2, b_3 \A)=tp_\exists(a_2/b_0, b_1, b_2, b_3, \A),$$
therefore also
$$tp_\exists(f_1(a_3), f_1(b_2),f_1(b_3)/\A b_0b_1)=tp_\exists(a_1, f_1(b_2),f_1(b_3)/\A b_0b_1),$$
and we can find an automorphism $f_2 \in Aut(\M/\A, b_0, b_1, f_1(b_2),f_1(b_3))$
such that $f_2(f_1(a_3))=a_1$.
Continuing this way (the next stage fixes $b_0$, $b_1$, $f_1(b_2)$, $f_1(b_3)$, $f_2 \circ f_1(b_4)$, $f_2 \circ f_1(b_5)$,
$f_2 \circ f_1(b_6)$, and $f_2 \circ f_1(b_7)$, and sends $f_2 \circ f_1(a_4)$ to $a_1$)
we construct a sequence of functions $f_i$, $1 \le i <\o$,
satisfying:
\begin{itemize}
\item $f_i \circ f_{i-1} \circ \cdots  \circ f_1(a_{i+1})=a_1$;
\item $f_i \circ f_{i-1} \circ \cdots  \circ f_1(b_j)=f_{i-1} \circ \cdots  \circ f_1(b_j)$, when $j<2^i$.
\end{itemize}

Define now a map $F_\omega: \A \cup (b_i)_{i<\o} \to \M$ by setting $F_\omega \raj \A =id$, $F_\omega(b_0)=b_0$,
and for each $n$ such that $0<n<\omega$,  
$$F_\omega(b_n)=f_i \circ f_{i-1} \circ \cdots \circ f_1(b_n),$$
where $i$ is the smallest number such that $n<2^{i}$.
Note that then $f_i(F_\omega(b_n))=F_\omega(b_n)$ whenever $n<2^{i-1}$.  
By Corollary \ref{egalois}, $F_\omega$ extends to an isomorphism from $cl_\sigma(\A, b_i)_{i<\o}$
to $cl_\sigma(\A, F_\o(b_i))_{i<\o}$,
and thus to some $F_\o^* \in Aut(\M, \A)$.

Let $c=(F_\o^*)^{-1}(a_1)$.
Now, for each $n<\omega$, we have 
$$tp_\exists(c, b_0, \ldots, b_n/\A)=tp_\exists(a_1, F_\omega(b_0), \ldots, F_\omega(b_n)/\A )=tp_\exists(a_{n+1}, b_0, \ldots, b_n/\A),$$
where the last equality follows from the fact that if $n>1$ and $i\le n$,
then $f_n \circ f_{n-1} \circ \cdots  \circ f_1(a_{n+1})=a_1$
and $f_{n} \circ f_{n-1} \circ \cdots  \circ f_1(b_i)=F_\omega(b_i)$
(note that the equality also holds if $n\in \{0,1\}$, since then $F_\omega(b_n)=b_n$). 
Thus $c$ realises $q=\bigcup_{i<\o} p(x, b_i)$ and $c \da_\A (b_i)_{i<\o}$.

Let $(b_i')_{i<\k}$
be a sequence where $b_0'=(b_i)_{i<\o}$,
$b_1'=(b_i)_{\o \le i <2\o}$, and so on.
By homogeneity, it is indiscernible over $\A$.
Moreover, it is independent over $\A$, and thus we can apply the same process as above.
Continuing this way, we eventually get that $q=\bigcup_{i<\k} p(x, b_i)$ is consistent.
\end{proof}

We are  now ready to prove that our independence notion coincides with freeness (see Definition \ref{free}),
and it will then follow that $\K$ is $\aleph_0$-simple.
The proof is as in \cite{KP} (Claims I and II of Theorem 4.2), and we provide it for the sake of exposition. 

\begin{lemma}\label{l3}
Let $a \in \M$ be a finite tuple, and $A, B \subseteq \M$.
Then, $a \da_A B$ if and only if $a$ is free from $B$ over $A$.
\end{lemma}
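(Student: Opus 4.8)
The goal is to show that the field-theoretic independence relation $\da$ of Definition \ref{independence} agrees, for finite tuples, with the freeness relation of Definition \ref{free}. I would argue the two implications separately, following the template of Claims I and II in the proof of Theorem 4.2 of \cite{KP}, but replacing every appeal to Compactness by the tools assembled above: the Erd\H{o}s--Rado style stretching of indiscernible sequences from Lemma \ref{l2}, and the type-amalgamation/consistency results Lemmas \ref{yhdistys1} and \ref{yhdistys2}. Throughout I may assume without loss that $A=acl_\sigma(A)$ (and, where convenient, enlarge $A$ to a model by Remark \ref{sulkeumaremark}), since $acl_\sigma(A)\in\K$ and $\da$ depends only on $acl_\sigma$-closures.

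\emph{Dividing implies dependence (the easy direction).} Suppose $a\nda_A B$. By finite and local character (Lemma \ref{nonforkprop}(i),(ii)) there is a finite tuple $b\in B$ with $a\nda_A b$. I want to show $tp_\exists(a/b)$ divides over $A$. Work in the pure field monster $\mathbb{F}=\M^{alg}$: $a\nda_A b$ means $acl_\sigma(Aa)$ and $acl_\sigma(Ab)$ are not algebraically independent over $A$, so some element of $acl_\sigma(Aa)$ is algebraic over $acl_\sigma(Ab)$ but not over $A$; this is witnessed by a difference polynomial relation between (a finite part of) $a$ and $b$ that has only finitely many solutions over $acl_\sigma(Ab)$ but infinitely many over $A$. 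Take a sequence $(b_i)_{i<\omega}$ of realisations of $tp_\exists(b/A)$ that is $A$-indiscernible \emph{and} $\da$-independent over $A$ (extension, Lemma \ref{nonforkprop}(iii), together with Lemma \ref{etyypitmaar} to get an indiscernible sequence by homogeneity). Each $b_i$ forces the finite solution set from $tp_\exists(a/b)$ into $acl_\sigma(Ab_i)$, and independence of the $b_i$ over $A$ makes the demands incompatible, so $\bigcup_{i<\omega}p(x,b_i)$ is inconsistent where $p(x,b)\supseteq tp_\exists(a/b)$ is the relevant part; hence $p$ divides over $A$. This gives the contrapositive of ``free $\Rightarrow$ independent''.

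\emph{Independence implies freeness (the main direction).} Suppose $a\da_A B$; let $b\in B\cup A$ be a finite tuple and let $p(x,b)=tp_\exists(a/b)$. I must show $p$ does not divide over $A$. Let $(b_i)_{i<\kappa}$ be any $A$-indiscernible sequence with $tp_\exists(b_0/A)=tp_\exists(b/A)$; I need $\bigcup_{i<\kappa}p(x,b_i)$ consistent. First apply Lemma \ref{l2} to get an increasing continuous chain of models $\A_i\ni A\cup(b_j:j<i)$ with $(b_j:i\le j\le\kappa)$ indiscernible over $\A_i$; then a routine induction using $a\da_A b$ and extension lets me assume $(b_i)_{i<\kappa}$ is in addition $\da$-independent over $A$ and that $a\da_A b_0$ after moving $a$ by an automorphism fixing $b=b_0$ and $A$. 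Now I am exactly in the hypothesis of Lemma \ref{yhdistys2}, which delivers a realisation of $\bigcup_{i<\kappa}p(x,b_i)$: so $p$ does not divide over $A$, and $a$ is free from $B$ over $A$. Combining the two directions gives $a\da_A B \iff a$ is free from $B$ over $A$.

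\emph{Main obstacle.} The delicate point is arranging, in the second direction, that the given $A$-indiscernible sequence can be \emph{replaced} by one that is simultaneously indiscernible \emph{and} $\da$-independent over $A$ while still witnessing dividing of the \emph{same} type, and that the niceness hypothesis needed to invoke Lemma \ref{yhdistys1} (hence Lemma \ref{yhdistys2}) is met. The niceness issue is handled precisely as in the proof of Lemma \ref{yhdistys1}: after passing to $\A^*=acl_\sigma(\A,(b_i)_{i<\omega})$ a suitable permutation of the indiscernible sequence, fixing a prescribed finite part and pushing $b_\omega$ far out, extends to a strong automorphism of $\mathbb{F}$ because indiscernibles over $\A$ remain indiscernible (hence share a strong type) over $\A^{alg}$. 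The independence-plus-indiscernibility replacement is where Lemma \ref{l2} does the heavy lifting, exactly substituting for the Ramsey+Compactness step of \cite{KP}; once that is in place the rest is bookkeeping. Having established Lemma \ref{l3}, $\aleph_0$-simplicity of $\M$ in the sense of \cite{bl} follows immediately: the two clauses of Definition \ref{blsimple} translate, via Lemma \ref{l3}, into finite character and the extension property of $\da$, which are Lemma \ref{nonforkprop}(i),(iii) (the ``infinitely many realisations'' hypothesis matching Lemma \ref{modalg}, which identifies bounded with finite).
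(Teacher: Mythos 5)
Your high-level plan matches the paper's exactly---the same two directions, the same appeal to Lemma \ref{l2} as the Erd\H os--Rado substitute for Compactness and to Lemma \ref{yhdistys2} for consistency---but in each direction the step that actually does the work is missing or misstated.

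In the direction ``$a\nda_A B\Rightarrow$ not free,'' you need to \emph{prove} that $\bigcup_{i<\omega}p(x,b_i)$ is inconsistent, and the field-theoretic heuristic you offer does not do this. The claim that $a\nda_A b$ forces a single element of $acl_\sigma(Aa)$ to become algebraic over $acl_\sigma(Ab)$ but not over $A$ is not accurate (the witnessing algebraic dependence involves a tuple from $acl_\sigma(Aa)$, possibly with other coordinates of that tuple as parameters), and ``independence of the $b_i$ makes the demands incompatible'' is an assertion, not an argument. The paper avoids the field-theoretic route entirely and argues purely from Lemma \ref{nonforkprop}: a realiser $a'$ of $\bigcup_i p(x,b_i)$ would satisfy $a'\nda_A b_i$ for every $i$; since the $b_i$ are $\da$-independent over $A$, symmetry and transitivity upgrade this to $a'\nda_{A\cup\{b_j:j<i\}}b_i$ for every $i$; and this contradicts finite character, which produces a finite $A_0\subseteq A\cup\{b_i:i<\omega\}$, hence some $i$ with $A_0\subseteq A\cup\{b_j:j<i\}$, over which $a'$ is already independent from everything. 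You should adopt this argument (and cite Lemma \ref{modalg} to justify that $tp_\exists(b/A)$ has unboundedly many realisations, which is needed before the Erd\H os--Rado construction can even start).

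In the direction ``$a\da_A B\Rightarrow$ free,'' the claim that ``a routine induction $\ldots$ lets me assume $(b_i)_{i<\kappa}$ is $\da$-independent over $A$'' is, taken literally, not available: dividing quantifies over all $A$-indiscernible sequences, so you cannot replace the given one, and the given one need not be $\da$-independent over $A$. What the paper does is apply local character to $b_\kappa$ over $\bigcup_j\A_j$ to find a single index $i$ with $b_\kappa\da_{\A_i}\bigcup_j\A_j$; by indiscernibility of the tail over $\A_i$, the tail $(b_j:i\le j\le\kappa)$ of the \emph{given} sequence is then simultaneously indiscernible and $\da$-independent over the \emph{model} $\A_i\in\K$, which is precisely what Lemma \ref{yhdistys2} requires (the lemma needs a model in $\K$, not a general set). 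After relabelling the tail, one passes to a free extension of a conjugate of $p$ over $\A_ib_0$; its realiser $a'$ satisfies $a'\da_{\A_i}b_0$ by transitivity and monotonicity. Your phrase ``moving $a$ by an automorphism fixing $b=b_0$'' conflates the original $b_0$ with the relabelled one and should be replaced by this free-extension step.
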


\begin{proof}
We prove first the direction from right to left.
If $a \nda_A B$,
then, by Lemma \ref{nonforkprop} (ii), there is some finite tuple $b \in B$
such that $a \nda_A b$.
Now, $b \notin acl_\sigma(A)$,
so $p=tp_\exists(b/A)$ has unboundedly many realisations by Lemma \ref{modalg}.  
Just like in the proof of lemma \ref{l2},
we can apply Erd\"os-Rado to obtain a sequence $(b_i)_{i<\o}$
of realisations of $p$ that are independent and indiscernible over $A$ (see also e.g.Theorem 8.18 and Appendix A in \cite{baldwin})
Next, 
show that if $q(x,b)=tp_\exists(a/bA)$,
then $r=\bigcup_{i<\o}q(x, b_i)$ is inconsistent, and thus $q$ divides over $A$.
If $r$
was consistent and realised by some $a' \in \M$,
then $a' \nda_A b_i$ for all $i<\o$,
which implies $a' \nda_{A \cup \{b_j \, | \, j<i\}} b_i$ for all $i<\o$ 
(otherwise symmetry and transitivity would give $b_i \da_A a'$).
However, by the local character of the independence relation (Lemma \ref{nonforkprop}, (ii)),
there is some finite $A_0 \subseteq A$ and some $i<\omega$ such that
$a' \da_{A_0 \cup \{b_j \, | \, j<i\}} A \cup \{b_i \, | \, i<\o\}$,
and thus by monotonicity, $a' \da_{A \cup \{b_j \, | \, j<i\}} b_i$, a contradiction.
Hence, $a$ is not free from $B$ over $A$.

For the other direction, we may without loss of generality assume $A \subseteq B$.
Suppose $a \da_A B$, and let $b \in B$.
We need to show that $p=tp_\exists(a/Ab)$ does not divide over $A$.
Let $\k$ be an infinite cardinal,
and let $(b_i : i\le \k)$ be an indiscernible sequence over $A$,
with $b_0=b$.
By Lemma \ref{l2},
there is an increasing, continuous sequence of models $\A_i \in \K$ for $i<\k$
such that 
\begin{enumerate}[(i)]
\item $\A_i$ contains $A \cup (b_j:j<i)$;
\item $(b_j: i \le j \le \k)$ is indiscernible over $\A_i$.
\end{enumerate}
Denote $\B=\bigcup_{i<\k} \A_i$.
By local character (Lemma \ref{nonforkprop}, (ii)),
there is some finite set $A_0 \subseteq \B$
such that $b_\k \da_{A_0}  \B$.
Since $A_0 \subseteq \A_i$ for some $i<\k$,
it follows from monotonicity (Lemma \ref{nonforkprop}, (iv) )
that  $b_\k \da_{\A_i} \B$,
and therefore, since the sequence $(b_j: i \le j \le \k)$ 
is indiscernible over $\A_i$,
it is also independent over $\A_i$.

Denote $\A=\A_i$.
After relabeling, 
we have a sequence $(b_i)_{i<\k}$ 
that is independent and indiscernible over $\A$.
The type $p(x, b_0) \in S(Ab_0)$ has a free extension $p'(x,b_0)$ to $\A b_0$ by Lemma \ref{nonforkprop}, (iii). 
Let $a'$ be a realisation of $p'$.
Since $a' \da_A b_0$ and $a' \da_{Ab_0} \A$,
we have by transitivity and monotonicity  (Lemma \ref{nonforkprop}, (v) and (iv))
that $a' \da_\A b_0$.  
Now, write $p'$ as $p''(x,b_0)$ (i.e. $p''(x,y) \in S(\A)$).
By Lemma \ref{yhdistys2}, the type $\bigcup_{i<\k} p''(x,b_i)$ is consistent.
\end{proof}

\begin{corollary}\label{SIMPLE}
Let $\K$ be an FCA-class, and let $\M$ be a monster model for $\K$.
Then $\M$ is $\aleph_0$-simple (in the sense of Definition \ref{blsimple}). 
\end{corollary}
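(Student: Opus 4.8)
The plan is to deduce the corollary directly from Lemma \ref{l3} together with the non-forking properties collected in Lemma \ref{nonforkprop}. Lemma \ref{l3} tells us that for a finite tuple $a$ and arbitrary $A, B \subseteq \M$, the relation ``$a$ is free from $B$ over $A$'' (in the sense of Definition \ref{free}) coincides with $a \da_A B$. So it suffices to check that $\da$ satisfies the two bullet points of Definition \ref{blsimple}. The first bullet -- local character of freeness for finite tuples -- is immediate from Lemma \ref{nonforkprop}(i) (finite character): given a finite tuple $a$ and a set $A$, there is a finite $A_0 \subseteq A$ with $a \da_{A_0} A$, and by Lemma \ref{l3} this says exactly that $a$ is free from $A$ over $A_0$.

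For the second bullet, suppose $a \in \M$, $A \subseteq \M$, $tp_\exists(a/A)$ has unboundedly many realisations, $B \subseteq A$ is such that $a$ is free from $A$ over $B$ (equivalently, $a \da_B A$ by Lemma \ref{l3}), and $C \supseteq B$ is given. We must produce $c$ realising $tp_\exists(a/A)$ with $c$ free from $C$ over $B$. First apply the extension property, Lemma \ref{nonforkprop}(iii): there is $c' \in \M$ with $tp_\exists(c'/A) = tp_\exists(a/A)$ and $c' \da_A C$. Since $a \da_B A$ and $tp_\exists(c'/A) = tp_\exists(a/A)$, and $\da$ depends only on $acl_\sigma$ of the sets involved, we also have $c' \da_B A$; combining $c' \da_B A$ with $c' \da_A C$ via transitivity (Lemma \ref{nonforkprop}(v)) and monotonicity (Lemma \ref{nonforkprop}(iv)) gives $c' \da_B C$. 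By Lemma \ref{l3} applied to the finite subtuples of $c'$, this says $c'$ is free from $C$ over $B$, so $c = c'$ works. (One small point: Definition \ref{free} is phrased via finite tuples of $a$, but both $\da$ and freeness are tested on finite subtuples, so the argument goes through regardless of whether $a$ is finite or infinite.)

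The only genuine work has already been done: Lemma \ref{l3} is the heart of the matter, and its proof in turn rests on Lemmas \ref{l2}, \ref{yhdistys1}, and \ref{yhdistys2}, where the Erd\H{o}s--Rado theorem replaces compactness and Theorem \ref{ind} (the independence theorem, with the niceness condition supplied by the indiscernibility of the relevant sequence) replaces the final compactness argument of \cite{KP}. Granting those, the present corollary is a direct translation: freeness equals $\da$, and $\da$ has local character and the free extension property by Lemma \ref{nonforkprop}. I expect no obstacle here beyond making sure the extension-and-transitivity bookkeeping in the second bullet is arranged so that the base stays $B$ throughout; everything else is routine once Lemma \ref{l3} is in hand.
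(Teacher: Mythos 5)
Your proof is correct and follows exactly the route the paper intends: the paper's own proof is the one-liner ``Follows directly from Lemma~\ref{l3} and Lemma~\ref{nonforkprop},'' and you have simply unfolded what ``directly'' means, checking the two clauses of Definition~\ref{blsimple} against finite character, extension, transitivity, monotonicity, and invariance. The only cosmetic slip is in the free-extension step, where you should apply extension over $A$ to $A\cup C$ (so that the standing hypothesis $A\subseteq A\cup C$ of Lemma~\ref{nonforkprop} holds and transitivity applies along $B\subseteq A\subseteq A\cup C$), but since $c'\da_A C$ is equivalent to $c'\da_A AC$ by definition of $\da$, the argument as you give it goes through.
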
 

\begin{proof}
Follows directly from Lemma \ref{l3} and Lemma \ref{nonforkprop}.
\end{proof}

\vspace{0.5cm}

\noindent
\textbf{Open questions}

\begin{enumerate}[1.]
\item Is it possible to get rid of the niceness condition in Theorem \ref{ind} and prove the full independence theorem over models?
\item Is it possible to improve Lemma \ref{laajenee} to find a model $\A'$ which would work for all $a,b \in \M$ which are independent over $\A'$?
\end{enumerate}

\end{document}